\newcommand{\unito}{{\mathds 1}}
\DeclareMathOperator\curry  {\Lambda}
\DeclareMathOperator\uncurry{\Lambda^{-}}
\DeclareMathOperator\ev{ev}
\DeclareMathOperator\id{id}
\DeclareMathOperator\Id{Id}
\DeclareMathOperator\C{\mathbf{C}}
\DeclareMathOperator\R{\mathbb{R}}
\providecommand{\url}[1]{{\tt #1}}
\newtheorem{lemma}[subsubsection]{Lemma}
\newtheorem{theorem}[subsubsection]{Theorem}
\newtheorem{corollary}[subsubsection]{Corollary}
\newtheorem{proposition}[subsubsection]{Proposition}
\theoremstyle{definition}
\newtheorem{remark}[subsubsection]{Remark}
\newtheorem{example}[subsubsection]{Example}
\newtheorem{definition}[subsubsection]{Definition}
\numberwithin{equation}{subsection}
\newcommand{\lemref}[1]{Lemma~\ref{#1}}
\newcommand{\corref}[1]{Corollary~\ref{#1}}
\newcommand{\proref}[1]{Proposition~\ref{#1}}
\newcommand{\remref}[1]{Remark~\ref{#1}}
\title{Tangent bundles in differential $\lambda$-categories}
\author{Oleksandr Manzyuk}
\begin{document}
\maketitle

\begin{abstract}
  Differential $\lambda$-categories were introduced by \citet{1857313}
  as models for the simply typed version of the differential
  $\lambda$-calculus of \citet{MR2016523}.  A differential
  $\lambda$-category is a cartesian closed differential category of
  \citet{MR2591951} in which the differential operator is compatible
  with the closed structure.  We prove that any differential
  $\lambda$-category is equipped with a canonical strong commutative
  monad whose construction resembles that of the tangent bundle in the
  category of smooth manifolds.  Most of the results of this note
  remain valid in an arbitrary cartesian differential category.  Our
  emphasis on differential $\lambda$-categories is motivated by the
  anti\-cipated application of the theory developed in this note to
  the design and semantics of a $\lambda$-calculus extended by the
  pushforward operator.
\end{abstract}

\section{Introduction}

Differential $\lambda$-categories were introduced by \citet{1857313}
as models for the simply typed version of the
differential $\lambda$-calculus of
\citet{MR2016523}.  The notion of differential $\lambda$-category is a
refinement of the notion of cartesian differential category introduced
by \citet{MR2591951}.  \citeauthor{MR2016523} drew the
motivation for extending the $\lambda$-calculus with differential
operators from linear logic.  However, the differential
$\lambda$-calculus is also an attractive foundation on which to build
a functional programming language with built-in support for
differentiation.  Unlike, for example, symbolic differentiation, the
differential $\lambda$-calculus can handle not only mathematical
expressions, but arbitrary $\lambda$-terms.  Most notably, it can
take derivatives through and of higher-order functions.  Like symbolic
differentiation, the differential $\lambda$-calculus, implemented
naively, yields a grossly inefficient way to compute derivatives,
suffering from the loss of sharing.
The purpose of this note is to extend the semantic theory of
differential $\lambda$-categories so as to be able to build on top
of it a variation on the differential $\lambda$-calculus which
would not necessitate this loss of efficiency.

Automatic differentiation (AD) is a technique for efficiently
computing derivatives.  There are several variations of AD.  The
easiest to explain is so called \emph{forward mode} AD, which is based
on the following ideas from differential geometry.  For a smooth
manifold $X$, denote by $TX$ its tangent bundle.  For example, the
tangent bundle of $\R^n$ can be identified with $\R^n\times \R^n$, the
space of pairs $(x', x)$ consisting of a point $x\in\R^n$ and a
tangent vector $x'\in\R^n$ at that point.  For a smooth map between
smooth manifolds $f : X\to Y$, denote by $Tf : TX\to TY$ the
pushforward of $f$.  For a smooth map $f: \R^m\to\R^n$, the
pushforward $Tf$ is given by $Tf (x', x) = (J_f(x)\cdot x', f(x))$,
where $J_f(x)$ is the Jacobian of $f$ at the point $x$.  The
correspondences $X\mapsto TX$, $f\mapsto Tf$ constitute a functor from
the category of smooth manifolds to itself; the functoriality of $T$
reduces to the chain rule for derivatives.  In particular, if $f$ is
the composition of $f_1$, $f_2$, \dots, $f_k$, then $Tf$ is the
composition of $Tf_1$, $Tf_2$, \dots, $Tf_k$.  Furthermore, the
functor $T$ preserves products.  Therefore, in order to compute the
pushforward of a compound function it suffices to know the
pushforwards of its constituents, which is what various
implementations of forward mode AD take advantage of.  One remarkable
property of forward mode AD is the following complexity guarantee:
evaluation of the pushforward takes no more than a constant factor
times as many operations as evaluation of the function.

Although AD of first-order programs is well understood (e.g., see the
textbook by \citet{MR1753583}), surprisingly little is known about AD
in the presence of first-class functions.  Handling of higher-order
functions becomes a delicate issue.  \citet{1466794} discuss some
problems arising when one tries to extend a functional language with
AD operators.  They describe \citep{Siskind2008UPU} a novel AD system,
\textsc{Stalin}$\nabla$, and claim that it correctly handles
higher-order functions; unfortunately, no proof of that claim is
given, and, in fact, no formal theory supporting it is developed.  We
hope that bridging a gap between the differential $\lambda$-calculus
and forward mode AD will shed some light on these problems.  By doing
so, we expect to lay down a solid theoretical foundation for an
efficient implementation of a functional programming language with
built-in support for differentiation.  Ultimately, we hope to design a
$\lambda$-calculus that is similar to the differential
$\lambda$-calculus but is built around the idea of pushforward instead
of derivative.

In this note we introduce an analogue of the tangent bundle functor in
any differential $\lambda$-category.  Differential
$\lambda$-categories are models for the simply typed version of the
differential $\lambda$-calculus, and we hope that the pushforward
construction can be captured as a syntactic operation in a
$\lambda$-calculus.  We study the properties of the tangent bundle
functor and show that it is part of a strong commutative monad.  This
is an encouraging result because it establishes a link with
computational $\lambda$-calculi of \citet{MR1115262}.

The tangent bundle functor on a cartesian differential category is
also considered by \citet{CockettCruttwell}.  They show that it is an
example of abstract ``tangent structure'', which is an axiomatization
of differential structure at the level of smooth manifolds.
\citeauthor{CockettCruttwell} prove that any tangent structure has the
structure of a monad, thus partly replicating the results of this
note.

For readers familiar with synthetic differential geometry
\citep{MR2244115}, it should not come as a surprise that the tangent
bundle functor is part of a strong commutative monad.  Indeed,
synthetic differential geometry is developed relative to a topos that
is assumed to contain an object $D$ of ``infinitesimals''.  The
tangent bundle of a space $X$ is then defined as the exponential
$X^D$, and it is a general fact having nothing to do with
differentiation that the functor $(-)^D$ can be equipped with the
structure of a strong commutative monad.  For example, the
multiplication is defined as the composite
\[
(X^D)^D \simeq X^{D\times D} \xrightarrow{X^\Delta}X^D,
\]
where $\Delta: D\to D\times D$ is the diagonal morphism.  However, we
still think that the results presented in this note are of some
interest, as they operate at a more basic level.  Topoi enjoy many
powerful properties.  In contrast, the notion of tangent bundle and
its associated algebraic structures make sense in any cartesian
differential category, which is a minimal setup in which differential
calculus and other notions reminiscent of it can be described.
Differentiation appears in different guises in both combinatorics and
computer science.  For example, \citet{1857313} provide two examples
of differential $\lambda$-categories of combinatorial rather than
analytic nature.  Consequently, the results of this note apply to
these categories.  On the other hand, it is not clear if these
categories can be embedded into larger categories in such a way that
the tangent bundle functor becomes representable, so that the ideas
from synthetic differential geometry can be applied.

It is rather amusing and instructive to see how the algebraic
structures existing on the tangent bundle functor can be derived
directly from the general properties of differentiation.  That is why
we have chosen to present the proofs with full details.  Although we
hope that the results of this note will be of independent interest to
category theorists, our primary motivation for developing the theory
of tangent bundles in differential $\lambda$-categories is the desire
to formulate forward mode AD in the form of a $\lambda$-calculus
extended by AD operators.  We consider this in a sequel to this note.

\paragraph{Acknowledgments.}  I would like to thank Alexey Radul for
lots of fruitful discussions, as well as for carefully reading this
manuscript and making suggestions that have improved the exposition.

\section{Preliminaries}

We begin by summarizing the key concepts.  We follow the notation of
\citet{1857313}.

\subsection{Cartesian categories}

Let $\C$ be a cartesian category and $X$, $Y$, $Z$ arbitrary objects
of $\C$.  We denote by $X\times Y$ the product of $X$ and $Y$ and by
$\pi_1 : X\times Y\to X$, $\pi_2 : X\times Y\to Y$ the projections.
The terminal object is denoted by $\unito$, and for any object $X$, we
denote by $!_X$ the unique morphism from $X$ to $\unito$.  For a pair
of morphisms $f : Z\to X$ and $g : Z\to Y$, denote by $\< f, g\> :
Z\to X\times Y$ the pairing of $f$ and $g$, i.e., the unique morphism
such that
\begin{equation}
  \label{eq:proj}
  \pi_1\circ\< f, g\> = f
  \quad \textup{and} \quad
  \pi_2\circ\< f, g\> = g.
\end{equation}
The following equations follow immediately from the universal property
of pairing:
\begin{align}
\<\pi_1, \pi_2\> &= \id,
\label{eq:id-pair}
\\
\<f, g\> \circ h &= \<f \circ h, g \circ h\>.
\label{eq:circ-pair}
\end{align}
For a pair of morphisms $f: X\to Y$ and $g: U\to V$, denote by
$f\times g : X\times U\to Y\times V$ the product of $f$ and $g$, i.e.,
the unique morphism such that
\begin{equation}
  \label{eq:times-def}
  \pi_1\circ (f\times g) = f\circ\pi_1
  \quad \textup{and} \quad
  \pi_2\circ (f\times g) = g\circ\pi_2.
\end{equation}
Comparing these equations with \eqref{eq:proj}, we conclude that
\begin{equation}
  \label{eq:times-pair}
  f\times g = \<f\circ\pi_1, g\circ\pi_2\>.
\end{equation}
Equations \eqref{eq:proj}, \eqref{eq:circ-pair}, and
\eqref{eq:times-pair} imply that for any morphisms $f: X\to U$, $g:
Y\to V$, $h: Z\to X$, and $k: Z\to Y$ holds
\begin{equation}
  \label{eq:times-circ-pair}
  (f\times g) \circ \<h, k\> = \<f\circ h, g\circ k\>.
\end{equation}
Any cartesian category is a symmetric monoidal category with the
tensor product and unit object given by $\times$ and $\unito$,
respectively.  The associativity constraint $a_{X,Y,Z}: (X\times
Y)\times Z\to X\times (Y\times Z)$ is given by $a_{X,Y,Z} =
\<\pi_1\circ\pi_1, \<\pi_2\circ\pi_1, \pi_2\>\>$.  The left and right
unit constraints are given by $\ell_X=\<!_X,\id_X\>: X\to\unito\times
X$ and $r_X=\<\id_X, !_X\>:X\to X\times\unito$, respectively.  The
symmetry $c_{X,Y}:X\times Y\to Y\times X$ is given by
$c_{X,Y}=\<\pi_2,\pi_1\>$.  The following equations are
straightforward:
\begin{align}
  c\circ \<f, g\> &= \<g, f\>\label{eq:sym-pair}\\
  c\circ (h\times k) &= (k\times h)\circ c.\label{eq:sym-times}
\end{align}
Out of the associativity and commutativity isomorphisms one can
construct the distributivity isomorphism $\sigma: (A\times B)\times
(C\times D)\to (A\times C)\times (B\times D)$.  Explicitly, it is
given by $\sigma=\<\<\pi_1\circ\pi_1, \pi_1\circ\pi_2\>,\linebreak[3]
\<\pi_2\circ\pi_1, \pi_2\circ\pi_2\>\>$.  Using
equations~\eqref{eq:proj}--\eqref{eq:times-circ-pair} one can easily
prove the following equations:
\begin{align}
  \label{eq:sigma-pair}
  \sigma \circ \<\<f, g\>, \<h,k\>\> = \<\<f, h\>, \<g, k\>\>,
  \\
  \label{eq:sigma-times}
  \sigma \circ (\<f,g\>\times \<h, k\>) = \<f\times h, g\times k\>.
\end{align}
We always define morphisms in a cartesian category rigorously, using
the combinators $\<-,-\>$ and $\times$, giving the preference to the
former.  The proofs of the equations involving the morphisms so
defined rely on the properties of the combinators $\<-,-\>$ and
$\times$ stated above.  Sometimes, however, this approach can lead to
rather obscure definitions.  We then also write, for illustration
purposes, the morphism being defined using set-theoretic notation,
pretending that our underlying category is a category of sets with
some structure.  For example, set-theoretically, the distributivity
isomorphism $\sigma$ is given by $\sigma((a, b), (c, d)) = ((a, c),
(b, d))$.

\subsection{Cartesian closed categories}

A cartesian category $\C$ is called \emph{closed} if for any pair of
objects $X$ and $Y$ of $\C$ there exist an object $X\Rightarrow Y$,
called the \emph{exponential object}, and a morphism $\ev_{X,Y}:
(X\Rightarrow Y)\times X\to Y$, called the \emph{evaluation morphism},
satisfying the following universal property: the map $\uncurry : \C(Z,
X\Rightarrow Y) \to \C(Z\times X, Y)$ given by $\uncurry(g) =
\ev_{X,Y}\circ (g\times \id_X)$ is bijective.  Let $\curry :
\C(Z\times X, Y) \to \C(Z, X\Rightarrow Y)$ denote the inverse of
$\uncurry$; i.e., for a morphism $f : Z\times X\to Y$, $\curry(f) : Z
\to X\Rightarrow Y$ is the unique morphism such that $\ev_{X, Y}\circ
(\curry(f)\times \id_X) = f$.  The morphism $\curry(f)$ is called the
\emph{currying} of $f$.  We shall frequently use the equation
\begin{align}
  \curry(f)\circ g & = \curry(f\circ(g\times\id)),
  \label{eq:curry}
\end{align}
which follows immediately from the definition of $\curry$.

\subsection{Cartesian differential categories}

The notion of cartesian differential category was introduced by
\citet{MR2591951} as an axiomatization of differentiable maps as well
as a unifying framework in which to study different notions
reminiscent of the differential calculus.

\begin{definition}[{\citep[Definition~1.1.1]{MR2591951}}]
  A category $\C$ is \emph{left-additive} if each homset is equipped
  with the structure of a commutative monoid $(\C(X, Y), +, 0)$ such
  that $(g + h) \circ f = (g\circ f) + (h\circ f)$ and $0\circ f = 0$.
  A morphism $f$ in $\C$ is \emph{additive} if it satisfies $f\circ (g
  + h) = (f\circ g) + (f\circ h)$ and $f\circ 0 = 0$.
\end{definition}

\begin{definition}[{\citep[Definition~1.2.1]{MR2591951}}]
  A category is \emph{cartesian left-additive} if it is a
  left-additive category with products such that all projections and
  pairings of additive morphisms are additive.
\end{definition}

\begin{remark}
  If $\C$ is a cartesian left-additive category, then the pairing
  \[
  \<-,-\> : \C(Z, X)\times \C(Z, Y)\to \C(Z, X\times Y)
  \]
  is additive; in other words, it satisfies $\<f+g, h+k\> = \<f, h\> +
  \<g, k\>$ and $\<0, 0\>=0$.  For example, the equation $\<f+g, h+k\>
  = \<f, h\> + \<g, k\>$ follows from the equations
  \begin{alignat*}{3}
    \pi_1\circ (\<f, h\> + \<g, k\>) & = \pi_1\circ\<f, h\> +
    \pi_1\circ\<g, k\>
    & \qquad & \textup{by the additivity of $\pi_1$}\\
    & = f + g
    & \qquad & \textup{by \eqref{eq:proj}},\\
    \pi_2\circ (\<f, h\> + \<g, k\>) & = \pi_2\circ\<f, h\> +
    \pi_2\circ\<g, k\>
    & \qquad & \textup{by the additivity of $\pi_2$}\\
    & = h + k & \qquad & \textup{by \eqref{eq:proj}},
  \end{alignat*}
  and from the universal property of pairing.  The proof of the
  equation $\<0, 0\>=0$ is similar.  Also, note that in a cartesian
  left-additive category $0: X\to\unito$ is necessarily equal to
  $!_X$.
\end{remark}

\begin{definition}[{\citep[Section~1.4, Definition~2.1.1]{MR2591951}},
                   {\citep[Definition~4.2]{1857313}}]
  A cartesian closed category is \emph{cartesian closed left-additive}
  if it is a cartesian left-additive category such that each currying
  map $\curry: \C(Z\times X,Y)\to\C(Z,X\Rightarrow Y)$ is additive:
  $\curry(f + g) = \curry(f) + \curry(g)$ and $\curry(0) = 0$.  A
  \emph{cartesian (closed) differential category} is a cartesian
  (closed) left-additive category equipped with an operator $D : \C(X,
  Y) \to \C(X\times X, Y)$ satisfying the following axioms:
  \begin{itemize}
  \item[D1.] $D(f + g) = D(f) + D(g)$ and $D(0) = 0$.
  \item[D2.] $D(f) \circ \< h+k, v\> = D(f) \circ \< h, v\> + D(f)
    \circ \< k, v\>$ and $D(f) \circ \< 0, v\> = 0$.
  \item[D3.] $D(\id) = \pi_1$, $D(\pi_1) = \pi_1 \circ \pi_1$,
    $D(\pi_2) = \pi_2 \circ \pi_1$.
  \item[D4.] $D(\< f, g\>) = \< D(f), D(g)\>$.
  \item[D5.] $D(f\circ g) = D(f) \circ \< D(g), g \circ \pi_2 \>$.
  \item[D6.] $D(D(f)) \circ \<\< g, 0\>, \<h, k\>\> = D(f) \circ \< g,
    k\>$.
  \item[D7.] $D(D(f))\circ \<\<0, h\>, \<g, k\>\> = D(D(f))\circ
    \<\<0, g\>, \<h,k\>\>$.
  \end{itemize}
\end{definition}

Following \citet{MR2591951}, we suggest that the reader keep in mind
one key simple example of a cartesian differential category while
reading this note: the category of smooth maps, whose objects are
natural numbers and morphisms $m\to n$ are smooth maps $\R^m\to\R^n$.
The operator $D$ takes an $f: \R^m\to\R^n$ and produces a $D(f):
\R^m\times\R^m\to\R^n$ given by $D(f)(x', x) = J_f(x)\cdot x'$, where
$J_f(x)$ is the Jacobian of $f$ at the point $x$.  Be aware, however,
that this category is not closed, and hence is not a differential
$\lambda$-category.

Let us provide some intuition for the axioms: D1 says $D$ is linear;
D2 that $D(f)$ is additive in its first coordinate; D3 and D4 assert that
$D$ is compatible with the product structure, and D5 is the chain
rule.  We refer the reader to \citep[Lemma~2.2.2]{MR2591951} for the
proof that D6 is essentially requiring that $D(f)$ be linear (in the
sense defined below) in its first variable.  D7 is essentially
independence of order of partial differentiation.

Axiom D4 asserts that $D$ commutes with pairing.  We shall also need
the following formula for the derivative of a product.

\begin{lemma}\label{lem-D-times}
  $D(f\times g) = \<D(f)\circ\<\pi_1\circ\pi_1, \pi_1\circ\pi_2\>,
  D(g)\circ\<\pi_2\circ\pi_1, \pi_2\circ\pi_2\>\>$.
\end{lemma}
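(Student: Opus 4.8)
The plan is to reduce the derivative of a product to the derivative of a pairing, for which axiom~D4 gives direct control. First I would rewrite $f\times g$ by means of \eqref{eq:times-pair}, obtaining $f\times g = \<f\circ\pi_1, g\circ\pi_2\>$. Applying $D$ and invoking D4, which asserts that $D$ commutes with pairing, yields $D(f\times g) = \<D(f\circ\pi_1), D(g\circ\pi_2)\>$. The problem thus splits into computing the derivatives of the two composites $f\circ\pi_1$ and $g\circ\pi_2$ separately.

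Next I would treat each composite with the chain rule D5, namely $D(f\circ g) = D(f)\circ\<D(g), g\circ\pi_2\>$. For the first component this gives $D(f\circ\pi_1) = D(f)\circ\<D(\pi_1), \pi_1\circ\pi_2\>$, and for the second $D(g\circ\pi_2) = D(g)\circ\<D(\pi_2), \pi_2\circ\pi_2\>$. The derivatives of the projections are supplied by D3: $D(\pi_1) = \pi_1\circ\pi_1$ and $D(\pi_2) = \pi_2\circ\pi_1$. Substituting these in produces exactly $D(f\circ\pi_1) = D(f)\circ\<\pi_1\circ\pi_1, \pi_1\circ\pi_2\>$ and $D(g\circ\pi_2) = D(g)\circ\<\pi_2\circ\pi_1, \pi_2\circ\pi_2\>$, which reassemble into the claimed formula.

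There is no genuine conceptual obstacle here; the lemma is a direct consequence of D3, D4, and D5 applied to the standard expression \eqref{eq:times-pair} of a product as a pairing of composites with projections. The only point demanding care is the bookkeeping of the projections, since two distinct pairs of them occur at once: the inner pair associated with the domain $X\times U$ of $f\times g$, and the outer pair associated with the doubled domain $(X\times U)\times(X\times U)$ on which $D(f\times g)$ is defined. I would keep track of the source and target of each $\pi_i$ to confirm that the composites $\pi_1\circ\pi_1$, $\pi_1\circ\pi_2$, and so on, have the types required for the pairings to be well formed and to match the domains of $D(f)$ and $D(g)$.
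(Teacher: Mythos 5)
Your proof is correct and follows exactly the paper's own argument: rewrite $f\times g$ as $\<f\circ\pi_1, g\circ\pi_2\>$ via \eqref{eq:times-pair}, apply D4 to commute $D$ past the pairing, and then finish each component with the chain rule D5 and the projection derivatives from D3. The only difference is that you spell out the D5/D3 substitutions explicitly, which the paper leaves to the reader.
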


\begin{proof}
  By \eqref{eq:times-pair}, $f\times g = \<f\circ\pi_1,
  g\circ\pi_2\>$.  Therefore $D(f\times g) = D(\<f\circ\pi_1,
  g\circ\pi_2\>)= \<D(f\circ\pi_1), D(g\circ\pi_2)\>$ by D4.  Applying
  axioms D5 and D3 concludes the proof.
\end{proof}

We shall also need the following interchange property of the operator
$D$ that is slightly more general than D7.

\begin{lemma}\label{lemma-D-interchange}
  $D(D(f)) \circ \<\<i, h\>, \<g, k\>\> = D(D(f)) \circ \<\<i, g\>,
  \<h, k\>\>$.
\end{lemma}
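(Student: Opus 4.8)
The plan is to reduce the statement to D7 by exploiting the additivity of $D(D(f))$ in its first coordinate, which is exactly what axiom D2 supplies. The two sides differ only in the roles played by $h$ and $g$ in the first two slots, so the strategy is to peel off the $i$-part, which D6 collapses into a first derivative, leaving behind a term to which D7 applies directly.

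First I would write $\<i, h\> = \<i, 0\> + \<0, h\>$, which is legitimate because pairing is additive in a cartesian left-additive category, and then apply D2 to the morphism $D(f)$, viewing $\<i, h\>$ as the first coordinate of $D(D(f))$ and $\<g, k\>$ as the second. This splits the left-hand side as
\[
D(D(f))\circ\<\<i, 0\>, \<g, k\>\> + D(D(f))\circ\<\<0, h\>, \<g, k\>\>.
\]
The first summand has first coordinate of the form $\<-, 0\>$, so D6 collapses it to $D(f)\circ\<i, k\>$. The second summand has first coordinate $\<0, h\>$, so D7 rewrites it as $D(D(f))\circ\<\<0, g\>, \<h, k\>\>$.

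I would then run the identical computation on the right-hand side: decompose $\<i, g\> = \<i, 0\> + \<0, g\>$, apply D2, use D6 on the $\<i, 0\>$-term to recover $D(f)\circ\<i, k\>$, and leave the $\<0, g\>$-term as $D(D(f))\circ\<\<0, g\>, \<h, k\>\>$. Both sides are then visibly equal to
\[
D(f)\circ\<i, k\> + D(D(f))\circ\<\<0, g\>, \<h, k\>\>,
\]
which finishes the argument. The only real subtlety is bookkeeping the pairings so that D2, D6, and D7 each apply in the precise syntactic shape in which they are stated; there is no genuine obstacle, since the generalization of D7 from its special case $i = 0$ to arbitrary $i$ is exactly the linear-plus-remainder decomposition that D2 and D6 provide.
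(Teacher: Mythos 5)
Your proposal is correct and follows essentially the same route as the paper's own proof: decompose $\<i,h\>$ (resp.\ $\<i,g\>$) as $\<i,0\>+\<0,h\>$ (resp.\ $\<i,0\>+\<0,g\>$) using additivity of pairing, split with D2, collapse the $\<i,0\>$-summand to $D(f)\circ\<i,k\>$ via D6, and identify the remaining summands via D7. The only cosmetic difference is that the paper computes both sides down to their sums and invokes D7 once at the end to equate the leftover terms, whereas you apply D7 mid-stream to one side; the content is identical.
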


\begin{proof}
  We have:
  \begin{alignat*}{3}
    D(D(f)) &\circ \<\<i,h\>, \<g,k\>\>
    \\
    & = D(D(f)) \circ \<\<i,0\> + \<0,h\>, \<g,k\>\>
    & \qquad & \textup{because pairing is additive}
    \displaybreak[0]
    \\
    & = D(D(f)) \circ \<\<i,0\>, \<g,k\>\>
      + D(D(f))\circ \<\<0, h\>, \<g, k\>\>
    & \qquad & \textup{by D2}
    \displaybreak[0]
    \\
    & = D(f) \circ \<i, k\> + D(D(f))\circ\<\<0,h\>, \<g,k\>\>
    & \qquad & \textup{by D6},
    \\
    \displaybreak[0]
    \\
    D(D(f)) &\circ \<\<i,g\>, \<h, k\>\>
    \\
    & = D(D(f)) \circ \<\<i,0\> + \<0, g\>, \<h,k\>\>
    & \qquad & \textup{because pairing is additive}
    \displaybreak[0]
    \\
    & = D(D(f)) \circ \<\<i,0\>, \<h,k\>\>
      + D(D(f)) \circ \<\<0,g\>, \<h, k\>\>
    & \qquad & \textup{by D2}
    \displaybreak[0]
    \\
    & = D(f) \circ \<i,k\> + D(D(f))\circ \<\<0, g\>, \<h,k\>\>
    & \qquad & \textup{by D6}.
  \end{alignat*}
  The right hand sides are equal by D7, hence the equality of the left
  hand sides.
\end{proof}

\begin{corollary}\label{cor-D-interchange}
  $D(D(f))\circ\sigma = D(D(f))$.
\end{corollary}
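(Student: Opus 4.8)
The plan is to observe that the distributivity isomorphism $\sigma$ performs exactly the transposition of coordinates to which $D(D(f))$ is insensitive by \lemref{lemma-D-interchange}. Reasoning set-theoretically first, $D(D(f))$ acts on quadruples $((a,b),(c,d))$, the interchange lemma asserts that its value is unchanged under the swap $b\leftrightarrow c$, and $\sigma$ is precisely the map $((a,b),(c,d))\mapsto((a,c),(b,d))$ realizing that swap. Hence precomposition with $\sigma$ ought to leave $D(D(f))$ untouched, and the whole content of the corollary is a repackaging of the interchange lemma.

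To make this rigorous I would begin from $D(D(f)) = D(D(f))\circ\id$, where $\id$ is the identity on the domain $(X\times X)\times(X\times X)$ of $D(D(f))$, and expand this identity into nested projections. Using \eqref{eq:id-pair} on each factor gives $\id = \<\<\pi_1\circ\pi_1, \pi_2\circ\pi_1\>, \<\pi_1\circ\pi_2, \pi_2\circ\pi_2\>\>$. I would then apply \lemref{lemma-D-interchange} with $i = \pi_1\circ\pi_1$, $h = \pi_2\circ\pi_1$, $g = \pi_1\circ\pi_2$, and $k = \pi_2\circ\pi_2$, which exchanges the two middle projections and produces $D(D(f))\circ\<\<\pi_1\circ\pi_1, \pi_1\circ\pi_2\>, \<\pi_2\circ\pi_1, \pi_2\circ\pi_2\>\>$.

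The final step is to fold this rearranged pairing back into $\sigma$. By \eqref{eq:sigma-pair} we have $\sigma\circ\<\<f,g\>,\<h,k\>\> = \<\<f,h\>,\<g,k\>\>$; applying this with the same labels $f = \pi_1\circ\pi_1$, $g = \pi_2\circ\pi_1$, $h = \pi_1\circ\pi_2$, $k = \pi_2\circ\pi_2$, and recalling that $\<\<f,g\>,\<h,k\>\>$ is the identity just identified, the rearranged pairing equals $\sigma\circ\id = \sigma$. Chaining the three equalities then yields $D(D(f)) = D(D(f))\circ\sigma$, which is the assertion.

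I do not anticipate a genuine obstacle: the mathematical weight is borne entirely by \lemref{lemma-D-interchange}, and what remains is bookkeeping. The single point requiring care is the index matching, namely confirming that the permutation encoded in $\sigma$ through \eqref{eq:sigma-pair} is exactly the transposition $h\leftrightarrow g$ of the interchange lemma and not some other rearrangement of the four factors. Carrying the explicit projection labels through every line, rather than abbreviating them, is the surest way to keep this alignment correct.
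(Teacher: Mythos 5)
Your proof is correct and takes essentially the same route as the paper's: both instantiate \lemref{lemma-D-interchange} at the four nested projections $i=\pi_1\circ\pi_1$, $h=\pi_2\circ\pi_1$, $g=\pi_1\circ\pi_2$, $k=\pi_2\circ\pi_2$, so that one side of the interchange equation is the identity (via \eqref{eq:circ-pair} and \eqref{eq:id-pair}) and the other is $\sigma$. The only cosmetic difference is that you identify the rearranged pairing with $\sigma$ through \eqref{eq:sigma-pair} applied to $\id$, where the paper simply reads it off from the explicit formula $\sigma=\<\<\pi_1\circ\pi_1, \pi_1\circ\pi_2\>, \<\pi_2\circ\pi_1, \pi_2\circ\pi_2\>\>$.
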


\begin{proof}
  Apply \lemref{lemma-D-interchange} to $i = \pi_1\circ\pi_1$, $g =
  \pi_1\circ\pi_2$, $h = \pi_2\circ\pi_1$, and $k = \pi_2\circ\pi_2$,
  and observe that $\<\<i, h\>, \<g, k\>\> = \<\<\pi_1\circ\pi_1,
  \pi_2\circ\pi_1\>, \<\<\pi_1\circ\pi_2, \pi_2\circ\pi_2\>\> =
  \<\<\pi_1, \pi_2\>\circ\pi_1, \<\pi_1, \pi_2\>\circ\pi_2\> =
  \<\pi_1, \pi_2\> = \id$ by \eqref{eq:circ-pair} and
  \eqref{eq:id-pair}.
\end{proof}

Following \citet{MR2591951}, we say that a morphism $f$ is
\emph{linear} if $D(f)=f\circ\pi_1$.  By\citep[Lemma~2.2.2]{MR2591951},
the class of linear morphisms is closed under sum, composition,
pairing, and product, and contains all identities, projections, and
zero morphisms.  This often allows us to argue that a morphism is
linear simply by inspection and to conclude that its derivative is
obtained by precomposing with the projection $\pi_1$.  For example,
the unit and associativity constraints $\ell$, $r$, and $a$ are
linear, as well as the symmetry $c$ and the distributivity isomorphism
$\sigma$.  Also, axiom D2 implies that any linear morphism is
additive.

\subsection{Differential $\lambda$-categories}

The notion of cartesian differential category was partly motivated by
the desire to model the differential $\lambda$-calculus of
\citet{MR2016523} categorically.  \citet{MR2591951} proved that
cartesian differential categories are sound and complete to model
suitable term calculi.  However, the properties of cartesian
differential categories are too weak for modeling the full
differential $\lambda$-calculus because the differential operator is
not necessarily compatible with the cartesian closed structure.  For
this reason, \citet{1857313} introduced the notion of differential
$\lambda$-category.

\begin{definition}[{\citep[Definition~4.4]{1857313}}]
  A \emph{differential $\lambda$-category} is a cartesian closed
  differential category such that for each $f : Z\times X\to Y$ holds
  \begin{equation}
    \label{eq:D-curry}
    D(\curry(f))=\curry(D(f)\circ\<\pi_1\times 0_X, \pi_2\times\id_X\>).
  \end{equation}
\end{definition}
We show in \proref{prop-diff-lambda-cat} that it suffices to check
this condition only for the evaluation morphisms.

\begin{example}[Convenient differential $\lambda$-category]
  \citet{BluteEhrhardTasson} proved that the category $C^\infty$ of
  convenient vector spaces and smooth maps is a cartesian closed
  differential category.  We are going to show that it is in fact a
  differential $\lambda$-category.

  We begin by recalling the notion of convenient vector space,
  following \citet{MR1471480}.  Let $E$ be a locally convex vector
  space.  A curve $c : \R\to E$ is called \emph{differentiable} if the
  \emph{derivative} $c'(t) = \lim_{s\to 0}\frac{1}{s}(c(t+s)-c(t))$ at
  $t$ exists for all $t$.  A curve $c : \R\to E$ is called
  \emph{smooth} if all iterated derivatives exist.  Let
  $\mathcal{C}_E$ denote the set of all smooth curves into $E$.  A
  locally convex vector space $E$ is called \emph{convenient} if it
  satisfies any of the equivalent conditions of
  \citep[Theorem~2.14]{MR1471480}.  In particular, $E$ is convenient
  if the following holds: for any curve $c : \R\to E$, if the
  composites $\ell\circ c : \R\to\R$ are smooth for all $\ell\in E^*$,
  then $c$ is smooth; here $E^*$ denotes the space of all continuous
  linear functionals on $E$.  A map $f : E\to F$ between convenient
  vector spaces is called \emph{smooth} if it maps smooth curves into
  $E$ to smooth curves into $F$; that is, if $f\circ
  c\in\mathcal{C}_F$ for all $c\in\mathcal{C}_E$.

  Let $C^\infty$ denote the category of convenient vector spaces and
  smooth maps.  \citeauthor{MR1471480} proved
  \citep[Theorem~3.12]{MR1471480} that the category $C^\infty$ is
  cartesian closed.  For a pair of convenient vector spaces $E$ and
  $F$, the exponential object $E\Rightarrow F$ is the locally convex
  space $C^\infty(E, F)$ of all smooth mappings $E\to F$ with
  pointwise linear structure and the initial topology with respect to
  all mappings $c^* : C^\infty(E, F)\to C^\infty(\R, F)$, $f\mapsto
  f\circ c$, for $c\in\mathcal{C}_E$, where each space $C^\infty(\R,
  F)$ is given the topology of uniform convergence on compact sets of
  each derivative separately.  \citet{BluteEhrhardTasson} proved that
  $C^\infty$ is a cartesian differential category.  The differential
  operator $D : C^\infty(E, F)\to C^\infty(E\times E, F)$ is given by
  $D(f)(x', x) = c'(0)$, where $c : \R\to F$ is the smooth curve into
  $F$ given by $c(t) = f(x + tx')$.

  Let us show that $C^\infty$ is a differential $\lambda$-category.
  By \proref{prop-diff-lambda-cat}, it suffices to show that
  equation~\eqref{eq:D-uncurry-2} holds, i.e.,
  \[
  \ev\circ (\pi_1\times\id_E) = D(\ev)\circ \<\pi_1\times 0,
  \pi_2\times \id_E\> : (C^\infty(E,F)\times C^\infty(E,F))\times
  E\to F.
  \]
  Let $((f, g), x)\in (C^\infty(E,F)\times C^\infty(E,F))\times E$ be
  an arbitrary point.  Evaluating the right hand side of the equation
  at the point $((f, g), x)$, we obtain:
  \begin{align*}
  \lim_{t\to 0}\frac{\ev((g, x) + t(f, 0)) - \ev(g, x)}{t}
  & = \lim_{t\to 0}\frac{\ev(g+tf, x)-\ev(g, x)}{t}
  \\
  & = \lim_{t\to 0}\frac{(g+tf)(x) - g(x)}{t}
  \\
  & = \lim_{t\to 0}\frac{g(x) + tf(x) - g(x)}{t}
  \\
  & = \lim_{t\to 0}f(x)
  \\
  & = f(x),
  \end{align*}
  which obviously coincides with the value of the left hand side at
  the point $((f, g), x)$, hence the assertion.
\end{example}

The reader is referred to \citep{1857313} for two other examples of
differential $\lambda$-categories.

\section{Tangent bundle}

The differential operator $D$ allows us to replicate the construction
of the tangent bundle of a smooth manifold from differential geometry
in any cartesian differential category.  In this section, we define
the tangent bundle functor $T$ on a cartesian differential category
$\C$ and study its properties.  We prove in
Sections~\ref{sec-monad-structure} and \ref{sec-tensorial-strength}
that $T$ can be equipped with the structure of a strong monad.
Furthermore, we show in Section~\ref{sec-monoidal-structure} that
this monad is commutative.  We prove in
Section~\ref{sec-distributive-law} that the natural transformation
known in differential geometry as ``canonical flip'' is a distributive
law of the monad $T$ over itself.  Starting from
Section~\ref{sec-monad-on-diff-cat} we assume that $\C$ is a cartesian
closed differential category, and in fact a differential
$\lambda$-category.  We study the closed structure and the enrichment
of the functor $T$ in Sections~\ref{sec-closed-structure} and
\ref{sec-enrichment} respectively.

\subsection{The tangent bundle functor $T$}

Let $\C$ be a cartesian differential category.  The \emph{tangent
  bundle functor} $T : \C \to \C$ is defined by $TX = X\times X$ and
$T(f) = \<D(f), f\circ\pi_2\>$.  Intuitively, $TX$ is a set of pairs
$(x', x)$ consisting of a point $x\in X$ and a tangent vector $x'\in
X$ at the point $x$.  Set-theoretically, $T(f)$ is given by $T(f)(x',
x) = (D(f)(x', x), f(x))$.  Here $D(f)(x', x)$ plays the role of the
Jacobian of $f$ at the point $x$ multiplied by the vector $x'$.

\begin{lemma}
  $T$ is a functor.
\end{lemma}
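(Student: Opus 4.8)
To show that $T$ is a functor, I need to verify two things: that $T$ preserves identities, i.e.\ $T(\id_X)=\id_{TX}$, and that $T$ preserves composition, i.e.\ $T(g\circ f)=T(g)\circ T(f)$ for composable morphisms $f$ and $g$.

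The plan is to attack each verification by unfolding the definition $T(h)=\<D(h),h\circ\pi_2\>$ and reducing everything to the axioms D1--D7 together with the cartesian identities \eqref{eq:proj}--\eqref{eq:times-circ-pair}. For the identity law, I would compute $T(\id)=\<D(\id),\id\circ\pi_2\>=\<\pi_1,\pi_2\>$ using axiom D3 ($D(\id)=\pi_1$), and then invoke \eqref{eq:id-pair} to conclude $\<\pi_1,\pi_2\>=\id=\id_{TX}$. This direction is immediate.

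The substantive part is the composition law, and I expect this to be the main obstacle, though it is really just an exercise in applying the chain rule. Starting from $T(g\circ f)=\<D(g\circ f),(g\circ f)\circ\pi_2\>$, I would rewrite the first component using axiom D5 (the chain rule) as $D(g\circ f)=D(g)\circ\<D(f),f\circ\pi_2\>$. On the other side, I would expand the composite $T(g)\circ T(f)=\<D(g),g\circ\pi_2\>\circ\<D(f),f\circ\pi_2\>$ using \eqref{eq:circ-pair} to distribute the pairing over precomposition, obtaining $\<D(g)\circ\<D(f),f\circ\pi_2\>,\,(g\circ\pi_2)\circ\<D(f),f\circ\pi_2\>\>$. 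The first components now match by the chain rule, and for the second components I would simplify $(g\circ\pi_2)\circ\<D(f),f\circ\pi_2\>$ using associativity of composition and \eqref{eq:proj} (namely $\pi_2\circ\<D(f),f\circ\pi_2\>=f\circ\pi_2$) to get $g\circ f\circ\pi_2$, which is exactly the second component of $T(g\circ f)$.

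The key insight worth emphasizing is that the functoriality of $T$ is, at bottom, nothing more than the chain rule D5 packaged together with the bookkeeping of the projection $\pi_2$ tracking the underlying point; this mirrors the remark in the introduction that functoriality of the tangent bundle in differential geometry \emph{reduces to the chain rule}. No appeal to the more delicate axioms D6 or D7 is needed here, so the only genuine risk is a transcription slip in manipulating the nested pairings, which the rewrite rules \eqref{eq:circ-pair} and \eqref{eq:proj} handle cleanly.
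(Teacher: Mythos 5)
Your proposal is correct and follows essentially the same route as the paper's proof: the identity law via D3 and \eqref{eq:id-pair}, and the composition law via the chain rule D5 together with \eqref{eq:circ-pair} and \eqref{eq:proj} to match the second components. The only cosmetic difference is that you expand both sides and compare, whereas the paper rewrites $T(f\circ g)$ step by step into $T(f)\circ T(g)$; the underlying computation is identical.
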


\begin{proof}
  Let us check that $T$ preserves identities and composition.  We
  have:
  \begin{alignat*}{3}
    T(\id)
    &= \< D(\id), \id\circ\pi_2\>
    & \qquad & \textup{by def. of $T$}
    \displaybreak[0]
    \\
    &= \<\pi_1, \pi_2\>
    & \qquad & \textup{by D3}
    \displaybreak[0]
    \\
    &= \id
    & \qquad & \textup{by \eqref{eq:id-pair}},
    \\
    \displaybreak[0]
    \\
    T(f\circ g)
    &= \< D(f\circ g), (f\circ g)\circ\pi_2\>
    & \qquad & \textup{by def. of $T$}
    \displaybreak[0]
    \\
    &= \< D(f)\circ \<D(g), g\circ \pi_2\>, f\circ (g\circ\pi_2)\>
    & \qquad & \textup{by D5}
    \displaybreak[0]
    \\
    &= \< D(f)\circ \<D(g), g\circ \pi_2\>,
    f\circ \pi_2 \circ \<D(g), g\circ\pi_2\>\>
    & \qquad & \textup{by \eqref{eq:proj}}
    \displaybreak[0]
    \\
    &=\<D(f), f\circ\pi_2\> \circ \<D(g), g\circ\pi_2\>
    & \qquad & \textup{by \eqref{eq:circ-pair}}
    \displaybreak[0]
    \\
    &= T(f) \circ T(g)
    & \qquad & \textup{by def. of $T$}.
  \end{alignat*}
  The lemma is proven.
\end{proof}

\begin{lemma}\label{lemma-T-additive-functor}
  $T$ is an additive functor.
\end{lemma}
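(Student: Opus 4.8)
The plan is to verify that $T$, which we already know to be a functor, also preserves the additive structure on hom-sets; that is, to show $T(f + g) = T(f) + T(g)$ and $T(0) = 0$ for all parallel morphisms $f, g : X \to Y$. Both verifications unfold the definition $T(f) = \< D(f), f \circ \pi_2\>$ and reduce to three facts already available: axiom~D1, the left-additivity equations $(f+g)\circ h = f\circ h + g\circ h$ and $0\circ h = 0$, and the additivity of pairing established in the Remark, namely $\< a + b, c + d\> = \< a, c\> + \< b, d\>$ together with $\< 0, 0\> = 0$.

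For the sum, I would first expand $T(f+g) = \< D(f+g), (f+g)\circ\pi_2\>$. The first component becomes $D(f) + D(g)$ by D1, and the second becomes $f\circ\pi_2 + g\circ\pi_2$ by left-additivity. Applying the additivity of pairing then splits the result as $\< D(f), f\circ\pi_2\> + \< D(g), g\circ\pi_2\> = T(f) + T(g)$, as desired.

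For the zero, I would similarly compute $T(0) = \< D(0), 0\circ\pi_2\>$. Here $D(0) = 0$ by D1 and $0\circ\pi_2 = 0$ by left-additivity, so $T(0) = \< 0, 0\> = 0$ by the corresponding property of pairing.

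I do not anticipate any genuine obstacle: the argument is a direct computation. The only point worth flagging is that it relies essentially on the additivity of the pairing combinator $\<-,-\>$, which is a consequence of the cartesian left-additive structure rather than something one gets for free; the cleanest presentation therefore invokes the Remark instead of re-deriving the componentwise identities by hand.
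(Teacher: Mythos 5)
Your proposal is correct and follows exactly the paper's own proof: expand $T(f+g)$ and $T(0)$ via the definition of $T$, apply D1 to the first component and left-additivity to the second, and conclude by the additivity of pairing from the Remark. Nothing is missing.
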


\begin{proof}
  Let us check that for any objects $X$ and $Y$ the map $T : \C(X, Y)
  \to \C(TX, TY)$ is additive.  We have:
  \begin{alignat*}{3}
    T(0)
    &= \<D(0), 0 \circ \pi_2\>
    & \qquad & \textup{by def. of $T$}
    \displaybreak[0]
    \\
    &= \<0, 0\>
    & \qquad & \textup{by D1 and because $\C$ is left-additive}
    \displaybreak[0]
    \\
    &= 0
    & \qquad & \textup{because pairing is additive},
    \\
    \displaybreak[0]
    \\
    T(f+g)
    &= \<D(f+g), (f+g)\circ\pi_2\>
    & \qquad & \textup{by def. of $T$}
    \displaybreak[0]
    \\
    &= \<D(f) + D(g), (f\circ\pi_2) + (g\circ\pi_2)\>
    & \qquad & \textup{by D1 and because $\C$ is left-additive}
    \displaybreak[0]
    \\
    &= \<D(f), f\circ\pi_2\> + \<D(g), g\circ\pi_2\>
    & \qquad & \textup{because pairing is additive}
    \displaybreak[0]
    \\
    &= T(f) + T(g)
    & \qquad & \textup{by def. of $T$}.
  \end{alignat*}
  The lemma is proven.
\end{proof}

\begin{lemma}\label{lemma-T-additive-morphisms}
  If $f$ is linear, then $T(f) = f\times f$.
\end{lemma}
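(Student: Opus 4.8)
The plan is to unfold the definition of the tangent bundle functor and apply the linearity hypothesis directly; this is a one-line computation, so no induction or case analysis is needed.

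First I would recall that, by definition of $T$, we have $T(f) = \<D(f), f\circ\pi_2\>$. The hypothesis that $f$ is linear means precisely that $D(f) = f\circ\pi_1$. Substituting this identity into the first component of the pairing gives
\[
  T(f) = \<D(f), f\circ\pi_2\> = \<f\circ\pi_1, f\circ\pi_2\>.
\]

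It then remains only to recognize the right-hand side as a product. By equation~\eqref{eq:times-pair} we have $f\times f = \<f\circ\pi_1, f\circ\pi_2\>$, which is exactly the expression just obtained. Hence $T(f) = f\times f$, as claimed.

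There is no genuine obstacle here: the result is an immediate consequence of three facts recorded earlier, namely the definition of $T$, the definition of linearity as $D(f) = f\circ\pi_1$, and the formula~\eqref{eq:times-pair} expressing a product as a pairing of composites with the projections. The only point that requires a little care is to invoke linearity in its defining form $D(f) = f\circ\pi_1$, rather than in some derived consequence, so that the first component of the pairing matches the first component of $f\times f$ on the nose.
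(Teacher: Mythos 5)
Your proof is correct and is exactly the paper's argument spelled out in full: the paper's own proof reads ``Follows from the definition of $T$ and \eqref{eq:times-pair}'', and you have simply made explicit the substitution $D(f)=f\circ\pi_1$ and the identification $\<f\circ\pi_1, f\circ\pi_2\> = f\times f$. Nothing further is needed.
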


\begin{proof}
  Follows from the definition of $T$ and \eqref{eq:times-pair}.
\end{proof}

\subsection{The monad structure on $T$}
\label{sec-monad-structure}

Let us show that the tangent bundle functor $T$ is part of a monad.
The unit and multiplication are defined as follows.  For each object
$X$ of $\C$, we denote by $\eta_X$ the morphism $\<0, \id\> : X\to
X\times X= TX$ and by $\mu_X$ the morphism
\[
\<\pi_2\circ\pi_1 + \pi_1\circ\pi_2, \pi_2\circ\pi_2\>:
TTX = (X \times X) \times (X\times X) \to X \times X = TX.
\]
Set-theoretically, $\eta_X(x) = (0, x)$ and $\mu_X((w, v),(u, x)) = (v
+ u, x)$.  Clearly, $\eta_X$ and $\mu_X$ are linear morphisms.

\begin{lemma}
  $\eta$ is a natural transformation $\Id\to T$.
\end{lemma}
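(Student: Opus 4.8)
The plan is to verify the naturality square directly. Naturality of $\eta : \Id\to T$ means that for every morphism $f : X\to Y$ the equation $T(f)\circ\eta_X = \eta_Y\circ f$ holds, where $\eta_X = \<0,\id\> : X\to TX$ and $T(f) = \<D(f), f\circ\pi_2\>$. Since both sides are morphisms into the product $TY = Y\times Y$, I would compute each side explicitly as a pairing and compare components.

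First I would expand the left-hand side. Using the definition of $T$ and then \eqref{eq:circ-pair} to push the composition inside the pairing,
\[
T(f)\circ\eta_X = \<D(f), f\circ\pi_2\>\circ\<0,\id\>
= \<D(f)\circ\<0,\id\>,\; f\circ\pi_2\circ\<0,\id\>\>.
\]
The first component collapses to $0$ by the second half of axiom D2, which asserts $D(f)\circ\<0, v\> = 0$; the second component simplifies to $f$ by \eqref{eq:proj}, since $\pi_2\circ\<0,\id\> = \id$. Hence the left-hand side equals $\<0, f\>$.

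Next I would expand the right-hand side. Applying \eqref{eq:circ-pair} again and using left-additivity,
\[
\eta_Y\circ f = \<0,\id\>\circ f = \<0\circ f,\; \id\circ f\> = \<0, f\>,
\]
where $0\circ f = 0$ by the defining property of a left-additive category. Comparing the two computations yields $T(f)\circ\eta_X = \<0, f\> = \eta_Y\circ f$, which is exactly the naturality condition.

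This argument is entirely a routine unfolding, so there is no genuine obstacle; the only point requiring a specific axiom rather than a generic cartesian identity is the vanishing of the first component, $D(f)\circ\<0,\id\> = 0$, which is precisely the content of the zero clause of D2. Everything else follows from the universal property of pairing and left-additivity. One could alternatively phrase the entire proof in set-theoretic notation, noting that $T(f)(\eta_X(x)) = T(f)(0, x) = (D(f)(0, x), f(x)) = (0, f(x)) = \eta_Y(f(x))$, but I would prefer to give the combinator-level derivation to match the style of the surrounding lemmas.
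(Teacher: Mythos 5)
Your proof is correct and follows essentially the same route as the paper: both expand $T(f)\circ\eta_X$ via \eqref{eq:circ-pair}, kill the first component using the zero clause of D2 ($D(f)\circ\<0,v\>=0$) and simplify the second via \eqref{eq:proj}, then reduce $\eta_Y\circ f$ to $\<0,f\>$ using left-additivity. The only difference is presentational --- you meet in the middle at $\<0,f\>$ while the paper writes one chain of equalities --- which is immaterial.
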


\begin{proof}
  For any morphism $f : X\to Y$, we have:
  \begin{alignat*}{3}
    T(f)\circ \eta_X
    &= \<D(f), f\circ\pi_2\>\circ\<0, \id\>
    & \qquad & \textup{by def. of $T$ and $\eta$}
    \displaybreak[0]
    \\
    &= \<D(f)\circ\<0, \id\>, f\circ\pi_2\circ\<0, \id\>\>
    & \qquad & \textup{by \eqref{eq:circ-pair}}
    \displaybreak[0]
    \\
    &= \<0, f\>
    & \qquad & \textup{by D2 and \eqref{eq:proj}}
    \displaybreak[0]
    \\
    &= \<0\circ f, \id\circ f\>
    & \qquad & \textup{because $\C$ is left-additive}
    \displaybreak[0]
    \\
    &= \<0, \id\> \circ f
    & \qquad & \textup{by \eqref{eq:circ-pair}}
    \displaybreak[0]
    \\
    &= \eta_Y \circ f
    & \qquad & \textup{by def. of $\eta$},
  \end{alignat*}
  hence the assertion.
\end{proof}

\begin{lemma}
  $\mu$ is a natural transformation $TT\to T$.
\end{lemma}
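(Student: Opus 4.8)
The plan is to verify the naturality square $\mu_Y\circ TT(f) = T(f)\circ\mu_X$ for an arbitrary morphism $f : X\to Y$ by computing both composites explicitly and comparing them componentwise. Since $TY = Y\times Y$, by the universal property of pairing it suffices to check that the two sides agree after postcomposition with $\pi_1$ and $\pi_2$.

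First I would unfold $TT(f) = T(T(f))$. By definition of $T$ we have $TT(f) = \<D(T(f)), T(f)\circ\pi_2\>$, and \eqref{eq:circ-pair} gives $T(f)\circ\pi_2 = \<D(f)\circ\pi_2, f\circ\pi_2\circ\pi_2\>$. For the remaining component, D4 yields $D(T(f)) = \<D(D(f)), D(f\circ\pi_2)\>$, and D5 together with D3 gives $D(f\circ\pi_2) = D(f)\circ\<\pi_2\circ\pi_1, \pi_2\circ\pi_2\>$. Hence
\[
TT(f) = \<\<D(D(f)), D(f)\circ\<\pi_2\circ\pi_1, \pi_2\circ\pi_2\>\>, \<D(f)\circ\pi_2, f\circ\pi_2\circ\pi_2\>\>.
\]

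Next I would compute the two sides. For $T(f)\circ\mu_X$, applying \eqref{eq:circ-pair}, then D2 to split the sum $\pi_2\circ\pi_1 + \pi_1\circ\pi_2$ in the first coordinate of $\mu_X$, and \eqref{eq:proj} in the second coordinate, gives
\[
T(f)\circ\mu_X = \<D(f)\circ\<\pi_2\circ\pi_1, \pi_2\circ\pi_2\> + D(f)\circ\<\pi_1\circ\pi_2, \pi_2\circ\pi_2\>, \; f\circ\pi_2\circ\pi_2\>.
\]
For $\mu_Y\circ TT(f)$, using \eqref{eq:circ-pair}, left-additivity, and reading off the relevant projections of $TT(f)$ via \eqref{eq:proj}, I obtain
\[
\mu_Y\circ TT(f) = \<D(f)\circ\<\pi_2\circ\pi_1, \pi_2\circ\pi_2\> + D(f)\circ\pi_2, \; f\circ\pi_2\circ\pi_2\>.
\]
The second components already coincide, and the first components agree once we note that $\pi_2 = \<\pi_1\circ\pi_2, \pi_2\circ\pi_2\>$, which follows from \eqref{eq:circ-pair} and \eqref{eq:id-pair}.

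The main bookkeeping obstacle is computing $D(T(f))$ correctly, as it is the one place where the chain rule D5 intervenes and a mislabelled projection would propagate through the rest of the calculation. On the other hand, it is perhaps noteworthy that the interchange machinery (D6, D7, and \corref{cor-D-interchange}) is not needed at all: the term $D(D(f))$ sits in the $\pi_1\circ\pi_1$ slot of $TT(f)$, which is annihilated by $\mu_Y$ since the definition of $\mu$ reads only the $\pi_2\circ\pi_1$, $\pi_1\circ\pi_2$, and $\pi_2\circ\pi_2$ components. Thus no symmetry of second derivatives is invoked, and the verification reduces to the additivity axiom D2 and elementary manipulations of pairings.
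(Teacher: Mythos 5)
Your proof is correct and follows essentially the same route as the paper's: both sides are reduced, via \eqref{eq:circ-pair}, D2, left-additivity, and the identification $\<\pi_1\circ\pi_2,\pi_2\circ\pi_2\>=\pi_2$ from \eqref{eq:circ-pair} and \eqref{eq:id-pair}, to the common expression $\<D(f)\circ\<\pi_2\circ\pi_1,\pi_2\circ\pi_2\> + D(f)\circ\pi_2,\; f\circ\pi_2\circ\pi_2\>$; the only difference is cosmetic, namely that you unfold $TT(f)$ completely (via D4, D5, D3) before postcomposing with $\mu_Y$, whereas the paper applies D5 and D3 at the end. Your closing observation that the $D(D(f))$ component is discarded by $\mu_Y$, so that D6, D7, and the interchange lemmas are never needed, is accurate and consistent with the paper's proof, which likewise invokes none of them.
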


\begin{proof}
  Let $f : X\to Y$ be a morphism in $\C$.  We have, on the one hand:
  \begin{align*}
    & T(f)\circ\mu_X
    \\[2mm]
    ={} & \quad\textup{by def. of $T$ and $\mu$}
    \\[2mm]
    & \<D(f), f \circ \pi_2\> \circ \<\pi_2 \circ \pi_1 + \pi_1 \circ
    \pi_2, \pi_2 \circ \pi_2\>
    \displaybreak[0]
    \\[2mm]
    ={} & \quad\textup{by \eqref{eq:circ-pair}}
    \\[2mm]
    & \<D(f) \circ \<\pi_2 \circ \pi_1 + \pi_1 \circ \pi_2, \pi_2
    \circ \pi_2 \>, f \circ \pi_2 \circ \<\pi_2 \circ \pi_1 + \pi_1
    \circ \pi_2, \pi_2 \circ \pi_2\>\>
    \displaybreak[0]
    \\[2mm]
    ={} & \quad\textup{by D2 and \eqref{eq:proj}}
    \\[2mm]
    & \<D(f) \circ \<\pi_2 \circ \pi_1, \pi_2 \circ \pi_2\> + D(f)
    \circ \<\pi_1 \circ \pi_2, \pi_2 \circ \pi_2\>, f \circ \pi_2
    \circ \pi_2\>
    \displaybreak[0]
    \\[2mm]
    ={} & \quad\textup{by \eqref{eq:circ-pair} and \eqref{eq:id-pair}}
    \\[2mm]
    & \<D(f) \circ \<\pi_2 \circ \pi_1, \pi_2 \circ \pi_2\> + D(f)
    \circ \pi_2, f\circ \pi_2 \circ \pi_2\>.
\end{align*}
On the other hand:
\begin{align*}
    & \mu_Y \circ T(T(f))
    \\[2mm]
    ={} & \quad\textup{by def. of $T$ and $\mu$}
    \\[2mm]
    & \<\pi_2 \circ \pi_1 + \pi_1 \circ \pi_2,
    \pi_2 \circ \pi_2\> \circ \<D(T(f)), T(f) \circ \pi_2\>
    \displaybreak[0]
    \\[2mm]
    ={} & \quad\textup{by \eqref{eq:circ-pair}}
    \\[2mm]
    & \<(\pi_2 \circ \pi_1 + \pi_1 \circ \pi_2) \circ \<D(T(f)),
    T(f) \circ \pi_2\>, \pi_2 \circ \pi_2 \circ \<D(T(f)), T(f) \circ
    \pi_2\>\>
    \displaybreak[0]
    \\[2mm]
    ={} & \quad\textup{by \eqref{eq:proj} and because $\C$ is left-additive}
    \\[2mm]
    & \< \pi_2\circ\pi_1\circ \< D(T(f)), T(f)\circ \pi_2\> +
    \pi_1\circ\pi_2\circ \< D(T(f)), T(f)\circ \pi_2\>, \pi_2\circ
    T(f) \circ \pi_2\>
    \displaybreak[0]
    \\[2mm]
    ={} & \quad\textup{by \eqref{eq:proj}}
    \\[2mm]
    & \<\pi_2 \circ D(T(f)) + \pi_1 \circ T(f) \circ \pi_2, \pi_2
    \circ T(f) \circ \pi_2\>
    \displaybreak[0]
    \\[2mm]
    ={} & \quad\textup{by def. of $T$, D4, and \eqref{eq:proj}}
    \\[2mm]
    & \<D(f\circ\pi_2) + D(f) \circ \pi_2, f\circ \pi_2 \circ \pi_2\>
    \displaybreak[0]
    \\[2mm]
    ={} & \quad\textup{by D5 and D3}
    \\[2mm]
    & \<D(f) \circ \<\pi_2 \circ \pi_1, \pi_2 \circ \pi_2\> + D(f)
    \circ \pi_2, f\circ \pi_2 \circ \pi_2\>.
  \end{align*}
  The obtained expressions are identical, hence the naturality of
  $\mu$.
\end{proof}

Let us prove that the triple $(T, \eta, \mu)$ is a monad on the
category $\C$.  The proof consists of checking the monad axioms.

\begin{lemma}\label{lem-left-unit}
  The natural transformations $\eta$ and $\mu$ satisfy the equation
  $\mu \circ \eta T = \id$.
\end{lemma}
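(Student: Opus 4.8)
The plan is to compute the single component $\mu_X \circ (\eta T)_X$ directly from the definitions and reduce it to $\langle \pi_1, \pi_2\rangle = \id$. Recall that $(\eta T)_X = \eta_{TX} = \langle 0, \id\rangle : TX\to TTX$, where now $0$ and $\id$ are the zero and identity endomorphisms of $TX$, and that $\mu_X = \langle \pi_2\circ\pi_1 + \pi_1\circ\pi_2, \pi_2\circ\pi_2\rangle$. As a sanity check, set-theoretically $\eta_{TX}(u,x) = ((0,0),(u,x))$, so $\mu_X((0,0),(u,x)) = (0+u, x) = (u,x)$, confirming that the composite should be the identity.

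First I would use \eqref{eq:circ-pair} to push $\langle 0, \id\rangle$ inside the pairing defining $\mu_X$, obtaining
\[
\mu_X\circ\eta_{TX} = \langle (\pi_2\circ\pi_1 + \pi_1\circ\pi_2)\circ\langle 0, \id\rangle,\ (\pi_2\circ\pi_2)\circ\langle 0, \id\rangle\rangle.
\]
Next I would simplify each component. For the second component, two applications of \eqref{eq:proj} give $\pi_2\circ\pi_2\circ\langle 0, \id\rangle = \pi_2\circ\id = \pi_2$. For the first component, left-additivity lets me distribute the precomposition over the sum, yielding $\pi_2\circ\pi_1\circ\langle 0, \id\rangle + \pi_1\circ\pi_2\circ\langle 0, \id\rangle$; then \eqref{eq:proj} reduces this to $\pi_2\circ 0 + \pi_1$.

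The only step requiring a little care is the vanishing $\pi_2\circ 0 = 0$: this is \emph{not} immediate from left-additivity, which only supplies $0\circ f = 0$ for precomposition, but it follows because $\pi_2$ is linear, hence additive, and additive morphisms satisfy $f\circ 0 = 0$. With this the first component collapses to $0 + \pi_1 = \pi_1$, and the whole composite becomes $\langle\pi_1, \pi_2\rangle$, which equals $\id$ by \eqref{eq:id-pair}. I expect no genuine obstacle: the calculation is short, the only subtlety being to invoke additivity of the projection rather than mere left-additivity when discarding the zero summand.
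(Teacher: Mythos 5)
Your proof is correct and follows essentially the same route as the paper's: expand $\mu_X\circ\eta_{TX}$ via \eqref{eq:circ-pair}, distribute the sum by left-additivity, evaluate with \eqref{eq:proj}, discard $\pi_2\circ 0$ by the additivity of $\pi_2$, and conclude with \eqref{eq:id-pair}. Your explicit caveat that $\pi_2\circ 0=0$ requires additivity of $\pi_2$ rather than mere left-additivity is exactly the justification the paper gives (and note that projections are additive already by the definition of a cartesian left-additive category, so the detour through linearity, while valid, is not needed).
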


\begin{proof}
  We need to show that for each object $X$ of $\C$ holds
  $\mu_X\circ\eta_{TX}=\id_{TX}$.  We have:
  \begin{alignat*}{3}
    \mu_X \circ \eta_{TX}
    &= \<\pi_2\circ\pi_1 + \pi_1\circ\pi_2,
         \pi_2\circ\pi_2\>\circ\<0, \id\>
    & \qquad & \textup{by def. of $\eta$ and $\mu$}
    \displaybreak[0]
    \\
    &=\<(\pi_2\circ\pi_1 + \pi_1\circ\pi_2)\circ\<0, \id\>,
        \pi_2\circ\pi_2\circ\<0, \id\>\>
    & \qquad & \textup{by \eqref{eq:circ-pair}}
    \displaybreak[0]
    \\
    &=\<\pi_2\circ\pi_1\circ\<0,\id\> +
        \pi_1\circ\pi_2\circ\<0,\id\>,
        \pi_2\circ\pi_2\circ\<0,\id\>\>
    & \qquad & \textup{because $\C$ is left-additive}
    \displaybreak[0]
    \\
    &=\<\pi_2\circ 0 + \pi_1\circ\id, \pi_2\circ\id\>
    & \qquad & \textup{by \eqref{eq:proj}}
    \displaybreak[0]
    \\
    &=\<\pi_1, \pi_2\>
    & \qquad & \textup{because $\pi_2$ is additive}
    \displaybreak[0]
    \\
    &=\id
    & \qquad & \textup{by \eqref{eq:id-pair}.}
  \end{alignat*}
  The lemma is proven.
\end{proof}

\begin{lemma}\label{lem-right-unit}
  The natural transformations $\eta$ and $\mu$ satisfy the equation
  $\mu \circ T\eta = \id$.
\end{lemma}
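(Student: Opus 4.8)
The plan is to mirror the proof of \lemref{lem-left-unit}, with the crucial difference that the relevant component of $\eta$ is now $T(\eta_X)$ rather than $\eta_{TX}$.  Recall that $T\eta$ denotes left whiskering, so $(T\eta)_X = T(\eta_X) : TX\to TTX$; I must therefore first understand this morphism before composing it with $\mu_X$, being careful not to confuse $T\eta$ with the $\eta T$ of the previous lemma.

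First I would exploit linearity.  Since $\eta_X = \<0, \id\>$ is linear, \lemref{lemma-T-additive-morphisms} gives $T(\eta_X) = \eta_X\times\eta_X$.  This is the key simplification: it lets me avoid computing $D(\eta_X)$ explicitly and replaces $T(\eta_X)$ by a product of two copies of $\eta_X$, which interacts cleanly with the projections appearing in the definition of $\mu_X$.

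Next I would compute $\mu_X\circ(\eta_X\times\eta_X)$ directly.  Writing $\mu_X = \<\pi_2\circ\pi_1 + \pi_1\circ\pi_2, \pi_2\circ\pi_2\>$ and using \eqref{eq:circ-pair} to push the composition inside the pairing, then left-additivity to distribute the composition over the sum in the first component, I reduce everything to expressions of the form $\pi_i\circ\pi_j\circ(\eta_X\times\eta_X)$.  The defining property \eqref{eq:times-def} of the product turns $\pi_1\circ(\eta_X\times\eta_X)$ into $\eta_X\circ\pi_1$ and $\pi_2\circ(\eta_X\times\eta_X)$ into $\eta_X\circ\pi_2$, after which the identities $\pi_1\circ\eta_X = 0$ and $\pi_2\circ\eta_X = \id$ (both instances of \eqref{eq:proj}) collapse the three relevant terms to $\pi_1$, $0$, and $\pi_2$ respectively.

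Finally, the first component of the resulting pairing is $\pi_1 + 0 = \pi_1$ and the second is $\pi_2$, so $\mu_X\circ T(\eta_X) = \<\pi_1, \pi_2\> = \id$ by \eqref{eq:id-pair}.  There is no genuine obstacle here; the only point requiring care is to recognize the linearity shortcut at the outset, since otherwise one is forced to compute $D(\eta_X) = \eta_X\circ\pi_1$ separately and carry the extra bookkeeping through to the end.
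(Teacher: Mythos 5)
Your proof is correct and follows essentially the same route as the paper's: both exploit the linearity of $\eta_X$ via \lemref{lemma-T-additive-morphisms} to replace $T(\eta_X)$ by $\eta_X\times\eta_X$, then reduce $\mu_X\circ(\eta_X\times\eta_X)$ to $\<\pi_1,\pi_2\>=\id$ using \eqref{eq:circ-pair}, left-additivity, \eqref{eq:times-def}, \eqref{eq:proj}, and \eqref{eq:id-pair}. The only step left implicit in your write-up is that collapsing $0\circ\pi_2$ to $0$ uses left-additivity, which is harmless.
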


\begin{proof}
  We need to show that for each object $X$ of $\C$ holds $\mu_X\circ
  T(\eta_X) = \id_{TX}$.  Since $\eta_X$ is linear, it follows by
  \lemref{lemma-T-additive-morphisms} that
  $T(\eta_X)=\eta_X\times\eta_X$.  We have:
  \begin{alignat*}{3}
    \mu_X\circ T(\eta_X)
    &= \<\pi_2\circ\pi_1 + \pi_1\circ\pi_2,
         \pi_2\circ\pi_2\> \circ (\eta\times\eta)
    & \qquad & \textup{by def. of $\mu$}
    \displaybreak[0]
    \\
    &= \<(\pi_2\circ\pi_1 + \pi_1\circ\pi_2) \circ (\eta\times\eta),
         \pi_2\circ\pi_2\circ(\eta\times\eta)\>
    & \qquad & \textup{by \eqref{eq:circ-pair}}
    \displaybreak[0]
    \\
    &= \<\pi_2\circ\pi_1\circ(\eta\times\eta) +
         \pi_1\circ\pi_2\circ(\eta\times\eta),
         \pi_2\circ\pi_2\circ(\eta\times\eta)\>
    & \qquad & \textup{because $\C$ is left-additive}
    \displaybreak[0]
    \\
    &= \<\pi_2\circ\eta\circ\pi_1 + \pi_1\circ\eta\circ\pi_2,
         \pi_2\circ\eta\circ\pi_2\>
    & \qquad & \textup{by \eqref{eq:times-def}}
    \displaybreak[0]
    \\
    &= \<\id\circ\pi_1 + 0\circ\pi_2, \id\circ\pi_2\>
    & \qquad & \textup{by def. of $\eta$ and \eqref{eq:proj}}
    \displaybreak[0]
    \\
    &= \<\pi_1, \pi_2\>
    & \qquad & \textup{because $\C$ is left-additive}
    \displaybreak[0]
    \\
    &= \id
    & \qquad & \textup{by \eqref{eq:id-pair}}.
  \end{alignat*}
  The lemma is proven.
\end{proof}

\begin{lemma}\label{lem-assoc}
  The natural transformation $\mu$ satisfies the equation $\mu\circ
  \mu T = \mu \circ T\mu$.
\end{lemma}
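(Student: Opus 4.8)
The plan is to fix an object $X$ and reduce each of the two composites $\mu_X\circ\mu_{TX}$ and $\mu_X\circ T(\mu_X)$ separately to one and the same morphism $TTTX\to TX$, written purely in terms of the projections of the eightfold product $TTTX=\bigl((X\times X)\times(X\times X)\bigr)\times\bigl((X\times X)\times(X\times X)\bigr)$. Set-theoretically, a generic point $(((a,b),(c,d)),((e,f),(g,h)))$ is sent by both sides to $(d+f+g,h)$, so the common target is the morphism
\[
\<\pi_2\circ\pi_2\circ\pi_1 + \pi_2\circ\pi_1\circ\pi_2 + \pi_1\circ\pi_2\circ\pi_2,\ \pi_2\circ\pi_2\circ\pi_2\>.
\]
Matching each side against this expression is the whole content of the proof, and it uses only the cartesian left-additive structure together with the combinator laws; no axiom of $D$ intervenes, because the one occurrence of $T$ applied to a non-identity will be removed by linearity.

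For the left-hand composite I would first apply \eqref{eq:circ-pair} to push $\mu_{TX}$ inside the defining pairing of $\mu_X$, then use left-additivity to distribute the sum $\pi_2\circ\pi_1+\pi_1\circ\pi_2$ over the composite. Each resulting term of the form $\pi_i\circ\pi_j\circ\mu_{TX}$ then simplifies: using \eqref{eq:proj} against the defining pairing of $\mu_{TX}$ recovers its components $\pi_2\circ\pi_1+\pi_1\circ\pi_2$ and $\pi_2\circ\pi_2$ (now read at the level of $TTTX$), and one further application of left-additivity and \eqref{eq:proj} reduces everything to the coordinate projections above. The only care required is to keep track of which copy of $\pi_1,\pi_2$ acts at which level of the nested products.

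For the right-hand composite I would exploit that $\mu_X$ is linear, as noted immediately after its definition. By \lemref{lemma-T-additive-morphisms} this gives $T(\mu_X)=\mu_X\times\mu_X$, so the entire computation stays inside the cartesian left-additive structure and no differentiation is needed. I would then simplify $\mu_X\circ(\mu_X\times\mu_X)$ by \eqref{eq:circ-pair}, replacing $\pi_i\circ(\mu_X\times\mu_X)$ by $\mu_X\circ\pi_i$ via \eqref{eq:times-def}, computing $\pi_i\circ\mu_X$ through \eqref{eq:proj}, and distributing the remaining sum by left-additivity. This again lands on the coordinate expression displayed above.

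The one genuine subtlety is that the two reductions deliver the first coordinate of the target in different bracketings: the left-hand side produces it as $(u+v)+w$ and the right-hand side as $u+(v+w)$, where $u=\pi_2\circ\pi_2\circ\pi_1$, $v=\pi_2\circ\pi_1\circ\pi_2$, and $w=\pi_1\circ\pi_2\circ\pi_2$. These agree because each homset is a commutative monoid, so addition of morphisms is associative; this is the sole place where left-additivity is used in an essential rather than merely distributive way. Everything else is careful but mechanical combinator algebra, and the main practical obstacle is simply organizing the nested projections of $TTTX$ without bookkeeping errors.
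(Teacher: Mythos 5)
Your proposal is correct and follows essentially the same route as the paper's proof: both sides are reduced, via \eqref{eq:circ-pair}, \eqref{eq:proj}, \eqref{eq:times-def}, left-additivity, and additivity of $\pi_2$, to the common normal form $\<\pi_2\circ\pi_2\circ\pi_1 + \pi_2\circ\pi_1\circ\pi_2 + \pi_1\circ\pi_2\circ\pi_2,\ \pi_2\circ\pi_2\circ\pi_2\>$, with the right-hand side handled exactly as you describe, by linearity of $\mu_X$ and \lemref{lemma-T-additive-morphisms} giving $T(\mu_X)=\mu_X\times\mu_X$. Your observation that the two sides produce the first coordinate in different bracketings, reconciled by associativity of the commutative monoid on homsets, accurately reflects the implicit final step of the paper's computation.
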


\begin{proof}
  We need to show that for each object $X$ of $\C$ holds
  $\mu_X\circ\mu_{TX} = \mu_X\circ T(\mu_X)$.  We have:
  \begin{align*}
    & \mu_X\circ\mu_{TX}
    \\[2mm]
    ={} & \quad\textup{by def. of $\mu$}
    \\[2mm]
    & \<\pi_2\circ\pi_1 + \pi_1\circ\pi_2, \pi_2\circ\pi_2\> \circ
      \<\pi_2\circ\pi_1 + \pi_1\circ\pi_2, \pi_2\circ\pi_2\>
    \displaybreak[0]
    \\[2mm]
    ={} & \quad\textup{by \eqref{eq:circ-pair}}
    \\[2mm]
    & \<(\pi_2\circ\pi_1 + \pi_1\circ\pi_2) \circ \<\pi_2\circ\pi_1 +
    \pi_1\circ\pi_2, \pi_2\circ\pi_2\>, \pi_2\circ\pi_2 \circ
    \<\pi_2\circ\pi_1 + \pi_1\circ\pi_2, \pi_2\circ\pi_2\>\>
    \displaybreak[0]
    \\[2mm]
    ={} & \quad\textup{because $\C$ is left-additive}
    \\[2mm]
    & \<\pi_2\circ\pi_1\circ \<\pi_2\circ\pi_1 + \pi_1\circ\pi_2,
    \pi_2\circ\pi_2\> + \pi_1\circ\pi_2\circ \<\pi_2\circ\pi_1 +
    \pi_1\circ\pi_2, \pi_2\circ\pi_2\>,
    \\
    & \hskip0.4em\pi_2\circ\pi_2 \circ \<\pi_2\circ\pi_1 +
    \pi_1\circ\pi_2, \pi_2\circ\pi_2\>\>
    \displaybreak[0]
    \\[2mm]
    ={} & \quad\textup{by \eqref{eq:proj}}
    \\[2mm]
    & \<\pi_2\circ(\pi_2\circ\pi_1 + \pi_1\circ\pi_2) +
    \pi_1\circ\pi_2\circ\pi_2, \pi_2\circ\pi_2\circ\pi_2\>
    \displaybreak[0]
    \\[2mm]
    ={} & \quad\textup{because $\pi_2$ is additive}
    \\[2mm]
    & \<\pi_2\circ\pi_2\circ\pi_1 + \pi_2\circ\pi_1\circ\pi_2 +
    \pi_1\circ\pi_2\circ\pi_2, \pi_2\circ\pi_2\circ\pi_2\>.
    \intertext{%
      On the other hand, because $\mu_X$ is linear and
      consequently $T(\mu_X)=\mu_X\times\mu_X$ by
      \lemref{lemma-T-additive-morphisms}, we have:
    }%
    & \mu_X\circ
    T(\mu_X)
    \\[2mm]
    ={} & \quad\textup{by def. of $\mu$}
    \\[2mm]
    & \<\pi_2\circ\pi_1 + \pi_1\circ\pi_2, \pi_2\circ\pi_2\> \circ
    T(\mu_X)
    \displaybreak[0]
    \\[2mm]
    ={} & \quad\textup{by \eqref{eq:circ-pair} and because $\C$ is
      left-additive}
    \\[2mm]
    & \<\pi_2\circ\pi_1\circ T(\mu_X) + \pi_1\circ\pi_2\circ T(\mu_X),
    \pi_2\circ\pi_2\circ T(\mu_X)\>
    \displaybreak[0]
    \\[2mm]
    ={} & \quad\textup{by \eqref{eq:times-def} and because
      $T(\mu_X)=\mu_X\times\mu_X$}
    \\[2mm]
    & \<\pi_2\circ\mu_X\circ\pi_1 + \pi_1\circ\mu_X\circ\pi_2,
    \pi_2\circ\mu_X\circ\pi_2\>
    \displaybreak[0]
    \\[2mm]
    ={} & \quad\textup{by def. of $\mu_X$ and \eqref{eq:proj}}
    \\[2mm]
    & \<\pi_2\circ\pi_2\circ\pi_1 + (\pi_2\circ\pi_1 +
    \pi_1\circ\pi_2)\circ\pi_2, \pi_2\circ\pi_2\circ\pi_2\>
    \displaybreak[0]
    \\[2mm]
    ={} & \quad\textup{because $\C$ is left-additive}
    \\[2mm]
    & \<\pi_2\circ\pi_2\circ\pi_1 + \pi_2\circ\pi_1\circ\pi_2 +
    \pi_1\circ\pi_2\circ\pi_2, \pi_2\circ\pi_2\circ\pi_2\>,
  \end{align*}
  which coincides with the expression we obtained above for
  $\mu_X\circ\mu_{TX}$.  The lemma is proven.
\end{proof}

\begin{theorem}
  The triple $(T, \eta, \mu)$ is a monad on the category $\C$.
\end{theorem}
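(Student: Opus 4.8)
The plan is straightforward: a monad on $\C$ is precisely a triple consisting of an endofunctor together with two natural transformations $\eta: \Id\to T$ and $\mu: TT\to T$ satisfying the left unit, right unit, and associativity laws. All of the ingredients have already been assembled in the preceding development, so the proof of the theorem amounts to collecting them.

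First I would recall that $T$ has been shown to be a functor, and that $\eta$ and $\mu$ have been verified to be natural transformations $\Id\to T$ and $TT\to T$ respectively in the two lemmas immediately following their definition. These establish that the data $(T,\eta,\mu)$ is of the correct type to be a monad.

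It then remains only to invoke the three coherence laws. The left unit law $\mu\circ\eta T=\id$ is \lemref{lem-left-unit}, the right unit law $\mu\circ T\eta=\id$ is \lemref{lem-right-unit}, and the associativity law $\mu\circ\mu T=\mu\circ T\mu$ is \lemref{lem-assoc}. Since these are exactly the defining axioms of a monad, the triple $(T,\eta,\mu)$ is a monad on $\C$, which completes the proof.

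There is no genuine obstacle at this stage: all the computational work was done in the lemmas, each of which was proven by unfolding the definitions of $\eta$ and $\mu$ and reducing via the pairing identities \eqref{eq:proj}--\eqref{eq:circ-pair} together with left-additivity and the additivity of the projections. The only thing to be careful about is that the right unit and associativity arguments rely on the linearity of $\eta_X$ and $\mu_X$ (noted when they were defined) so that \lemref{lemma-T-additive-morphisms} can be applied to rewrite $T(\eta_X)=\eta_X\times\eta_X$ and $T(\mu_X)=\mu_X\times\mu_X$; but this too has already been handled inside the respective lemmas, so the theorem follows immediately.
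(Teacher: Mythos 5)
Your proposal is correct and follows exactly the paper's route: the paper's proof is likewise a one-line citation of Lemmas~\ref{lem-left-unit}, \ref{lem-right-unit}, and \ref{lem-assoc}, with the functoriality of $T$ and the naturality of $\eta$ and $\mu$ established in the preceding lemmas. Your additional remark about the role of linearity and \lemref{lemma-T-additive-morphisms} in the right unit and associativity computations accurately reflects how those lemmas were proved.
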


\begin{proof}
  Follows from Lemmas~\ref{lem-left-unit}, \ref{lem-right-unit},
  \ref{lem-assoc}.
\end{proof}

\subsection{The tensorial strength of $T$}
\label{sec-tensorial-strength}

We recall (e.g., from \citep[Definition~3.2]{MR1115262}) that a monad
$(T, \eta, \mu)$ is \emph{strong} if it is equipped with a
\emph{tensorial strength}, a natural transformation $t_{X, Y} :
X\times TY \to T(X\times Y)$, such that the diagrams
\begin{minipage}[b]{0.5\textwidth}
\begin{equation}
  \label{eq:strength-terminal}
  \begin{xy}
    (-20, -9)*+{\unito\times TX}="1"; (20, -9)*+{T(\unito\times
      X)}="2"; (0, 9)*+{TX}="3"; {\ar@{->}^-{t_{\unito,X}} "1";"2"};
    {\ar@{->}_-{\ell_{TX}} "3";"1"}; {\ar@{->}^-{T(\ell_X)}
      "3";"2"};
  \end{xy}
\end{equation}
\begin{equation}
  \label{eq:strength-assoc}
  \hspace{-0.7em}
  \begin{xy}
    (-20, 18)*+{(X\times Y)\times TZ}="1"; (20, 18)*+{T((X\times
      Y)\times Z)}="2"; (-20, 0)*+{X\times (Y\times TZ)}="3"; (-20,
    -18)*+{X\times T(Y\times Z)}="4"; (20, -18)*+{T(X\times (Y\times
      Z))}="5"; {\ar@{->}^-{t_{X\times Y, Z}} "1";"2"};
    {\ar@{->}_-{a_{X,Y,TZ}} "1";"3"}; {\ar@{->}_-{\id_X\times
        t_{Y,Z}} "3";"4"}; {\ar@{->}^-{T(a_{X,Y,Z})} "2";"5"};
    {\ar@{->}^-{t_{X,Y\times Z}} "4";"5"};
  \end{xy}
\end{equation}
\end{minipage}
\begin{minipage}[b]{0.5\textwidth}
\begin{equation}
  \label{eq:strength-unit}
  \begin{xy}
    (-20, 9)*+{X\times Y}="1"; (20, 9)*+{X\times TY}="2"; (0,
    -9)*+{T(X\times Y)}="3"; {\ar@{->}^-{\id_X\times\eta_Y}
      "1";"2"}; {\ar@{->}_-{\eta_{X\times Y}} "1";"3"};
    {\ar@{->}^-{t_{X,Y}} "2";"3"};
  \end{xy}
\end{equation}
\begin{equation}
  \label{eq:strength-mult}
  \begin{xy}
    (-20, 18)*+{X\times TTY}="1"; (20, 18)*+{T(X\times TY)}="2";
    (-20, -18)*+{X\times TY}="3"; (20, 0)*+{TT(X\times Y)}="4"; (20,
    -18)*+{T(X\times Y)}="5"; {\ar@{->}^-{t_{X,TY}} "1";"2"};
    {\ar@{->}_-{\id_X\times\mu_Y} "1";"3"}; {\ar@{->}^-{T(t_{X,Y})}
      "2";"4"}; {\ar@{->}^-{\mu_{X\times Y}} "4";"5"};
    {\ar@{->}^-{t_{X,Y}} "3";"5"};
  \end{xy}
\end{equation}
\end{minipage}
commute.  The prominence of strong monads in functional programming
has become apparent after the seminal work of \citet{MR1115262} on
computational $\lambda$-calculi, in which he suggested strong monads
as an appropriate way to model computations.

We are going to show that the tangent bundle monad $(T, \eta, \mu)$ is
strong.  The tensorial strength $t$ is defined as follows.  For any
pair of objects $X$ and $Y$ of $\C$, denote by $t_{X,Y}$ the morphism
\begin{equation}
  \label{eq:strength-def}
  \<\<0, \pi_1\circ\pi_2\>, \<\pi_1, \pi_2\circ\pi_2\>\> : X\times TY =
  X\times (Y\times Y) \to (X\times Y) \times (X\times Y) = T(X\times Y).
\end{equation}
By \eqref{eq:times-def} and the left-additivity of $\C$, we can also
write $t_{X,Y}$ as $\<0\times\pi_1,\id_X\times\pi_2\>$.
Set-theoretically, $t_{X,Y}(x, (y', y)) = ((0, y'), (x, y))$.
Intuitively, $t_{X,Y}$ assigns to a point $x\in X$ and a tangent
vector $y'\in Y$ at a point $y\in Y$ the vector $y'$ viewed as the
tangent vector to $X\times Y$ at the point $(x,y)$ whose component
along the ``$X$ axis'' is zero.

\begin{lemma}
  $t$ is a natural transformation.
\end{lemma}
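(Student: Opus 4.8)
The plan is to verify the naturality square directly by the combinator calculus. Since $t_{X,Y} : X \times TY \to T(X\times Y)$ is natural in both arguments, for morphisms $f : X\to X'$ and $g : Y\to Y'$ I must show
\[
  t_{X',Y'}\circ(f\times T(g)) = T(f\times g)\circ t_{X,Y}.
\]
Both sides are morphisms $X\times TY\to T(X'\times Y')$, and the strategy is to reduce each to a normal form built from projections, compose out, and compare the two coordinates of the outer pairing separately.

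First I would expand the left-hand side using the form $t_{X',Y'} = \<\<0,\pi_1\circ\pi_2\>, \<\pi_1, \pi_2\circ\pi_2\>\>$. Applying \eqref{eq:circ-pair} to push $f\times T(g)$ through the two layers of pairing, together with \eqref{eq:times-def} (so that $\pi_2\circ(f\times T(g)) = T(g)\circ\pi_2$) and the identities $\pi_1\circ T(g) = D(g)$ and $\pi_2\circ T(g) = g\circ\pi_2$ coming from the definition of $T$ and \eqref{eq:proj}, the left-hand side should collapse to
\[
  \<\<0, D(g)\circ\pi_2\>, \<f\circ\pi_1, g\circ\pi_2\circ\pi_2\>\>.
\]
The only subtlety here is the vanishing of $0\circ(f\times T(g))$, which holds because $\C$ is left-additive.

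For the right-hand side I would write $T(f\times g) = \<D(f\times g), (f\times g)\circ\pi_2\>$ and split the computation into its two coordinates via \eqref{eq:circ-pair}. The second coordinate is routine: $\pi_2\circ t_{X,Y} = \<\pi_1,\pi_2\circ\pi_2\>$ by \eqref{eq:proj}, and then $(f\times g)\circ\<\pi_1,\pi_2\circ\pi_2\> = \<f\circ\pi_1, g\circ\pi_2\circ\pi_2\>$ by \eqref{eq:times-circ-pair}, matching the left-hand side. The first coordinate, $D(f\times g)\circ t_{X,Y}$, is where the real work lies and is the step I expect to be the main obstacle. I would substitute the formula of \lemref{lem-D-times} for $D(f\times g)$, distribute $t_{X,Y}$ across the resulting pairing by \eqref{eq:circ-pair}, and simplify the two precompositions: one checks that $\<\pi_1\circ\pi_1,\pi_1\circ\pi_2\>\circ t_{X,Y} = \<0,\pi_1\>$ and $\<\pi_2\circ\pi_1,\pi_2\circ\pi_2\>\circ t_{X,Y} = \<\pi_1\circ\pi_2,\pi_2\circ\pi_2\> = \pi_2$, the latter using \eqref{eq:circ-pair} and \eqref{eq:id-pair}. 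The decisive point is then to invoke axiom D2, which gives $D(f)\circ\<0,\pi_1\> = 0$, killing the $X$-component of the derivative; what survives is exactly $\<0, D(g)\circ\pi_2\>$. This agrees with the first coordinate of the left-hand side, so the two composites coincide and naturality follows.
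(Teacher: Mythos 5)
Your proposal is correct and follows essentially the same route as the paper's own proof: both sides are expanded with \eqref{eq:circ-pair}, the left-hand side reduces to $\<\<0, D(g)\circ\pi_2\>, \<f\circ\pi_1, g\circ\pi_2\circ\pi_2\>\>$, and the key first coordinate of the right-hand side is handled exactly as in the paper, via \lemref{lem-D-times}, the simplifications $\<\pi_1\circ\pi_1,\pi_1\circ\pi_2\>\circ t_{X,Y}=\<0,\pi_1\>$ and $\<\pi_2\circ\pi_1,\pi_2\circ\pi_2\>\circ t_{X,Y}=\pi_2$, and axiom D2 to kill $D(f)\circ\<0,\pi_1\>$. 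No gaps; this is the paper's argument in outline form.
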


\begin{proof}
  We need to show that for any morphisms $f: X\to U$ and $g: Y\to V$,
  the following diagram commutes:
  \[
  \begin{xy}
    (-20, 9)*+{X\times TY}="1";
    (20, 9)*+{T(X\times Y)}="2";
    (-20, -9)*+{U\times TV}="3";
    (20, -9)*+{T(U\times V)}="4";
    {\ar@{->}^-{t_{X,Y}} "1";"2"};
    {\ar@{->}_-{f\times T(g)} "1";"3"};
    {\ar@{->}^-{T(f\times g)} "2";"4"};
    {\ar@{->}^-{t_{U,V}} "3";"4"};
  \end{xy}
  \]
  We have, on the one hand:
  \begin{align*}
    & t_{U,V}\circ (f\times T(g))
    \\[2mm]
    ={} & \quad\textup{by def. of $t$}
    \\[2mm]
    & \<\<0, \pi_1\circ\pi_2\>, \<\pi_1,
    \pi_2\circ\pi_2\>\>\circ(f\times T(g))
    \displaybreak[0]
    \\[2mm]
    ={} & \quad\textup{by \eqref{eq:circ-pair}}
    \\[2mm]
    & \<\<0\circ (f\times T(g)), \pi_1\circ\pi_2\circ (f\times
    T(g))\>, \<\pi_1\circ(f\times T(g)), \pi_2\circ\pi_2\circ(f\times
    T(g))\>\>
    \displaybreak[0]
    \\[2mm]
    ={} & \quad\textup{by \eqref{eq:times-pair} and because $\C$ is
      left-additive}
    \\[2mm]
    & \<\<0, \pi_1\circ T(g)\circ \pi_2\>, \<f\circ\pi_1, \pi_2\circ
    T(g) \circ\pi_2\>\>
    \displaybreak[0]
    \\[2mm]
    ={} & \quad\textup{by def. of $T$}
    \\[2mm]
    & \<\<0, D(g)\circ \pi_2\>, \<f\circ\pi_1, g\circ\pi_2\circ\pi_2\>\>.
  \end{align*}
  On the other hand:
  \begin{align*}
    & T(f\times g) \circ t_{X, Y}
    \\[2mm]
    ={} & \quad\textup{by def. of $T$ and $t$}
    \\[2mm]
    & \<D(f\times g), (f\times g)\circ\pi_2\> \circ \<\<0,
    \pi_1\circ\pi_2\>, \<\pi_1, \pi_2\circ\pi_2\>\>
    \displaybreak[0]
    \\[2mm]
    ={} & \quad\textup{by \eqref{eq:circ-pair}}
    \\[2mm]
    & \<D(f\times g) \circ \<\<0, \pi_1\circ\pi_2\>, \<\pi_1,
    \pi_2\circ\pi_2\>\>, (f\times g)\circ \pi_2\circ \<\<0,
    \pi_1\circ\pi_2\>, \<\pi_1, \pi_2\circ\pi_2\>\>\>
    \displaybreak[0]
    \\[2mm]
    ={} & \quad\textup{by \eqref{eq:proj}}
    \\[2mm]
    & \<D(f\times g) \circ \<\<0, \pi_1\circ\pi_2\>, \<\pi_1,
    \pi_2\circ\pi_2\>\>, (f\times g)\circ \<\pi_1, \pi_2\circ\pi_2\>\>
    \displaybreak[0]
    \\[2mm]
    ={} & \quad\textup{by \eqref{eq:times-circ-pair}}
    \\[2mm]
    & \<D(f\times g) \circ \<\<0, \pi_1\circ\pi_2\>, \<\pi_1,
    \pi_2\circ\pi_2\>\>, \<f\circ \pi_1, g\circ\pi_2\circ\pi_2\>\>.
  \end{align*}
  Let us consider the first component of the last expression
  separately.  We need to show that it is equal to $\<0,
  D(g)\circ\pi_2\>$.  Indeed:
  \begin{align*}
    & D(f\times g)\circ \<\<0, \pi_1\circ\pi_2\>, \<\pi_1,
    \pi_2\circ\pi_2\>\>
    \\[2mm]
    ={} & \quad\textup{by \lemref{lem-D-times}}
    \\[2mm]
    & \<D(f) \circ \<\pi_1\circ\pi_1, \pi_1\circ\pi_2\> \circ \<\<0,
    \pi_1\circ\pi_2\>, \<\pi_1, \pi_2\circ\pi_2\>\>,
    \\
    & \hskip0.47em D(g) \circ \<\pi_2\circ\pi_1,
    \pi_2\circ\pi_2\>\circ \<\<0, \pi_1\circ\pi_2\>, \<\pi_1,
    \pi_2\circ\pi_2\>\>\>
    \displaybreak[0]
    \\[2mm]
    ={} & \quad\textup{by \eqref{eq:proj} and \eqref{eq:circ-pair}}
    \\[2mm]
    & \<D(f)\circ \<0, \pi_1\>, D(g) \circ\<\pi_1\circ\pi_2,
    \pi_2\circ\pi_2\>\>
    \displaybreak[0]
    \\[2mm]
    ={} & \quad\textup{by D2, \eqref{eq:circ-pair}, and
      \eqref{eq:id-pair}}
    \\[2mm]
    & \<0, D(g)\circ\pi_2\>,
  \end{align*}
  which completes the proof of the lemma.
\end{proof}

Let us check that $t$ is indeed a tensorial strength for the monad
$(T, \eta, \mu)$.  The proof consists of checking the commutativity of
diagrams~\eqref{eq:strength-terminal}--\eqref{eq:strength-mult}.

\begin{lemma}\label{lem-strength-terminal}
  $t$ makes diagram~\eqref{eq:strength-terminal} commute.
\end{lemma}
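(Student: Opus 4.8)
The triangle~\eqref{eq:strength-terminal} commutes precisely when $t_{\unito,X}\circ\ell_{TX}=T(\ell_X)$, so the plan is to reduce both sides independently to the common normal form $\<\<0,\pi_1\>,\<0,\pi_2\>\> : TX\to T(\unito\times X)$ and read off the equality. Throughout I would freely use the observation from the remark following the definition of cartesian left-additive category that any morphism into $\unito$ coincides with the corresponding $0$; that is, $!_X\circ\pi_i=0$ and $!_{TX}=0$.

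First I would treat the right-hand side. Since $\ell_X=\<!_X,\id_X\>$ is linear (noted in the discussion of linear morphisms), \lemref{lemma-T-additive-morphisms} gives $T(\ell_X)=\ell_X\times\ell_X$. Expanding with \eqref{eq:times-pair} and then \eqref{eq:circ-pair} yields $\ell_X\times\ell_X=\<\ell_X\circ\pi_1,\ell_X\circ\pi_2\>=\<\<!_X\circ\pi_1,\pi_1\>,\<!_X\circ\pi_2,\pi_2\>\>$, and replacing the terminal maps $!_X\circ\pi_i$ by $0$ turns this into $\<\<0,\pi_1\>,\<0,\pi_2\>\>$.

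Next I would expand the left-hand side. Unfolding the definition~\eqref{eq:strength-def} of $t_{\unito,X}$ and $\ell_{TX}=\<!_{TX},\id_{TX}\>$, I would push the composition inside the two nested pairings by repeated use of \eqref{eq:circ-pair}. For the first component this uses $0\circ\ell_{TX}=0$ (left-additivity) and $\pi_2\circ\ell_{TX}=\id_{TX}$ from \eqref{eq:proj}, giving $\<0,\pi_1\>$; for the second it uses $\pi_1\circ\ell_{TX}=!_{TX}$ and again $\pi_2\circ\ell_{TX}=\id_{TX}$, giving $\<!_{TX},\pi_2\>$. Hence $t_{\unito,X}\circ\ell_{TX}=\<\<0,\pi_1\>,\<!_{TX},\pi_2\>\>$, and identifying $!_{TX}$ with $0$ produces exactly $\<\<0,\pi_1\>,\<0,\pi_2\>\>$, matching the right-hand side.

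The computation is entirely mechanical, so no genuine obstacle arises; the only thing requiring care is the bookkeeping of the several distinct copies of $0$ and $!$ and of which factor of $X$ each projection lands in. Keeping track of the codomains so that the silent passage between $!$ and $0$ is legitimate is the sole point at which a slip could occur.
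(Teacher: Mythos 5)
Your proposal is correct and follows essentially the same route as the paper: both expand $t_{\unito,X}\circ\ell_{TX}$ via \eqref{eq:circ-pair}, compute $T(\ell_X)=\ell_X\times\ell_X$ from the linearity of $\ell_X$ and \lemref{lemma-T-additive-morphisms}, and close the gap by identifying morphisms into $\unito$ with $0$. The only cosmetic difference is that you normalize both sides fully to $\<\<0,\pi_1\>,\<0,\pi_2\>\>$, whereas the paper stops at $\<\<0,\pi_1\>,\<!_{TX},\pi_2\>\>$ and $\<\<!_{X\times X},\pi_1\>,\<!_{X\times X},\pi_2\>\>$ and invokes $0={!}$ once at the end.
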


\begin{proof}
  We have:
  \begin{alignat*}{3}
    t_{\unito, X}\circ \ell_{TX}
    &= \<\<0, \pi_1\circ\pi_2\>, \<\pi_1, \pi_2\circ\pi_2\>\>
       \circ \<!_{TX},\id_{TX}\>
    & \qquad & \textup{by def. of $t_{\unito, X}$ and $\ell_{TX}$}
    \displaybreak[0]
    \\
    &=\<\<0\circ\<!_{TX}, \id_{TX}\>,
        \pi_1\circ\pi_2\circ\<!_{TX}, \id_{TX}\>\>,
    \\
    &\hphantom{{}=\<}
      \<\pi_1\circ\<!_{TX}, \id_{TX}\>,
        \pi_2\circ\pi_2\circ\<!_{TX}, \id_{TX}\>\>
    & \qquad & \textup{by \eqref{eq:circ-pair}}
    \displaybreak[0]
    \\
    &=\<\<0, \pi_1\>, \<!_{TX}, \pi_2\>\>
    & \qquad & \textup{by \eqref{eq:proj} and because $\C$ is
      left-additive}
    \\
    \displaybreak[0]
    \\
    T(\ell_X)
    &= \ell_X\times\ell_X
    & \qquad & \textup{by \lemref{lemma-T-additive-morphisms} because
      $\ell_X$ is linear}
    \displaybreak[0]
    \\
    &= \<\<!_X, \id_X\> \circ\pi_1, \<!_X, \id_X\>\circ\pi_2\>
    & \qquad & \textup{by def. of $\ell_X$ and by
      \eqref{eq:times-pair}}
    \displaybreak[0]
    \\
    &= \<\<!_{X\times X}, \pi_1\>, \<!_{X\times X}, \pi_2\>\>
    & \qquad & \textup{by \eqref{eq:circ-pair}}.
  \end{alignat*}
  The equation follows because in a cartesian left-additive category
  $0 : X\times X\to \unito$ is necessarily equal to $!_{X\times X}$.
\end{proof}

\begin{lemma}\label{lem-strength-assoc}
  $t$ makes diagram~\eqref{eq:strength-assoc} commute.
\end{lemma}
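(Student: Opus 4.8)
The diagram to be verified asserts the equality
\[
T(a_{X,Y,Z})\circ t_{X\times Y,Z}
= t_{X,Y\times Z}\circ(\id_X\times t_{Y,Z})\circ a_{X,Y,TZ},
\]
both sides being morphisms $(X\times Y)\times TZ\to T(X\times(Y\times Z))$. The plan is to reduce each composite to one and the same explicit pairing of projections, exactly as in the proof that $t$ is natural. Set-theoretically both paths send $((x,y),(z',z))$ to $\bigl((0,(0,z')),(x,(y,z))\bigr)$, so I expect both sides to equal
\[
\bigl\langle\langle 0,\langle 0,\pi_1\circ\pi_2\rangle\rangle,
\langle\pi_1\circ\pi_1,\langle\pi_2\circ\pi_1,\pi_2\circ\pi_2\rangle\rangle\bigr\rangle.
\]

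A preliminary simplification makes the whole computation purely cartesian: since $a_{X,Y,Z}$ is linear, \lemref{lemma-T-additive-morphisms} gives $T(a_{X,Y,Z})=a_{X,Y,Z}\times a_{X,Y,Z}$, so no occurrence of $D$ survives on either side and I never need the differential axioms — only \eqref{eq:proj}--\eqref{eq:times-circ-pair}, the identities $0\circ h=0$ and $\pi_i\circ 0=0$ (left-additivity and additivity of projections), and the defining formula $a_{X,Y,Z}=\langle\pi_1\circ\pi_1,\langle\pi_2\circ\pi_1,\pi_2\rangle\rangle$.

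For the top-right path I would first expand $t_{X\times Y,Z}$ by \eqref{eq:strength-def} and compose with $a_{X,Y,Z}\times a_{X,Y,Z}$ using \eqref{eq:times-circ-pair}, obtaining
\[
\langle a_{X,Y,Z}\circ\langle 0,\pi_1\circ\pi_2\rangle,\;
a_{X,Y,Z}\circ\langle\pi_1,\pi_2\circ\pi_2\rangle\rangle.
\]
Then I substitute the formula for $a_{X,Y,Z}$ and reduce the two inner composites with \eqref{eq:circ-pair} and \eqref{eq:proj}; the first collapses to $\langle 0,\langle 0,\pi_1\circ\pi_2\rangle\rangle$ (here $\pi_1\circ 0=0$ and $\pi_2\circ 0=0$ kill the zeros), and the second to $\langle\pi_1\circ\pi_1,\langle\pi_2\circ\pi_1,\pi_2\circ\pi_2\rangle\rangle$, yielding the displayed target.

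For the bottom path I would work from the right, successively precomposing: expand $a_{X,Y,TZ}$, then $\id_X\times t_{Y,Z}$ (rewriting the product after unfolding $t_{Y,Z}$ via \eqref{eq:strength-def}), then $t_{X,Y\times Z}$, each step handled by the same combinator identities. The three substitutions are mechanical and terminate in the same pairing as above. The only genuine ``obstacle'' is the sheer depth of the nested pairings, so I would lay the computation out line by line, as in the earlier naturality proof, citing at each step exactly which of \eqref{eq:proj}--\eqref{eq:times-circ-pair} (together with $0\circ h=0$) is applied. Since both composites equal the common value displayed above, the diagram commutes.
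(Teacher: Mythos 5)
Your proposal is correct and takes essentially the same route as the paper: after replacing $T(a_{X,Y,Z})$ by $a_{X,Y,Z}\times a_{X,Y,Z}$ via \lemref{lemma-T-additive-morphisms}, both composites are reduced, using only \eqref{eq:proj}--\eqref{eq:times-circ-pair} together with left-additivity and additivity of projections, to the same pairing $\<\<0,\<0,\pi_1\circ\pi_2\>\>, \<\pi_1\circ\pi_1,\<\pi_2\circ\pi_1,\pi_2\circ\pi_2\>\>\>$, which is exactly the common value the paper's proof obtains. The remaining bottom-path computation you defer as ``mechanical'' does indeed terminate in that pairing, so the argument is complete in substance.
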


\begin{proof}
  We have, on the one hand:
  \begin{align*}
    & T(a_{X, Y, Z})\circ t_{X\times Y, Z}
    \\[2mm]
    ={} & \quad\textup{by \lemref{lemma-T-additive-morphisms}
      because $a_{X,Y,Z}$ is linear}
    \\[2mm]
    &(a_{X, Y, Z}\times a_{X, Y, Z})\circ t_{X\times Y, Z}
    \displaybreak[0]
    \\[2mm]
    ={} & \quad\textup{by def. of $t_{X\times Y, Z}$ and by
      \eqref{eq:times-circ-pair}}
    \\[2mm]
    & \<a_{X,Y,Z}\circ \<0, \pi_1\circ\pi_2\>, a_{X,Y,Z}\circ\<\pi_1,
    \pi_2\circ\pi_2\>\>
    \displaybreak[0]
    \\[2mm]
    ={} & \quad\textup{by def. of $a_{X,Y,Z}$ and by
      \eqref{eq:circ-pair}}
    \\[2mm]
    & \<\<\pi_1\circ\pi_1\circ\<0, \pi_1\circ\pi_2\>,
    \<\pi_2\circ\pi_1\circ\<0, \pi_1\circ\pi_2\>, \pi_2\circ\<0,
    \pi_1\circ\pi_2\>\>\>,
    \\
    & \hphantom{\<}\<\pi_1\circ\pi_1\circ\<\pi_1, \pi_2\circ\pi_2\>,
    \<\pi_2\circ\pi_1\circ\<\pi_1, \pi_2\circ\pi_2\>,
    \pi_2\circ\<\pi_1, \pi_2\circ\pi_2\>\>\>\>
    \displaybreak[0]
    \\[2mm]
    ={} & \quad\textup{by \eqref{eq:proj} and because $\pi_1$ and
      $\pi_2$ are additive}
    \\[2mm]
    & \<\<0, \<0, \pi_1\circ\pi_2\>\>, \<\pi_1\circ\pi_1,
    \<\pi_2\circ\pi_1, \pi_2\circ\pi_2\>\>\>.  \intertext{On the other
      hand:} & t_{X, Y\times Z}\circ (\id_X\times t_{Y,Z})\circ
    a_{X,Y,TZ}
    \displaybreak[0]
    \\[2mm]
    ={} & \quad\textup{by def. of $t_{X, Y\times Z}$ and by
      \eqref{eq:circ-pair}}
    \\[2mm]
    & \<\< 0\circ (\id_X\times t_{Y,Z})\circ a_{X,Y,TZ}, \pi_1\circ
    \pi_2\circ (\id_X\times t_{Y,Z})\circ a_{X,Y,TZ}\>,
    \\
    &\hphantom{\<} \<\pi_1\circ (\id_X\times t_{Y,Z})\circ a_{X,Y,TZ},
    \pi_2\circ \pi_2\circ (\id_X\times t_{Y,Z})\circ a_{X,Y,TZ}\>\>
    \displaybreak[0]
    \\[2mm]
    ={} & \quad\textup{by \eqref{eq:times-def} and because $\C$ is
      left-additive}
    \\[2mm]
    & \< \<0, \pi_1\circ t_{Y,Z}\circ \pi_2\circ a_{X,Y,TZ}\>,
    \<\pi_1\circ a_{X,Y,TZ}, \pi_2\circ t_{Y,Z}\circ \pi_2\circ
    a_{X,Y,TZ}\>\>
    \displaybreak[0]
    \\[2mm]
    ={} & \quad\textup{by def. of $t_{Y,Z}$ and $a_{X,Y,TZ}$, and by
      \eqref{eq:proj}}
    \\[2mm]
    & \<\< 0, \<0, \pi_1\circ\pi_2\> \circ \<\pi_2\circ\pi_1,
    \pi_2\>\>, \<\pi_1\circ\pi_1, \<\pi_1, \pi_2\circ\pi_2\>\circ
    \<\pi_2\circ\pi_1, \pi_2\>\>\>
    \displaybreak[0]
    \\[2mm]
    ={} & \quad\textup{by \eqref{eq:circ-pair}}
    \\[2mm]
    & \<\<0, \<0 \circ \<\pi_2\circ\pi_1, \pi_2\>,
    \pi_1\circ\pi_2\circ\<\pi_2\circ\pi_1, \pi_2\>\>\>,
    \\
    & \hphantom{\<} \<\pi_1\circ\pi_1, \<\pi_1\circ\<\pi_2\circ\pi_1,
    \pi_2\>, \pi_2\circ\pi_2\circ\<\pi_2\circ\pi_1, \pi_2\>\>\>\>
    \displaybreak[0]
    \\[2mm]
    ={} & \quad\textup{by \eqref{eq:proj} and because $\C$ is
      left-additive}
    \\[2mm]
    & \<\<0, \<0, \pi_1\circ\pi_2\>\>, \<\<\pi_1\circ\pi_1, \<
    \pi_2\circ\pi_1, \pi_2\circ\pi_2\>\>\>,
  \end{align*}
  which coincides with the expression for $T(a_{X,Y,Z})\circ
  t_{X\times Y,Z}$.  The lemma is proven.
\end{proof}

\begin{lemma}\label{lem-strength-unit}
  $t$ makes diagram \eqref{eq:strength-unit} commute.
\end{lemma}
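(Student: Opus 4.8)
The plan is to verify the single equation $t_{X,Y}\circ(\id_X\times\eta_Y)=\eta_{X\times Y}$ encoded by the triangle, working purely with the combinator laws \eqref{eq:proj}--\eqref{eq:times-circ-pair} together with the left-additivity of $\C$. Since every morphism in sight is defined by explicit pairings, I expect the whole argument to reduce to pushing the composite $(\id_X\times\eta_Y)$ through the nested pairing defining $t_{X,Y}$ and then simplifying each resulting leaf.

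First I would unfold $t_{X,Y}=\<\<0,\pi_1\circ\pi_2\>,\<\pi_1,\pi_2\circ\pi_2\>\>$ and apply \eqref{eq:circ-pair} three times to distribute $(\id_X\times\eta_Y)$ over the outer pairing and the two inner pairings, bringing the composite next to each of the four leaf morphisms $0$, $\pi_1\circ\pi_2$, $\pi_1$, and $\pi_2\circ\pi_2$. Next I would simplify using \eqref{eq:times-def}: namely $\pi_1\circ(\id_X\times\eta_Y)=\pi_1$ and $\pi_2\circ(\id_X\times\eta_Y)=\eta_Y\circ\pi_2$, so the projections onto the second factor pick up a copy of $\eta_Y$, while $0\circ(\id_X\times\eta_Y)=0$ by left-additivity.

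At this stage the first component is $\<0,\pi_1\circ\eta_Y\circ\pi_2\>$ and the second is $\<\pi_1,\pi_2\circ\eta_Y\circ\pi_2\>$. Substituting $\eta_Y=\<0,\id\>$ and using \eqref{eq:proj} gives $\pi_1\circ\eta_Y=0$ and $\pi_2\circ\eta_Y=\id$, so the first block collapses to $\<0,0\>$ and the second to $\<\pi_1,\pi_2\>$. Finally, $\<0,0\>=0$ because pairing is additive and $\<\pi_1,\pi_2\>=\id$ by \eqref{eq:id-pair}, leaving $\<0,\id\>=\eta_{X\times Y}$ by the definition of $\eta$, which is exactly the desired right-hand side.

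There is no real obstacle here: the computation is short and mechanical. The only point deserving a moment's care is the collapse of the strength's first block, where the tangent-vector slot $\pi_1\circ\pi_2$ meets the zero tangent vector supplied by $\eta_Y$ while the structural zero $0$ meets $\id_X$, so that both coordinates vanish and the block becomes the zero morphism $\<0,0\>=0$ on $X\times Y$; recognizing this block as the first coordinate of $\eta_{X\times Y}=\<0,\id\>$ completes the proof.
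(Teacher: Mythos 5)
Your proof is correct and follows essentially the same route as the paper's: unfold $t_{X,Y}$, distribute $\id_X\times\eta_Y$ via \eqref{eq:circ-pair} and \eqref{eq:times-def}, collapse using $\eta_Y=\<0,\id\>$, \eqref{eq:proj}, left-additivity, additivity of pairing, and \eqref{eq:id-pair}. No gaps; the steps and cited equations match the paper's computation exactly.
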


\begin{proof}
  We have:
  \begin{align*}
    & t_{X,Y}\circ (\id_X\times\eta_Y)
    \\[2mm]
    ={} & \quad\textup{by def. of $t_{X,Y}$}
    \\[2mm]
    &\<\<0, \pi_1\circ\pi_2\>, \<\pi_1, \pi_2\circ\pi_2\>\> \circ
    (\id_X\times\eta_Y)
    \displaybreak[0]
    \\[2mm]
    ={} & \quad\textup{by \eqref{eq:circ-pair}}
    \\[2mm]
    & \<\<0 \circ (\id_X\times\eta_Y), \pi_1\circ\pi_2\circ
    (\id_X\times\eta_Y)\>, \<\pi_1\circ(\id_X\times\eta_Y),
    \pi_2\circ\pi_2\circ(\id_X\times\eta_Y)\>\>
    \displaybreak[0]
    \\[2mm]
    ={} & \quad\textup{by \eqref{eq:times-def} and because $\C$ is
      left-additive}
    \\[2mm]
    & \<\< 0, \pi_1\circ\eta_Y\circ\pi_2\>, \<\pi_1,
    \pi_2\circ\eta_Y\circ\pi_2\>\>
    \displaybreak[0]
    \\[2mm]
    ={} & \quad\textup{by def. of $\eta_Y$, by \eqref{eq:proj}, and
      because $\C$ is left-additive}
    \\[2mm]
    & \<\<0, 0\>, \<\pi_1, \pi_2\>\>
    \displaybreak[0]
    \\[2mm]
    ={} & \quad\textup{by \eqref{eq:id-pair} and because pairing is
      additive}
    \\[2mm]
    & \<0, \id\>
    \displaybreak[0]
    \\[2mm]
    ={} & \quad\textup{by def. of $\eta_{X\times Y}$}
    \\[2mm]
    & \eta_{X\times Y}.
  \end{align*}
  The lemma is proven.
\end{proof}

\begin{lemma}\label{lem-strength-mult}
  $t$ makes diagram \eqref{eq:strength-mult} commute.
\end{lemma}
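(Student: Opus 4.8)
The diagram \eqref{eq:strength-mult} commutes exactly when
\[
\mu_{X\times Y}\circ T(t_{X,Y})\circ t_{X,TY}
= t_{X,Y}\circ(\id_X\times\mu_Y),
\]
so the plan is to reduce each side, by the same combinator manipulations used in Lemmas~\ref{lem-strength-terminal}--\ref{lem-strength-unit}, to the single explicit pairing
\[
\<\<0,\ \pi_2\circ\pi_1\circ\pi_2 + \pi_1\circ\pi_2\circ\pi_2\>,\ \<\pi_1,\ \pi_2\circ\pi_2\circ\pi_2\>\>,
\]
and then to conclude that the two composites agree.

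The decisive simplification is that $t_{X,Y}$ is linear: it is a pairing whose entries are built from projections, composites of projections, and zero morphisms, all of which are linear, and linearity is closed under pairing and composition. Consequently \lemref{lemma-T-additive-morphisms} gives $T(t_{X,Y}) = t_{X,Y}\times t_{X,Y}$, so the left-hand composite can be handled without ever differentiating $t_{X,Y}$; the only use of the differential structure is hidden inside this linearity observation.

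For the right-hand side I would expand $t_{X,Y}$ by \eqref{eq:circ-pair}, push $\id_X\times\mu_Y$ through the projections with \eqref{eq:times-def}, substitute the definition of $\mu_Y$, and distribute over the sum using left-additivity; this lands directly on the target pairing above. For the left-hand side I would first rewrite $T(t_{X,Y})\circ t_{X,TY} = (t_{X,Y}\times t_{X,Y})\circ t_{X,TY}$ as a pairing of $t_{X,Y}\circ\<0,\pi_1\circ\pi_2\>$ and $t_{X,Y}\circ\<\pi_1,\pi_2\circ\pi_2\>$ via \eqref{eq:times-circ-pair}, reduce each of these to an explicit pairing of projections using \eqref{eq:circ-pair}, \eqref{eq:proj}, and left-additivity, and finally postcompose with $\mu_{X\times Y}$, expanding its definition and simplifying with \eqref{eq:circ-pair} and \eqref{eq:proj}.

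The argument is pure bookkeeping, and the one point demanding care is the final $\mu_{X\times Y}$ step. The morphisms live over the deeply nested products $X\times TTY$ and $TT(X\times Y) = ((X\times Y)\times(X\times Y))\times((X\times Y)\times(X\times Y))$, so one must keep track of which projection $\pi_1$, $\pi_2$ is meant at each type while unwinding the composites. The substance of the equality is that the first component $\pi_2\circ\pi_1 + \pi_1\circ\pi_2$ of $\mu_{X\times Y}$ selects one summand from each of the two $T(X\times Y)$-factors produced by $T(t_{X,Y})\circ t_{X,TY}$; since both selected terms have vanishing $X$-coordinate, the additivity of pairing lets them recombine into $\<0,\ \pi_2\circ\pi_1\circ\pi_2 + \pi_1\circ\pi_2\circ\pi_2\>$, matching the right-hand side (set-theoretically, the two tangent vectors over the base point $y$ are added, exactly as $\mu_Y$ acts on the $Y$-factor).
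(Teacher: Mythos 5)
Your proposal is correct and follows essentially the same route as the paper's proof: both sides are reduced to the same explicit pairing by pure combinator bookkeeping, with the decisive step in each case being that $t_{X,Y}$ is linear, so that $T(t_{X,Y}) = t_{X,Y}\times t_{X,Y}$ by \lemref{lemma-T-additive-morphisms}. The only difference is a trivial reordering of the expansions (you pass through \eqref{eq:times-circ-pair} before applying $\mu_{X\times Y}$, while the paper expands $\mu_{X\times Y}$ first), and your target pairing $\<\<0,\ \pi_2\circ\pi_1\circ\pi_2 + \pi_1\circ\pi_2\circ\pi_2\>,\ \<\pi_1,\ \pi_2\circ\pi_2\circ\pi_2\>\>$ agrees with the paper's final expression via the additivity of pairing.
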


\begin{proof}
  We have, on the one hand:
  \begin{align*}
    & \mu_{X\times Y} \circ T(t_{X,Y}) \circ t_{X,TY}
    \\[2mm]
    ={} & \quad\textup{by def. of $\mu_{X\times Y}$ and by
      \lemref{lemma-T-additive-morphisms} because $t_{X,Y}$ is
      linear}
    \\[2mm]
    & \<\pi_2\circ\pi_1 + \pi_1\circ\pi_2, \pi_2\circ\pi_2\>\circ
    (t_{X,Y}\times t_{X,Y})\circ t_{X,TY}
    \\[2mm]
    ={} & \quad\textup{by \eqref{eq:circ-pair} and because $\C$ is
      left-additive}
    \\[2mm]
    & \<\pi_2\circ\pi_1\circ(t_{X,Y}\times t_{X,Y})\circ t_{X,TY} +
    \pi_1\circ\pi_2\circ(t_{X,Y}\times t_{X,Y})\circ t_{X,TY},
    \\
    &\hphantom{\<}\pi_2\circ\pi_2\circ (t_{X,Y}\times t_{X,Y})\circ
    t_{X,TY}\>
    \displaybreak[0]
    \\[2mm]
    ={} & \quad\textup{by \eqref{eq:times-def}}
    \\[2mm]
    & \<\pi_2\circ t_{X,Y}\circ\pi_1\circ t_{X,TY} + \pi_1\circ
    t_{X,Y}\circ\pi_2\circ t_{X,TY}, \pi_2\circ t_{X,Y}\circ\pi_2\circ
    t_{X,TY}\>
    \displaybreak[0]
    \\[2mm]
    ={} & \quad\textup{by def. of $t$ and by \eqref{eq:proj}}
    \\[2mm]
    & \<\<\pi_1, \pi_2\circ\pi_2\> \circ \<0, \pi_1\circ\pi_2\> + \<0,
    \pi_1\circ\pi_2\> \circ \<\pi_1, \pi_2\circ\pi_2\>, \<\pi_1,
    \pi_2\circ \pi_2\> \circ \<\pi_1, \pi_2\circ\pi_2\>\>
    \displaybreak[0]
    \\[2mm]
    ={} & \quad\textup{by \eqref{eq:circ-pair}}
    \\[2mm]
    & \<\< \pi_1\circ\<0, \pi_1\circ\pi_2\>, \pi_2\circ\pi_2\circ\<0,
    \pi_1\circ\pi_2\>\> + \<0\circ\<\pi_1, \pi_2\circ\pi_2\>,
    \pi_1\circ\pi_2\circ\<\pi_1, \pi_2\circ\pi_2\>\>,
    \\
    & \hphantom{\<} \<\pi_1\circ\<\pi_1, \pi_2\circ\pi_2\>,
    \pi_2\circ\pi_2\circ\<\pi_1, \pi_2\circ\pi_2\>\>\>
    \displaybreak[0]
    \\[2mm]
    ={} & \quad\textup{by \eqref{eq:proj} and because $\C$ is
      left-additive}
    \\[2mm]
    & \<\<0, \pi_2\circ\pi_1\circ\pi_2\>
      + \<0, \pi_1\circ\pi_2\circ\pi_2\>,
        \<\pi_1, \pi_2\circ\pi_2\circ\pi_2\>\>
    \intertext{%
      On the other hand:
    }%
    & t_{X,Y}\circ(\id_X\times \mu_Y)
    \\[2mm]
    ={} & \quad\textup{by def. of $t_{X,Y}$ and \eqref{eq:circ-pair}}
    \\[2mm]
    & \<\<0 \circ (\id_X\times\mu_Y), \pi_1\circ\pi_2 \circ
    (\id_X\times\mu_Y)\>, \<\pi_1 \circ (\id_X\times\mu_Y),
    \pi_2\circ\pi_2 \circ (\id_X\times\mu_Y)\>
    \displaybreak[0]
    \\[2mm]
    ={} & \quad\textup{by \eqref{eq:times-def} and because $\C$ is
      left-additive}
    \\[2mm]
    & \<\<0, \pi_1\circ\mu_Y\circ\pi_2\>, \<\pi_1,
    \pi_2\circ\mu_Y\circ\pi_2\>\>
    \displaybreak[0]
    \\[2mm]
    ={} & \quad\textup{by def. of $\mu_Y$ and \eqref{eq:proj}}
    \\[2mm]
    & \<\<0, (\pi_2\circ\pi_1+\pi_1\circ\pi_2)\circ\pi_2\>, \<\pi_1,
    \pi_2\circ\pi_2\circ\pi_2\>\>
    \\[2mm]
    ={} & \quad\textup{because $\C$ is left-additive and pairing is
      additive}
    \\[2mm]
    & \<\<0, \pi_2\circ\pi_1\circ\pi_2\> + \<0,
    \pi_1\circ\pi_2\circ\pi_2\>, \<\pi_1,
    \pi_2\circ\pi_2\circ\pi_2\>\>.
  \end{align*}
  The obtained expressions are identical, hence the assertion.
\end{proof}

\begin{theorem}
  The natural transformation $t$ defined by \eqref{eq:strength-def} is
  a strength for the monad $(T, \eta, \mu)$.
\end{theorem}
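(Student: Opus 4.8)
The plan is to unwind the definition of tensorial strength recalled just before diagram~\eqref{eq:strength-def} and observe that the theorem is nothing more than the conjunction of facts already established. By that definition, to exhibit $t$ as a strength for $(T, \eta, \mu)$ I must verify two things: first, that $t_{X,Y}$ is natural in $X$ and $Y$; and second, that the four coherence diagrams \eqref{eq:strength-terminal}, \eqref{eq:strength-assoc}, \eqref{eq:strength-unit}, and \eqref{eq:strength-mult} commute. Each of these five obligations has already been discharged in a separate preceding result.

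Concretely, I would assemble the proof as follows. Naturality of $t$ is precisely the content of the unnamed lemma immediately following \eqref{eq:strength-def}, whose proof compares $t_{U,V}\circ(f\times T(g))$ with $T(f\times g)\circ t_{X,Y}$ and finds them equal. The commutativity of the remaining four diagrams is supplied verbatim by \lemref{lem-strength-terminal}, \lemref{lem-strength-assoc}, \lemref{lem-strength-unit}, and \lemref{lem-strength-mult}, in that order. Since these exhaust every clause in the definition of a tensorial strength, the conclusion follows immediately; no further computation is required at the level of the theorem itself.

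There is no genuine obstacle at this final step, because all the combinatorial work has already been absorbed into the lemmas. If I had to point to where the difficulty truly lay, it is in \lemref{lem-strength-assoc}: the associativity pentagon forces one to track the distributed zero components through the nested associativity constraints $a_{X,Y,Z}$ and $a_{X,Y,TZ}$, and it is there that the interplay between the combinators $\<-,-\>$ and $\times$, the left-additivity of $\C$, and the explicit form of $t$ is most delicate. The theorem proper, by contrast, is a routine citation, so I would simply write that it follows from the naturality lemma together with Lemmas~\ref{lem-strength-terminal}, \ref{lem-strength-assoc}, \ref{lem-strength-unit}, and~\ref{lem-strength-mult}.
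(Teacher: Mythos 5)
Your proposal is correct and matches the paper's own proof, which likewise disposes of the theorem by citing Lemmas~\ref{lem-strength-terminal}, \ref{lem-strength-assoc}, \ref{lem-strength-unit}, and \ref{lem-strength-mult} (naturality of $t$ having been established in the lemma immediately following \eqref{eq:strength-def}). Your additional explicit mention of the naturality lemma is a harmless, indeed slightly more careful, presentation of the same argument.
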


\begin{proof}
  Follows from Lemmas~\ref{lem-strength-terminal},
  \ref{lem-strength-assoc}, \ref{lem-strength-unit}, and
  \ref{lem-strength-mult}.
\end{proof}

The tensorial strength $t$ is also called the right tensorial
strength.  Using the symmetry $c$ of $\C$, we may also define the left
tensorial strength by
\begin{equation}
  \label{eq:strength'-def}
  t'_{X,Y} = \left[TX\times Y\xrightarrow{c_{TX,Y}}Y\times
    TX\xrightarrow{t_{Y,X}}T(Y\times
    X)\xrightarrow{T(c_{Y,X})}T(X\times Y)\right].
\end{equation}

\begin{lemma}\label{lemma-strength'-expr}
  $t'_{X,Y} = \<\<\pi_1\circ\pi_1, 0\>, \<\pi_2\circ\pi_1, \pi_2\>\> =
  \<\pi_1\times 0, \pi_2\times\id_Y\>$.
\end{lemma}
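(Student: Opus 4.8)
The plan is to compute the composite defining $t'_{X,Y}$ in \eqref{eq:strength'-def} directly, reducing everything to the combinators $\<-,-\>$ and $\times$ and then applying the cartesian identities \eqref{eq:proj}--\eqref{eq:times-circ-pair} together with \eqref{eq:sym-pair}. The one structural observation that makes the calculation clean is that the symmetry $c_{Y,X}$ is linear, so by \lemref{lemma-T-additive-morphisms} we have $T(c_{Y,X}) = c_{Y,X}\times c_{Y,X}$; this replaces the outer functor application by an ordinary product of morphisms, which distributes over pairing via \eqref{eq:times-circ-pair}.

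First I would compose $t_{Y,X}$ with $c_{TX,Y} = \<\pi_2,\pi_1\>$. Expanding $t_{Y,X}$ by its definition \eqref{eq:strength-def} (with the roles of $X$ and $Y$ interchanged), pushing $c_{TX,Y}$ through the pairings by \eqref{eq:circ-pair}, and simplifying each projection by \eqref{eq:proj} while using $0\circ c_{TX,Y} = 0$ from left-additivity, I expect to obtain
\[
t_{Y,X}\circ c_{TX,Y} = \<\<0, \pi_1\circ\pi_1\>, \<\pi_2, \pi_2\circ\pi_1\>\>.
\]
Then I would compose this on the left with $T(c_{Y,X}) = c_{Y,X}\times c_{Y,X}$. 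Applying \eqref{eq:times-circ-pair} distributes the product over the outer pairing, and \eqref{eq:sym-pair} (namely $c\circ\<f,g\> = \<g,f\>$) interchanges the two entries of each inner pair, yielding
\[
t'_{X,Y} = \<\<\pi_1\circ\pi_1, 0\>, \<\pi_2\circ\pi_1, \pi_2\>\>,
\]
which is the first claimed expression.

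Finally, the second equality is immediate from \eqref{eq:times-pair}: expanding $\pi_1\times 0 = \<\pi_1\circ\pi_1, 0\circ\pi_2\>$ and $\pi_2\times\id_Y = \<\pi_2\circ\pi_1, \id\circ\pi_2\>$, and simplifying $0\circ\pi_2 = 0$ by left-additivity, reproduces exactly the first expression.

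There is no serious obstacle here; the argument is a routine chase at the level of combinators. The only point requiring care is the bookkeeping of the nested product structures, namely $TX\times Y = (X\times X)\times Y$ on the domain against $T(X\times Y) = (X\times Y)\times(X\times Y)$ on the codomain, so that each projection is matched to the correct factor, together with the careful tracking of the two symmetries $c_{TX,Y}$ and $c_{Y,X}$, which swap components at different stages of the composite.
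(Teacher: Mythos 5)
Your proposal is correct and takes essentially the same route as the paper's proof: both replace $T(c_{Y,X})$ by $c_{Y,X}\times c_{Y,X}$ via \lemref{lemma-T-additive-morphisms}, expand $t_{Y,X}$ by its definition \eqref{eq:strength-def}, and reduce using \eqref{eq:circ-pair}, \eqref{eq:times-circ-pair}, \eqref{eq:sym-pair}, \eqref{eq:proj}, and left-additivity, with the second expression then following from \eqref{eq:times-pair}; the only (immaterial) difference is that you simplify $t_{Y,X}\circ c_{TX,Y}$ first and then apply $c_{Y,X}\times c_{Y,X}$, whereas the paper distributes both symmetries through the pairing in a single pass. Your intermediate expression $\<\<0,\pi_1\circ\pi_1\>,\<\pi_2,\pi_2\circ\pi_1\>\>$ is correct, so the calculation goes through exactly as you outline.
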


\begin{proof}
  We have:
  \begin{alignat*}{3}
    T(c)\circ t\circ c
    &= (c\times c)\circ t\circ c
    & \qquad & \textup{by \lemref{lemma-T-additive-morphisms} because
      $c$ is linear}
    \displaybreak[0]
    \\
    &= (c\times c) \circ \<\<0, \pi_1\circ\pi_2\>,
       \<\pi_1, \pi_2\circ\pi_2\>\>\circ c
    & \qquad & \textup{by def. of $t$}
    \displaybreak[0]
    \\
    &= \<c\circ\<0, \pi_1\circ\pi_2\>\circ c, c\circ\<\pi_1,
    \pi_2\circ\pi_2\>\circ c\>
    & \qquad & \textup{by \eqref{eq:circ-pair} and
      \eqref{eq:times-circ-pair}}
    \displaybreak[0]
    \\
    &= \<\<\pi_1\circ\pi_2, 0\>\circ c,
         \<\pi_2\circ\pi_2, \pi_1\> \circ c\>
    & \qquad & \textup{by \eqref{eq:sym-pair}}
    \displaybreak[0]
    \\
    &= \<\<\pi_1\circ\pi_2\circ c, 0\circ c\>,
         \<\pi_2\circ\pi_2\circ c, \pi_1\circ c\>\>
    & \qquad & \textup{by \eqref{eq:circ-pair}}
    \displaybreak[0]
    \\
    &= \<\<\pi_1\circ\pi_1, 0\>, \<\pi_2\circ\pi_1, \pi_2\>\>
    & \qquad & \textup{by def. of $c$, by \eqref{eq:proj},}
    \\
    & & \qquad & \textup{and because $\C$ is left-additive}
    \displaybreak[0]
    \\
    &= \<\pi_1\times 0, \pi_2\times\id\>
    & \qquad & \textup{by \eqref{eq:times-def}},
  \end{alignat*}
  as asserted.
\end{proof}

\subsection{The monoidal structure of $T$}
\label{sec-monoidal-structure}

Because the functor $T$ is part of a strong monad, by
\citep[Theorem~2.1]{MR0260825} $T$ becomes a monoidal functor $(T,
\psi, \psi^0):\C\to\C$ if we put $\psi_{X,Y}$ equal to the composite
\begin{equation}
  \label{eq:psi-def}
  \psi_{X,Y} = \left[TX\times TY\xrightarrow{t'_{X,TY}}T(X\times
    TY)\xrightarrow{T(t_{X,Y})}TT(X\times Y)\xrightarrow{\mu_{X\times
        Y}} T(X\times Y)\right]
\end{equation}
and by putting $\psi^0=\eta_\unito:\unito\to T\unito$.  The definition
of $\psi_{X,Y}$ is asymmetric, and indeed there is also a morphism
\begin{equation}
  \label{eq:psi-tilde-def}
  \tilde\psi_{X,Y} = \left[TX\times TY\xrightarrow{t_{TX,Y}}T(TX\times
  Y)\xrightarrow{T(t'_{X,Y})}TT(X\times Y)\xrightarrow{\mu_{X\times
      Y}}T(X\times Y)\right]
\end{equation}
that also makes $T$ into a monoidal functor.  Strong monads for which
$\psi$ and $\tilde\psi$ agree are called \emph{commutative}
\citep[Definition~3.1]{MR0260825}.  Let us prove that the tangent
bundle monad is commutative by computing the morphisms $\psi$ and
$\tilde\psi$ explicitly and showing that they are equal.

\begin{lemma}\label{lemma-psi-is-shuffle}
  $\psi_{X,Y} = \<\<\pi_1\circ\pi_1, \pi_1\circ\pi_2\>,
  \<\pi_2\circ\pi_1, \pi_2\circ\pi_2\>\>: (X\times X)\times (Y\times
  Y)\to (X\times Y)\times (X\times Y)$.
\end{lemma}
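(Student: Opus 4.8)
The plan is to unfold the definition \eqref{eq:psi-def} of $\psi_{X,Y}$ as the composite $\mu_{X\times Y}\circ T(t_{X,Y})\circ t'_{X,TY}$ and to evaluate it directly by substituting the explicit combinator expressions for each of the three factors, reducing everything with the projection and pairing laws \eqref{eq:proj}, \eqref{eq:circ-pair}, \eqref{eq:times-circ-pair} together with left-additivity and the additivity of pairing. Notably, no appeal to the differential axioms D1--D7 is needed at this point: all the derivative content has already been discharged into the formulas for $t$, $t'$, and $\mu$, and the morphisms $\mu_{X\times Y}$ and $t_{X,Y}$ are linear.

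First I would record the three ingredients. Applying \lemref{lemma-strength'-expr} with $TY$ in place of $Y$ gives the left strength $t'_{X,TY}=\<\<\pi_1\circ\pi_1, 0\>, \<\pi_2\circ\pi_1, \pi_2\>\>$. Since $t_{X,Y}$ is linear, \lemref{lemma-T-additive-morphisms} yields $T(t_{X,Y})=t_{X,Y}\times t_{X,Y}$, where $t_{X,Y}=\<\<0, \pi_1\circ\pi_2\>, \<\pi_1, \pi_2\circ\pi_2\>\>$ by definition, and $\mu_{X\times Y}=\<\pi_2\circ\pi_1 + \pi_1\circ\pi_2, \pi_2\circ\pi_2\>$.

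Next I would compute the partial composite $T(t_{X,Y})\circ t'_{X,TY}=(t_{X,Y}\times t_{X,Y})\circ t'_{X,TY}$. By \eqref{eq:times-circ-pair} this splits as $\<t_{X,Y}\circ\<\pi_1\circ\pi_1, 0\>, t_{X,Y}\circ\<\pi_2\circ\pi_1, \pi_2\>\>$, and each inner composite reduces, via \eqref{eq:circ-pair}, \eqref{eq:proj}, and the facts $0\circ h=0$ and $\pi_i\circ 0=0$, to a nested pairing; the first collapses to $\<0, \<\pi_1\circ\pi_1, 0\>\>$ and the second to $\<\<0, \pi_1\circ\pi_2\>, \<\pi_2\circ\pi_1, \pi_2\circ\pi_2\>\>$. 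Precomposing this with $\mu_{X\times Y}$ and using \eqref{eq:circ-pair} and \eqref{eq:proj}, the second component reduces at once to $\<\pi_2\circ\pi_1, \pi_2\circ\pi_2\>$, while the first becomes the sum $\<\pi_1\circ\pi_1, 0\> + \<0, \pi_1\circ\pi_2\>$, which collapses by additivity of pairing to $\<\pi_1\circ\pi_1, \pi_1\circ\pi_2\>$; this gives the claimed formula.

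The computation is entirely mechanical, so the only genuine risk is bookkeeping. The step most likely to cause trouble is the reduction of the two inner composites $t_{X,Y}\circ(-)$, where one must correctly track which projections survive and repeatedly invoke $0\circ h=0$ and $\pi_i\circ 0=0$. To guard against slot errors I would keep in mind the underlying set-theoretic shuffle $((x',x),(y',y))\mapsto((x',y'),(x,y))$, with the partial composite realizing the intermediate value $(((0,0),(x',0)),((0,y'),(x,y)))$, which lets one check each symbolic reduction against a concrete target.
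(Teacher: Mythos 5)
Your proposal is correct and takes essentially the same route as the paper's proof: both unfold $\psi_{X,Y}$ as $\mu_{X\times Y}\circ T(t_{X,Y})\circ t'_{X,TY}$, replace $T(t_{X,Y})$ by $t_{X,Y}\times t_{X,Y}$ via \lemref{lemma-T-additive-morphisms}, and grind through with \eqref{eq:proj}, \eqref{eq:circ-pair}, \eqref{eq:times-circ-pair}, left-additivity, and additivity of pairing; you merely evaluate the composite inside-out (computing $T(t)\circ t'$ first, correctly obtaining $\<\<0,\<\pi_1\circ\pi_1,0\>\>,\<\<0,\pi_1\circ\pi_2\>,\<\pi_2\circ\pi_1,\pi_2\circ\pi_2\>\>\>$) where the paper pushes the components of $\mu$ through from the outside. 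The only blemish is terminological: in the final step you mean \emph{post}composing the partial composite with $\mu_{X\times Y}$, not precomposing.
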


\begin{proof}
  Taking into account that $T(t_{X,Y}) = t_{X,Y}\times t_{X,Y}$ by
  \lemref{lemma-T-additive-morphisms} because $t_{X,Y}$ is linear,
  we have:
  \begin{align*}
    & \mu\circ T(t) \circ t'
    \\[2mm]
    ={} & \quad\textup{by def. of $\mu$}
    \\[2mm]
    & \<\pi_2\circ\pi_1 + \pi_1\circ\pi_2, \pi_2\circ\pi_2\> \circ
    T(t) \circ t'
    \displaybreak[0]
    \\[2mm]
    ={} & \quad\textup{by \eqref{eq:circ-pair} and because $\C$ is
      left-additive}
    \\[2mm]
    & \<\pi_2\circ\pi_1\circ T(t)\circ t' + \pi_1\circ\pi_2\circ
    T(t)\circ t', \pi_2\circ\pi_2\circ T(t) \circ t'\>
    \displaybreak[0]
    \\[2mm]
    ={} & \quad\textup{by \eqref{eq:times-def} and because $T(t) =
      t\times t$}
    \\[2mm]
    & \<\pi_2\circ t\circ\pi_1\circ t' + \pi_1\circ t\circ\pi_2\circ
    t', \pi_2\circ t\circ\pi_2\circ t'\>
    \displaybreak[0]
    \\[2mm]
    ={} & \quad\textup{by def. of $t$ and $t'$ and by \eqref{eq:proj}}
    \\[2mm]
    & \<\<\pi_1,\pi_2\circ\pi_2\> \circ\<\pi_1\circ\pi_1, 0\> + \<0,
    \pi_1\circ\pi_2\>\circ\<\pi_2\circ\pi_1, \pi_2\>, \<\pi_1,
    \pi_2\circ\pi_2\>\circ\<\pi_2\circ\pi_1, \pi_2\>\>
    \displaybreak[0]
    \\[2mm]
    ={} & \quad\textup{by \eqref{eq:circ-pair}}
    \\[2mm]
    & \<\<\pi_1\circ\<\pi_1\circ\pi_1, 0\>,
    \pi_2\circ\pi_2\circ\<\pi_1\circ\pi_1, 0\>\> +
    \<0\circ\<\pi_2\circ\pi_1, \pi_2\>,
    \pi_1\circ\pi_2\circ\<\pi_2\circ\pi_1, \pi_2\>\>,
    \\
    &\hphantom{\<} \<\pi_1\circ\<\pi_2\circ\pi_1, \pi_2\>,
    \pi_2\circ\pi_2\circ\<\pi_2\circ\pi_1, \pi_2\>\>\>
    \displaybreak[0]
    \\[2mm]
    ={} & \quad\textup{by \eqref{eq:proj} and because $\C$ is
      left-additive and $\pi_2$ is additive}
    \\[2mm]
    & \<\<\pi_1\circ\pi_1, 0\> + \<0, \pi_1\circ\pi_2\>,
    \<\pi_2\circ\pi_1, \pi_2\circ\pi_2\>\>
    \displaybreak[0]
    \\[2mm]
    ={} & \quad\textup{because pairing is additive}
    \\[2mm]
    & \<\<\pi_1\circ\pi_1, \pi_1\circ\pi_2\>, \<\pi_2\circ\pi_1,
    \pi_2\circ\pi_2\>\>,
  \end{align*}
  as asserted.
\end{proof}

\begin{lemma}\label{lemma-tilde-psi-is-shuffle}
  $\tilde\psi_{X,Y}=\<\<\pi_1\circ\pi_1, \pi_1\circ\pi_2\>,
  \<\pi_2\circ\pi_1, \pi_2\circ\pi_2\>\>$.
\end{lemma}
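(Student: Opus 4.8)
The plan is to mirror the computation in the proof of \lemref{lemma-psi-is-shuffle}, interchanging the roles of $t$ and $t'$. The first step is to record that $t'_{X,Y}$ is a linear morphism: by \lemref{lemma-strength'-expr} it equals $\<\pi_1\times 0, \pi_2\times\id_Y\>$, a pairing of products of projections, identities, and zero morphisms, all of which are linear, and the class of linear morphisms is closed under product and pairing. Consequently \lemref{lemma-T-additive-morphisms} gives $T(t'_{X,Y}) = t'_{X,Y}\times t'_{X,Y}$, and it is this identity that makes the expansion of $\tilde\psi$ tractable in the same way that $T(t_{X,Y}) = t_{X,Y}\times t_{X,Y}$ was used for $\psi$.

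With this in hand I would expand $\tilde\psi_{X,Y} = \mu_{X\times Y}\circ T(t'_{X,Y})\circ t_{TX,Y}$ exactly as before. Substituting the definition of $\mu$, I distribute the composite $\<\pi_2\circ\pi_1 + \pi_1\circ\pi_2, \pi_2\circ\pi_2\>\circ (t'_{X,Y}\times t'_{X,Y})\circ t_{TX,Y}$ over the pairing and over the sum using \eqref{eq:circ-pair} and left-additivity, then invoke \eqref{eq:times-def} to push the projections $\pi_1,\pi_2$ through the product $t'_{X,Y}\times t'_{X,Y}$, reducing everything to individual terms of the shape $\pi_i\circ t'_{X,Y}\circ\pi_j\circ t_{TX,Y}$. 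Finally I substitute the explicit formula for $t'_{X,Y}$ from \lemref{lemma-strength'-expr} and for $t_{TX,Y}$ from \eqref{eq:strength-def}, and simplify each component using \eqref{eq:proj}, the vanishing of the zero slots via D2, and the additivity of the projections.

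The main bookkeeping obstacle is that the outer strength here is $t_{TX,Y}$ rather than $t_{X,TY}$, so the nested projections unpack a different bracketing than in \lemref{lemma-psi-is-shuffle}, and care is needed to track which copy of $X$ and which tangent coordinate lands in which slot. As a sanity check to guide the algebra, note that set-theoretically $t_{TX,Y}$ sends $((x'',x'),(y',y))$ to $(((0,0),y'),((x'',x'),y))$; applying $T(t'_{X,Y})$ and then $\mu_{X\times Y}$ yields $((x'',y'),(x',y))$, which is precisely $\<\<\pi_1\circ\pi_1, \pi_1\circ\pi_2\>, \<\pi_2\circ\pi_1, \pi_2\circ\pi_2\>\>$ evaluated at $((x'',x'),(y',y))$. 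Since the resulting expression coincides with that obtained for $\psi_{X,Y}$ in \lemref{lemma-psi-is-shuffle}, this computation will simultaneously establish that the tangent bundle monad $(T,\eta,\mu)$ is commutative.
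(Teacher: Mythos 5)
Your proposal is correct and matches the paper's own proof: the paper likewise expands $\tilde\psi = \mu\circ T(t')\circ t$ using $T(t')=t'\times t'$ (linearity of $t'$, via \lemref{lemma-T-additive-morphisms}), reduces everything to the terms $\pi_i\circ t'\circ\pi_j\circ t$, substitutes the explicit formulas for $t$ and $t'$, and simplifies to the shuffle. The only quibble is a citation slip: the zero slots vanish by left-additivity ($0\circ f = 0$) and the additivity of the projections ($\pi_2\circ 0 = 0$), not by D2, which governs the differential operator $D$ and plays no role in this purely product-level computation.
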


\begin{proof}
  The proof is similar to the proof of
  \lemref{lemma-psi-is-shuffle}.  We have:
  \begin{align*}
    & \mu\circ T(t')\circ t
    \\[2mm]
    ={} & \quad\textup{by def. of $\mu$ and because $T(t')=t'\times
      t'$, see \lemref{lemma-psi-is-shuffle}}
    \\[2mm]
    & \<\pi_2\circ t'\circ\pi_1\circ t + \pi_1\circ t'\circ\pi_2\circ
    t, \pi_2\circ t'\circ\pi_2\circ t\>
    \displaybreak[0]
    \\[2mm]
    ={} & \quad\textup{by def. of $t$ and $t'$ and by \eqref{eq:proj}}
    \\[2mm]
    & \<\<\pi_2\circ\pi_1, \pi_2\> \circ \<0, \pi_1\circ\pi_2\> +
    \<\pi_1,\pi_2\circ\pi_2\> \circ \<\pi_1\circ\pi_1, 0\>,
    \<\pi_2\circ\pi_1, \pi_2\> \circ \<\pi_1, \pi_2\circ \pi_2\>\>
    \displaybreak[0]
    \\[2mm]
    ={} & \quad\textup{by \eqref{eq:circ-pair}}
    \\[2mm]
    & \<\<\pi_2\circ\pi_1\circ\<0, \pi_1\circ\pi_2\>, \pi_2\circ\<0,
    \pi_1\circ\pi_2\>\> + \<\pi_1\circ\<\pi_1\circ\pi_1, 0\>,
    \pi_2\circ\pi_2\circ\<\pi_1\circ\pi_1, 0\>\>,
    \\
    & \hphantom{\<} \<\pi_2\circ\pi_1\circ\<\pi_1, \pi_2\circ\pi_2\>,
    \pi_2\circ\<\pi_1, \pi_2\circ\pi_2\>\>\>
    \displaybreak[0]
    \\[2mm]
    ={} & \quad\textup{by \eqref{eq:proj} and because $\C$ is
      left-additive and $\pi_2$ is additive}
    \\[2mm]
    & \<\<0, \pi_1\circ\pi_2\> + \<\pi_1\circ\pi_1, 0\>,
    \<\pi_2\circ\pi_1, \pi_2\circ\pi_2\>\>
    \displaybreak[0]
    \\[2mm]
    ={} & \quad\textup{because pairing is additive}
    \\[2mm]
    & \<\<\pi_1\circ\pi_1, \pi_1\circ\pi_2\>, \<\pi_2\circ\pi_1,
    \pi_2\circ\pi_2\>\>.
  \end{align*}
  The lemma is proven.
\end{proof}

\begin{theorem}
  The monad $(T, \eta, \mu)$ is commutative.
\end{theorem}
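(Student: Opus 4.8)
The plan is to appeal directly to the two preceding lemmas, since the substance of the argument has already been carried out there. Recall that a strong monad is \emph{commutative} exactly when the two monoidal structure morphisms $\psi_{X,Y}$ and $\tilde\psi_{X,Y}$, defined in \eqref{eq:psi-def} and \eqref{eq:psi-tilde-def}, coincide for every pair of objects $X$ and $Y$. Thus the entire content of the theorem reduces to verifying a single equation between two explicitly described natural transformations, and no further manipulation of the differential axioms is needed.

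First I would invoke \lemref{lemma-psi-is-shuffle}, which computes
\[
\psi_{X,Y} = \<\<\pi_1\circ\pi_1, \pi_1\circ\pi_2\>, \<\pi_2\circ\pi_1, \pi_2\circ\pi_2\>\>,
\]
the ``shuffle'' morphism that set-theoretically sends $((x', x), (y', y))$ to $((x', y'), (x, y))$. Then I would invoke \lemref{lemma-tilde-psi-is-shuffle}, which establishes by an entirely parallel computation that $\tilde\psi_{X,Y}$ equals the very same shuffle morphism. Comparing the two right-hand sides, they are literally identical; hence $\psi_{X,Y} = \tilde\psi_{X,Y}$ for every $X$ and $Y$, and commutativity follows by definition.

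There is no genuine obstacle remaining at this stage, as the two lemmas have already absorbed all of the computational difficulty, each unwinding the threefold composite $\mu\circ T(-)\circ(-)$ using the definitions of $t$, $t'$, $\mu$ and the pairing and product identities \eqref{eq:proj}--\eqref{eq:times-circ-pair}. The only point requiring care is a bookkeeping one: I would confirm that the morphisms $\psi$ and $\tilde\psi$ of \eqref{eq:psi-def}--\eqref{eq:psi-tilde-def} are precisely the pair whose agreement \citet{MR0260825} adopts as the definition of commutativity, rather than a transpose or variant of it. Once that identification is in place, the theorem is immediate from the juxtaposition of the two lemmas.
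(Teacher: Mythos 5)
Your proposal is correct and matches the paper's own proof, which likewise deduces the theorem immediately from \lemref{lemma-psi-is-shuffle} and \lemref{lemma-tilde-psi-is-shuffle} showing that $\psi$ and $\tilde\psi$ are both equal to the shuffle morphism. Your extra remark about checking that this pair is the one in \citet{MR0260825}'s definition of commutativity is sound diligence but adds nothing beyond what the paper already states when introducing $\psi$ and $\tilde\psi$.
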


\begin{proof}
  Follows from Lemmas~\ref{lemma-psi-is-shuffle} and
  \ref{lemma-tilde-psi-is-shuffle}.
\end{proof}

\begin{remark}\label{remark-psi-iso}
  Note that by \eqref{eq:times-def}, $\psi=\tilde\psi$ can also be
  written as $\<\pi_1\times\pi_1, \pi_2\times\pi_2\>$.
  Set-theoretically it is given by $\psi((x', x), (y', y)) = ((x',
  y'), (x, y))$.  It is also straightforward that $\psi$ is invertible
  with the inverse $\<T(\pi_1), T(\pi_2)\>=\<\pi_1\times\pi_1,
  \pi_2\times\pi_2\>$.  In particular, $T$ preserves products.
\end{remark}

\begin{lemma}\label{lemma-psi-pair}
  Let $f:Z\to X$, $g:Z\to Y$ be morphisms in $\C$.  Then $T(\<f, g\>)
  = \psi\circ\<T(f), T(g)\>$.
\end{lemma}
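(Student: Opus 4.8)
The plan is to expand both sides into explicit pairings and observe that they agree termwise. I would first rewrite the left-hand side using the definition of $T$, obtaining $T(\<f,g\>) = \<D(\<f,g\>), \<f,g\>\circ\pi_2\>$. Axiom D4 turns the first component into $\<D(f), D(g)\>$, and \eqref{eq:circ-pair} turns the second component into $\<f\circ\pi_2, g\circ\pi_2\>$, so that
\[
T(\<f,g\>) = \<\<D(f), D(g)\>, \<f\circ\pi_2, g\circ\pi_2\>\>.
\]

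For the right-hand side, I would record that $\<T(f), T(g)\> = \<\<D(f), f\circ\pi_2\>, \<D(g), g\circ\pi_2\>\>$ by the definition of $T$. The crucial observation is that the explicit formula for $\psi$ obtained in \lemref{lemma-psi-is-shuffle}, namely $\<\<\pi_1\circ\pi_1, \pi_1\circ\pi_2\>, \<\pi_2\circ\pi_1, \pi_2\circ\pi_2\>\>$, is precisely the distributivity isomorphism $\sigma$, so that equation~\eqref{eq:sigma-pair} applies verbatim. Invoking \eqref{eq:sigma-pair} with the substitutions $f\mapsto D(f)$, $g\mapsto f\circ\pi_2$, $h\mapsto D(g)$, $k\mapsto g\circ\pi_2$ gives
\[
\psi\circ\<T(f),T(g)\> = \<\<D(f), D(g)\>, \<f\circ\pi_2, g\circ\pi_2\>\>,
\]
which is exactly the expression computed for the left-hand side, establishing the claim.

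The computation is entirely routine; there is no real obstacle beyond recognizing the structural coincidence $\psi = \sigma$, which reduces the claim to a single application of \eqref{eq:sigma-pair}. Should one prefer to avoid appealing to that coincidence, one could instead substitute the formula for $\psi$ directly and simplify using \eqref{eq:circ-pair} and \eqref{eq:proj}; this is more tedious but leads to the same answer. Set-theoretically the identity merely asserts that the two ways of producing $((D(f)(x',x), D(g)(x',x)), (f(x), g(x)))$ from $(x',x)$---either by differentiating the pair $\<f,g\>$ directly or by differentiating $f$ and $g$ separately and reshuffling via $\psi$---agree, which serves as a useful sanity check on the formula.
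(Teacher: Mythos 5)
Your proof is correct, but it takes a genuinely different route from the paper's. You compute both sides termwise: the left-hand side expands via D4 and \eqref{eq:circ-pair} to $\<\<D(f),D(g)\>,\<f\circ\pi_2,g\circ\pi_2\>\>$, and the right-hand side collapses to the same expression once you observe that the explicit formula for $\psi$ from \lemref{lemma-psi-is-shuffle} coincides with the distributivity isomorphism $\sigma$, so that \eqref{eq:sigma-pair} applies directly (this identification is legitimate, since \eqref{eq:sigma-pair} depends only on the formula $\<\<\pi_1\circ\pi_1,\pi_1\circ\pi_2\>,\<\pi_2\circ\pi_1,\pi_2\circ\pi_2\>\>$, not on the role $\sigma$ plays elsewhere). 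The paper instead argues structurally: it factors $\<f,g\> = (f\times g)\circ\Delta$ with $\Delta = \<\id_Z,\id_Z\>$, invokes the naturality of $\psi$ to commute $T(f\times g)$ past $\psi$, and reduces everything to the single computed identity $\psi\circ\Delta = T(\Delta)$, which it verifies by a calculation comparable in length to yours. Your version is shorter and more self-contained --- it never needs naturality of $\psi$, only the shuffle formula and one application of \eqref{eq:sigma-pair} --- while the paper's version isolates the conceptually reusable fact that $T$ sends the diagonal to $\psi\circ\Delta$, which is exactly the compatibility one expects between a product-preserving monoidal functor and diagonals. Either proof would be acceptable; yours trades that structural insight for brevity.
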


\begin{proof}
  Because $\<f, g\> = (f\times g)\circ\Delta$, where
  $\Delta=\<\id_Z,\id_Z\>:Z\to Z\times Z$ is the diagonal morphism,
  the claim follows from the naturality of $\psi$ and from the
  equation $\psi\circ\Delta = T(\Delta)$, which is proved as follows:
  \begin{alignat*}{3}
    \psi \circ \Delta
    &= \<\pi_1\times\pi_1, \pi_2\times\pi_2\>\circ\<\id,\id\>
    & \qquad & \textup{by \remref{remark-psi-iso} and def. of
      $\Delta$}
    \displaybreak[0]
    \\
    &= \<(\pi_1\times\pi_1)\circ\<\id, \id\>,
         (\pi_2\times\pi_2)\circ\<\id, \id\>\>
    & \qquad & \textup{by \eqref{eq:circ-pair}}
    \displaybreak[0]
    \\
    &= \< \<\pi_1, \pi_1\>, \<\pi_2, \pi_2\>\>
    & \qquad & \textup{by \eqref{eq:times-circ-pair}},
    \\
    \displaybreak[0]
    \\
    T(\Delta)
    & = \<D(\<\id,\id\>), \<\id, \id\>\circ\pi_2\>
    & \qquad & \textup{by def. of $T$ and $\Delta$}
    \displaybreak[0]
    \\
    & = \<\<\pi_1, \pi_1\>, \<\pi_2, \pi_2\>\>
    & \qquad & \textup{by D4, D3, and \eqref{eq:circ-pair}}.
  \end{alignat*}
  The lemma is proven.
\end{proof}

\subsection{The distributive law of $T$ over itself}
\label{sec-distributive-law}

We are going to prove that the distributivity isomorphism
\[
\sigma = \<\<\pi_1\circ \pi_1, \pi_1\circ\pi_2\>, \<\pi_2\circ\pi_1,
\pi_2\circ\pi_2\>\>: (X\times X)\times (X\times X)\xrightarrow\sim
(X\times X)\times (X\times X)
\]
defines a distributive law of the monad $T$ over itself.

\begin{lemma}
  $\sigma$ is a natural transformation $TT\to TT$.
\end{lemma}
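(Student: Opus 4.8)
The plan is to check the naturality square directly: for every morphism $f:X\to Y$ I would show $\sigma_Y\circ TT(f)=TT(f)\circ\sigma_X$ by expanding both composites with the pairing combinator $\<-,-\>$ and comparing them slot by slot. Since $\sigma$ is assembled purely from projections and pairings, while $TT(f)$ unfolds into an expression built from $D(D(f))$, $D(f)$, and $f$, I expect the comparison to be formal everywhere except in one slot, where the genuinely differential content enters.

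First I would unfold $TT(f)=T(T(f))$. Starting from $T(f)=\<D(f),f\circ\pi_2\>$ and applying the definition of $T$ a second time, together with D4, D5, and D3 (the same computation of $D(T(f))$ already used in the naturality proof for $\mu$), I obtain
\[
  TT(f)=\<\<D(D(f)),\,D(f)\circ\<\pi_2\circ\pi_1,\pi_2\circ\pi_2\>\>,\,\<D(f)\circ\pi_2,\,f\circ\pi_2\circ\pi_2\>\>.
\]
For the left-hand composite I would then invoke \eqref{eq:sigma-pair}, namely $\sigma\circ\<\<a,b\>,\<c,d\>\>=\<\<a,c\>,\<b,d\>\>$, which merely permutes the four slots of $TT(f)$ and produces $\sigma_Y\circ TT(f)$ at once.

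For the right-hand composite $TT(f)\circ\sigma_X$ I would instead distribute $\sigma$ through the outer and inner pairings via \eqref{eq:circ-pair} and simplify each of the four resulting factors using the explicit form of $\sigma$ together with \eqref{eq:proj}, \eqref{eq:circ-pair}, and \eqref{eq:id-pair}; the identities doing the work here are $\pi_2\circ\sigma=\<\pi_2\circ\pi_1,\pi_2\circ\pi_2\>$, $\pi_2\circ\pi_2\circ\sigma=\pi_2\circ\pi_2$, and $\<\pi_1\circ\pi_2,\pi_2\circ\pi_2\>=\pi_2$. After these routine reductions, three of the four slots of $TT(f)\circ\sigma_X$ (all but the upper-left one) already coincide with the corresponding slots of $\sigma_Y\circ TT(f)$.

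The single remaining slot is the upper-left one: there $\sigma_Y\circ TT(f)$ carries $D(D(f))$, whereas $TT(f)\circ\sigma_X$ carries $D(D(f))\circ\sigma$. This is the one step that is not pure bookkeeping, and it is exactly the content of \corref{cor-D-interchange}, which gives $D(D(f))\circ\sigma=D(D(f))$ --- the symmetry of the second derivative descending from D7. With that identity the remaining slots agree and the two composites are equal, establishing the naturality of $\sigma$. The main obstacle is therefore not the long but mechanical combinator manipulation, but recognising that the sole differential input required is the interchange of second derivatives.
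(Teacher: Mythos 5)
Your proof is correct and follows essentially the same route as the paper's: both unfold $T(T(f))$ via D4, D5, and D3, reduce the composites with $\sigma$ using \eqref{eq:proj}, \eqref{eq:circ-pair}, \eqref{eq:id-pair}, and \eqref{eq:sigma-pair}, and isolate $D(D(f))\circ\sigma = D(D(f))$ as the sole genuinely differential step. The only difference is presentational: the paper rewrites $T(T(f))\circ\sigma$ into $\sigma\circ T(T(f))$ in a single chain, citing \lemref{lemma-D-interchange} (of which \corref{cor-D-interchange}, the identity you invoke, is exactly the relevant instance), whereas you normalize both composites and compare them slot by slot.
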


\begin{proof}
  We must show that for any morphism $f$ holds $T(T(f)) \circ \sigma =
  \sigma \circ T(T(f))$.  We have:
  \begin{align*}
    & T(T(f)) \circ \sigma
    \\[2mm]
    ={} & \quad\textup{by def. of $T$ and by \eqref{eq:circ-pair}}
    \\[2mm]
    & \<D(T(f)) \circ \sigma, T(f) \circ \pi_2 \circ \sigma\>
    \displaybreak[0]
    \\[2mm]
    ={} & \quad\textup{by def. of $T$, D4, D5, and D3}
    \\[2mm]
    &\<\<D(D(f)), D(f)\circ \<\pi_2\circ\pi_1,
    \pi_2\circ\pi_2\>\>\circ \sigma, \<D(f),
    f\circ\pi_2\>\circ\pi_2\circ\sigma\>
    \displaybreak[0]
    \\[2mm]
    ={} & \quad\textup{by \eqref{eq:circ-pair}}
    \\[2mm]
    & \<\<D(D(f))\circ\sigma, D(f)\circ \<\pi_2\circ\pi_1\circ\sigma,
    \pi_2\circ\pi_2\circ\sigma\>\>, \<D(f)\circ\pi_2\circ\sigma,
    f\circ\pi_2\circ\pi_2\circ\sigma\>\>
    \displaybreak[0]
    \\[2mm]
    ={} & \quad\textup{by def. of $\sigma$ and \eqref{eq:proj}}
    \\[2mm]
    & \<\<D(D(f))\circ\sigma, D(f)\circ \<\pi_1\circ\pi_2,
    \pi_2\circ\pi_2\>\>, \<D(f)\circ \<\pi_2\circ\pi_1,
    \pi_2\circ\pi_2\>, f\circ\pi_2\circ\pi_2\>\>
    \displaybreak[0]
    \\[2mm]
    ={} & \quad\textup{by \eqref{eq:circ-pair}, \eqref{eq:id-pair},
      D5, and D3}
    \\[2mm]
    & \<\<D(D(f))\circ \sigma, D(f)\circ\pi_2\>, \<D(f\circ\pi_2),
    f\circ\pi_2\circ\pi_2\>\>
    \displaybreak[0]
    \\[2mm]
    ={} & \quad\textup{by \lemref{lemma-D-interchange}}
    \\[2mm]
    & \<\<D(D(f)), D(f)\circ\pi_2\>, \<D(f\circ\pi_2),
    f\circ\pi_2\circ\pi_2\>\>
    \displaybreak[0]
    \\[2mm]
    ={} & \quad\textup{by \eqref{eq:sigma-pair}}
    \\[2mm]
    &\sigma \circ \<\<D(D(f)), D(f\circ\pi_2)\>, \<D(f)\circ\pi_2,
    f\circ\pi_2\circ\pi_2\>\>
    \displaybreak[0]
    \\[2mm]
    ={} & \quad\textup{by def. of $T$, \eqref{eq:circ-pair}, and D4}
    \\[2mm]
    & \sigma \circ \<D(T(f)), T(f) \circ\pi_2\>
    \displaybreak[0]
    \\[2mm]
    ={} & \quad\textup{by def. of $T$}
    \\[2mm]
    & \sigma\circ T(T(f)).
  \end{align*}
  The lemma is proven.
\end{proof}

We recall from \citet{MR0241502} that a natural transformation
$\sigma : T^2\to T^2$ is a distributive law of the monad $T$ over
itself if $\sigma$ makes the following diagrams commute:
\[
\begin{xy}
  (-20, 9)*+{T^3}="1";
  (0, 9)*+{T^3}="2";
  (20, 9)*+{T^3}="3";
  (-20, -9)*+{T^2}="4";
  (20, -9)*+{T^2}="5";
  {\ar@{->}^-{\sigma T} "1";"2"};
  {\ar@{->}^-{T\sigma} "2";"3"};
  {\ar@{->}_-{T\mu} "1";"4"};
  {\ar@{->}^-{\sigma} "4";"5"};
  {\ar@{->}^-{\mu T} "3";"5"};
  (0, -18)*+{T}="6";
  (-10, -32)*+{T^2}="7";
  (10, -32)*+{T^2}="8";
  {\ar@{->}_-{T\eta} "6";"7"};
  {\ar@{->}^-{\eta T} "6";"8"};
  {\ar@{->}^-{\sigma} "7";"8"};
\end{xy}
\qquad
\begin{xy}
  (-20, 9)*+{T^3}="1";
  (0, 9)*+{T^3}="2";
  (20, 9)*+{T^3}="3";
  (-20, -9)*+{T^2}="4";
  (20, -9)*+{T^2}="5";
  {\ar@{->}^-{T\sigma} "1";"2"};
  {\ar@{->}^-{\sigma T} "2";"3"};
  {\ar@{->}_-{\mu T} "1";"4"};
  {\ar@{->}^-{\sigma} "4";"5"};
  {\ar@{->}^-{T\mu} "3";"5"};
  (0, -18)*+{T}="6";
  (-10, -32)*+{T^2}="7";
  (10, -32)*+{T^2}="8";
  {\ar@{->}_-{\eta T} "6";"7"};
  {\ar@{->}^-{T\eta} "6";"8"};
  {\ar@{->}^-{\sigma} "7";"8"};
\end{xy}
\]

\begin{theorem}
  $\sigma$ is a distributive law of the monad $T$ over itself.
\end{theorem}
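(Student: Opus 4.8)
The plan is to verify directly the four coherence conditions encoded in the two diagrams: the two multiplication squares
\[
\mu T\circ T\sigma\circ\sigma T = \sigma\circ T\mu,
\qquad
T\mu\circ\sigma T\circ T\sigma = \sigma\circ\mu T,
\]
together with the two unit triangles $\sigma\circ T\eta = \eta T$ and $\sigma\circ\eta T = T\eta$. Since we have already shown that $\sigma$ is a natural transformation $TT\to TT$, only these four equations remain, and each will be checked by an explicit computation in the same displayed, step-justified style used above for $\psi$ and $\tilde\psi$.

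The key simplification is that $\sigma$, $\mu$, and $\eta$ are all linear, so by \lemref{lemma-T-additive-morphisms} we have $T(\sigma)=\sigma\times\sigma$, $T(\mu)=\mu\times\mu$, and $T(\eta)=\eta\times\eta$, while the whiskered transformations $\sigma T$, $\mu T$, $\eta T$ are just the components of $\sigma$, $\mu$, $\eta$ at the object $TX$. Consequently every morphism appearing in the four equations is a pairing of projections, with sums in the case of $\mu$, and both sides of each equation can be reduced to a common normal form using equations~\eqref{eq:proj}--\eqref{eq:times-circ-pair}, the left-additivity of $\C$, and the definitions of $\sigma$, $\mu$, and $\eta$.

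First I would dispatch the two unit triangles, which are short and rely on \eqref{eq:sigma-pair}. For instance $T(\eta_X)=\eta_X\times\eta_X=\<\<0,\pi_1\>,\<0,\pi_2\>\>$, and precomposing $\sigma$ gives $\sigma\circ\<\<0,\pi_1\>,\<0,\pi_2\>\>=\<\<0,0\>,\<\pi_1,\pi_2\>\>=\<0,\id\>=\eta_{TX}$, which is exactly $(\eta T)_X$; the other triangle is obtained by the same manipulation applied to $\eta_{TX}=\<\<0,0\>,\<\pi_1,\pi_2\>\>$.

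The bulk of the work is the two multiplication squares, which live over $T^3X=((X\times X)\times(X\times X))\times((X\times X)\times(X\times X))$, an eightfold product. The main obstacle here is purely bookkeeping: one must track carefully which copy of $X$ each nested projection selects while chasing a generic element $(((a,b),(c,d)),((e,f),(g,h)))$ through the three-step composites. No conceptual difficulty arises, since set-theoretically each square amounts to an evident shuffle—for the second square, both $T\mu\circ\sigma T\circ T\sigma$ and $\sigma\circ\mu X$ send the chosen element to $((c+e,g),(d+f,h))$, and the first square is verified identically, yielding $((b+c,f+g),(d,h))$ on both sides. I would therefore reduce each composite to a normal form built from triple projections and a single sum and conclude equality by inspection; the two squares are mirror images of one another, differing only in the order of $\sigma T$ and $T\sigma$ and in the roles of $\mu T$ and $T\mu$, so the second follows from the first by an entirely parallel calculation.
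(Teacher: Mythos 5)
Your proposal is correct and takes essentially the same route as the paper: both use the linearity of $\sigma$, $\mu$, and $\eta$ together with \lemref{lemma-T-additive-morphisms} to replace $T(\sigma)$, $T(\mu)$, $T(\eta)$ by $\sigma\times\sigma$, $\mu\times\mu$, $\eta\times\eta$, and then reduce both sides of each coherence equation to a common pairing-of-projections normal form via \eqref{eq:proj}--\eqref{eq:times-circ-pair}, left-additivity, and \eqref{eq:sigma-pair}, and your set-theoretic shuffle values $((b+c,f+g),(d,h))$ and $((c+e,g),(d+f,h))$ match what the paper's combinator computations yield. The only differences are cosmetic: the paper writes out just one pentagon and one triangle and declares the remaining two similar, whereas you sketch all four (note the typo $\sigma\circ\mu X$ for $\sigma\circ\mu T$).
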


\begin{proof}
  We prove only the commutativity of the left pentagon and the right
  triangle.  The proofs of the commutativity of the other two diagrams
  are similar.  Let us check that the triangle commutes, i.e.,
  $\sigma\circ\eta T = T(\eta)$.  We have:
  \begin{alignat*}{3}
    \sigma \circ \eta T
    &= \sigma \circ \<0, \id\>
    & \qquad & \textup{by def. of $\eta$}
    \displaybreak[0]
    \\
    &= \sigma \circ \<\<0, 0\>, \<\pi_1, \pi_2\>\>
    & \qquad & \textup{by \eqref{eq:id-pair} and because pairing
      is additive}
    \displaybreak[0]
    \\
    &= \<\<0, \pi_1\>, \<0, \pi_2\>\>
    & \qquad & \textup{by \eqref{eq:sigma-pair}},
    \\
    \displaybreak[0]
    \\
    T(\eta)
    & = \eta\times\eta
    & \qquad & \textup{by \lemref{lemma-T-additive-morphisms}
      because $\eta$ is linear}
    \displaybreak[0]
    \\
    & = \<\eta\circ\pi_1, \eta\circ\pi_2\>
    & \qquad & \textup{by \eqref{eq:times-def}}
    \displaybreak[0]
    \\
    & = \<\<0, \pi_1\>, \<0, \pi_2\>\>
    & \qquad & \textup{by \eqref{eq:circ-pair} and because $\C$
      is left-additive}.
  \end{alignat*}
  Let us prove the commutativity of the pentagon.  We have, on the one
  hand:
  \begin{align*}
    & \sigma\circ T(\mu)
    \\[2mm]
    ={} & \quad\textup{by \lemref{lemma-T-additive-morphisms}
      because $\mu$ is linear}
    \\[2mm]
    & \sigma \circ (\mu\times \mu)
    \displaybreak[0]
    \\[2mm]
    ={} & \quad\textup{by def. of $\mu$}
    \\[2mm]
    & \sigma\circ \<\pi_2\circ\pi_1 + \pi_1\circ\pi_2,
    \pi_2\circ\pi_2\> \times \<\pi_2\circ\pi_1 + \pi_1\circ\pi_2,
    \pi_2\circ\pi_2\>
    \displaybreak[0]
    \\[2mm]
    ={} & \quad\textup{by \eqref{eq:sigma-times}}
    \\[2mm]
    & \<(\pi_2\circ\pi_1+\pi_1\circ\pi_2)\times
    (\pi_2\circ\pi_1+\pi_1\circ\pi_2), (\pi_2\circ\pi_2) \times
    (\pi_2\circ\pi_2)\>
    \displaybreak[0]
    \\[2mm]
    ={} & \quad\textup{because product is additive}
    \\[2mm]
    & \<(\pi_2\circ\pi_1)\times (\pi_2\circ\pi_1) + (\pi_1\circ\pi_2)
    \times (\pi_1\circ\pi_2), (\pi_2\circ\pi_2)\times
    (\pi_2\circ\pi_2)\>
    \displaybreak[0]
    \\[2mm]
    \intertext{%
      On the other hand:
    }%
    & \mu\circ T(\sigma) \circ \sigma
    \\[2mm]
    ={} & \quad\textup{by def. of $\mu$ and \eqref{eq:circ-pair}}
    \\[2mm]
    & \<\pi_2\circ\pi_1\circ T(\sigma)\circ \sigma +
    \pi_1\circ\pi_2\circ T(\sigma)\circ \sigma, \pi_2\circ\pi_2\circ
    T(\sigma)\circ \sigma\>
    \displaybreak[0]
    \\[2mm]
    ={} & \quad\textup{by \lemref{lemma-T-additive-morphisms}
      because $\sigma$ is linear}
    \\[2mm]
    & \<\pi_2\circ\pi_1\circ (\sigma\times\sigma)\circ \sigma +
    \pi_1\circ\pi_2\circ (\sigma\times\sigma)\circ \sigma,
    \pi_2\circ\pi_2\circ (\sigma\times\sigma)\circ \sigma\>
    \displaybreak[0]
    \\[2mm]
    ={} & \quad\textup{by \eqref{eq:times-def}}
    \\[2mm]
    & \<\pi_2\circ\sigma\circ\pi_1\circ\sigma +
    \pi_1\circ\sigma\circ\pi_2\circ\sigma,
    \pi_2\circ\sigma\circ\pi_2\circ\sigma\>
    \displaybreak[0]
    \\[2mm]
    ={} & \quad\textup{by def. of $\sigma$ and \eqref{eq:proj}}
    \\[2mm]
    & \<\<\pi_2\circ\pi_1, \pi_2\circ\pi_2\>\circ\<\pi_1\circ\pi_1,
    \pi_1\circ\pi_2\> + \<\pi_1\circ\pi_1,
    \pi_1\circ\pi_2\>\circ\<\pi_2\circ\pi_1, \pi_2\circ\pi_2\>,
    \\
    & \hphantom{\<}\<\pi_2\circ\pi_1, \pi_2\circ\pi_2\>
    \circ\<\pi_2\circ\pi_1, \pi_2\circ\pi_2\>\>
    \displaybreak[0]
    \\[2mm]
    ={} & \quad\textup{by \eqref{eq:circ-pair} and \eqref{eq:proj}}
    \\[2mm]
    & \<\<\pi_2\circ\pi_1\circ\pi_1, \pi_2\circ\pi_1\circ\pi_2\> +
    \<\pi_1\circ\pi_2\circ\pi_1, \pi_1\circ\pi_2\circ\pi_2\>,
    \<\pi_2\circ\pi_2\circ\pi_1, \pi_2\circ\pi_2\circ\pi_2\>\>
    \displaybreak[0]
    \\[2mm]
    ={} & \quad\textup{by \eqref{eq:times-def}}
    \\[2mm]
    & \<(\pi_2\circ\pi_1)\times (\pi_2\circ\pi_1) + (\pi_1\circ\pi_2)
    \times (\pi_1\circ\pi_2), (\pi_2\circ\pi_2)\times
    (\pi_2\circ\pi_2)\>.
  \end{align*}
  The obtained expressions are identical, hence the assertion.  The
  theorem is proven.
\end{proof}

\begin{proposition}\label{prop-sigma-compat-strength}
  The diagram
  \[
  \begin{xy}
    (-30,9)*+{TX\times TY}="1";
    (30,9)*+{T(TX\times Y)}="2";
    (-30,-9)*+{T(X\times TY)}="3";
    (0, -9)*+{TT(X\times Y)}="4";
    (30, -9)*+{TT(X\times Y)}="5";
    {\ar@{->}^-{t} "1";"2"};
    {\ar@{->}^-{T(t')} "2";"5"};
    {\ar@{->}_-{t'} "1";"3"};
    {\ar@{->}^-{T(t)} "3";"4"};
    {\ar@{->}^-{\sigma} "4";"5"};
  \end{xy}
  \]
  commutes.
\end{proposition}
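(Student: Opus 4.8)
The plan is to establish the equation $T(t'_{X,Y})\circ t_{TX,Y}=\sigma\circ T(t_{X,Y})\circ t'_{X,TY}$ by computing both composites explicitly with the projection combinators, in exactly the style of \lemref{lemma-psi-is-shuffle} and \lemref{lemma-tilde-psi-is-shuffle}. The decisive simplification is that the strengths $t$ and $t'$ are linear, so by \lemref{lemma-T-additive-morphisms} we may replace the outer functor applications by products: $T(t_{X,Y})=t_{X,Y}\times t_{X,Y}$ and $T(t'_{X,Y})=t'_{X,Y}\times t'_{X,Y}$. Products distribute over pairings by \eqref{eq:times-circ-pair}, so after this replacement every step reduces to the familiar manipulations \eqref{eq:proj}, \eqref{eq:circ-pair}, and the left-additivity of $\C$ that kills the spurious zero entries.

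First I would compute the left-hand side $T(t'_{X,Y})\circ t_{TX,Y}$: start from the explicit form of $t_{TX,Y}$ given by \eqref{eq:strength-def}, write $T(t'_{X,Y})$ as $t'_{X,Y}\times t'_{X,Y}$, distribute over the outer pairing, and substitute the explicit form of $t'_{X,Y}$ from \lemref{lemma-strength'-expr}, simplifying componentwise. Set-theoretically this composite sends $((x',x),(y',y))$ to $(((0,0),(0,y')),((x',0),(x,y)))$, which I would use as a target against which to check the symbolic algebra. Next I would compute $T(t_{X,Y})\circ t'_{X,TY}$ the same way, obtaining a fourfold nested pairing whose four innermost leaves are $(0,0)$, $(x',0)$, $(0,y')$, and $(x,y)$; post-composing with $\sigma$ then swaps the two inner leaves in a single step via \eqref{eq:sigma-pair}, producing exactly the expression found for the left-hand side.

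The difficulty here is bookkeeping rather than insight: an element of $TT(X\times Y)$ carries four levels of nesting, so the pairings grow deep and one must track which projection reaches which leaf without slipping. The trick that keeps this under control is to defer the $\sigma$-step to the very end, so that both composites are first brought to the form $\<\<f,g\>,\<h,k\>\>$ and the final equality is then read off from \eqref{eq:sigma-pair} in one move. Note also that, unlike the commutativity result of \lemref{lemma-psi-is-shuffle}, this proposition cannot be deduced merely from $\psi=\tilde\psi$, since post-composing with $\mu$ discards precisely the information that distinguishes the two sides; the computation must therefore be carried out at the level of $TT(X\times Y)$ without collapsing through $\mu$.
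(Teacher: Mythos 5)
Your proposal is correct and matches the paper's own proof essentially step for step: both replace $T(t)$ and $T(t')$ by $t\times t$ and $t'\times t'$ via \lemref{lemma-T-additive-morphisms}, expand both composites using the explicit forms \eqref{eq:strength-def} and \lemref{lemma-strength'-expr} together with \eqref{eq:times-circ-pair}, \eqref{eq:proj}, \eqref{eq:circ-pair} and left-additivity, and conclude in one move from \eqref{eq:sigma-pair}; your set-theoretic targets $(((0,0),(0,y')),((x',0),(x,y)))$ and $(((0,0),(x',0)),((0,y'),(x,y)))$ agree exactly with the paper's symbolic expressions $\<\<0\times 0, 0\times\pi_1\>, \<\pi_1\times 0, \pi_2\times\pi_2\>\>$ and $\<\<0\times 0, \pi_1\times 0\>, \<0\times\pi_1, \pi_2\times\pi_2\>\>$. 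Your closing observation that the claim is strictly stronger than $\psi=\tilde\psi$ (since $\mu\circ\sigma=\mu$) is a correct aside, though not needed for the proof.
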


\begin{proof}
  We have:
  \begin{align*}
    & T(t')\circ t
    \\[2mm]
    ={} & \quad\textup{by \lemref{lemma-T-additive-morphisms}
      because $t'$ is linear}
    \\[2mm]
    & (t'\times t')\circ t
    \displaybreak[0]
    \\[2mm]
    ={} & \quad\textup{by def. of $t$}
    \\[2mm]
    & (t'\times t')\circ\<0\times\pi_1, \id\times\pi_2\>
    \displaybreak[0]
    \\[2mm]
    ={} & \quad\textup{by \eqref{eq:times-circ-pair}}
    \\[2mm]
    & \<t'\circ(0\times\pi_1), t'\circ(\id\times\pi_2)\>
    \displaybreak[0]
    \\[2mm]
    ={} & \quad\textup{by \lemref{lemma-strength'-expr}}
    \\[2mm]
    & \<\<\pi_1\times 0, \pi_2\times\id\>\circ(0\times\pi_1),
    \<\pi_1\times 0, \pi_2\times\id\>\circ(\id\times\pi_2)\>
    \displaybreak[0]
    \\[2mm]
    ={} & \quad\textup{by \eqref{eq:circ-pair}, functoriality of
      $\times$}
    \\[2mm]
    & \<\<\pi_1\circ 0\times 0\circ\pi_1, \pi_2\circ
    0\times\id\circ\pi_1\>, \<\pi_1\circ\id\times 0\circ\pi_2,
    \pi_2\circ\id\times \id\circ\pi_2\>\>
    \displaybreak[0]
    \\[2mm]
    ={} & \quad\textup{because $\C$ is left-additive and projections
      are additive}
    \\[2mm]
    & \<\<0\times 0, 0\times\pi_1\>, \<\pi_1\times 0,
    \pi_2\times\pi_2\>\>.
  \end{align*}
  Similarly, $T(t)\circ t' = \<\<0\times 0, \pi_1\times 0\>,
  \<0\times\pi_1, \pi_2\times\pi_2\>\>$, and the assertion follows by
  \eqref{eq:sigma-pair}.
\end{proof}

\subsection{The tangent bundle monad on a differential
  $\lambda$-category}
\label{sec-monad-on-diff-cat}

So far, we have only assumed that $\C$ is a cartesian differential
category.  From now on we suppose that $\C$ is a differential
$\lambda$-category.  Let us see what property of the functor $T$
condition~\eqref{eq:D-curry} translates into.  First, observe that
$\<\pi_1\times 0_X, \pi_2\times\id_X\>$ is precisely the tensorial
strength $t'$ by \lemref{lemma-strength'-expr}.  Therefore, equation
\eqref{eq:D-curry} can be written equivalently as
\begin{equation}
  \label{eq:D-curry-1}
  D(\curry(f)) = \curry(D(f)\circ t').
\end{equation}
Substituting $\uncurry(g)$ for $f$ and applying $\uncurry$ to both
sides of this equation, we conclude that condition~\eqref{eq:D-curry}
is equivalent to the following one: for each $g: Z\to X\Rightarrow Y$
holds
\begin{equation}
  \label{eq:D-uncurry}
  \uncurry (D(g)) = D(\uncurry(g))\circ t'.
\end{equation}
The left hand side is equal to
\begin{alignat*}{3}
  \uncurry(D(g))
  & = \ev\circ (D(g)\times \id_X)
  & \qquad & \textup{by def. of $\uncurry$}
  \\
  & = \ev\circ (\pi_1\times \id_X)\circ (T(g)\times \id_X)
  & \qquad & \textup{by def. of $T$, functoriality of $\times$,
    and \eqref{eq:proj}}.
\end{alignat*}
The right hand side of equation~\eqref{eq:D-uncurry} is equal to
\begin{alignat*}{3}
  D(\ev\circ(g\times \id_X))\circ t'
  &= D(\ev)\circ T(g\times\id_X) \circ t'
  & \qquad & \textup{by D5}
  \\
  &= D(\ev)\circ t'\circ (T(g)\times\id_X)
  & \qquad & \textup{by naturality of $t'$}.
\end{alignat*}
We conclude that equation~\eqref{eq:D-uncurry} is equivalent to
\begin{equation}
  \label{eq:D-uncurry-1}
  \ev\circ (\pi_1\times \id_X)\circ (T(g)\times \id_X)
  = D(\ev)\circ t'\circ (T(g)\times\id_X).
\end{equation}
Equation~\eqref{eq:D-uncurry-1} must hold for each $g:Z\to
X\Rightarrow Y$, in particular for $g = \id_{X\Rightarrow Y}$, in
which case it reduces to
\begin{equation}
  \label{eq:D-uncurry-2}
  \ev\circ (\pi_1\times \id_X) = D(\ev)\circ t'.
\end{equation}
Conversely, if equation~\eqref{eq:D-uncurry-2} holds, then by
precomposing both sides with $T(g)\times\id_X$, we find that
equation~\eqref{eq:D-uncurry-1} holds, too.

\begin{proposition}\label{prop-diff-lambda-cat}
  A cartesian closed differential category is a differential
  $\lambda$-category if and only if each evaluation morphism satisfies
  equation~\eqref{eq:D-uncurry-2}.
\end{proposition}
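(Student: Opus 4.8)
The plan is to assemble the chain of equivalences already worked out in the discussion preceding the statement, since that discussion has, in effect, reduced the defining condition~\eqref{eq:D-curry} step by step to a statement about evaluation morphisms. Being a differential $\lambda$-category means \eqref{eq:D-curry} holds for every $f : Z\times X\to Y$. First I would record that this is \emph{equivalent} to \eqref{eq:D-uncurry-1} holding for every $g : Z\to X\Rightarrow Y$. The rewriting \eqref{eq:D-curry}~$\Leftrightarrow$~\eqref{eq:D-curry-1} is just the identification $\<\pi_1\times 0_X,\pi_2\times\id_X\>=t'$ from \lemref{lemma-strength'-expr}; the passage to \eqref{eq:D-uncurry} is reversible because $\curry$ and $\uncurry$ are mutually inverse bijections, so substituting $f=\uncurry(g)$ and applying $\uncurry$ loses nothing and each $f$ is hit exactly once; and \eqref{eq:D-uncurry}~$\Leftrightarrow$~\eqref{eq:D-uncurry-1} consists of the two literal computations of each side, using the definitions of $\uncurry$, $T$, $\curry$ and the naturality of $t'$. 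Every arrow in this chain is therefore an equality-preserving rewrite or a bijection, so the whole chain is an equivalence.

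The two directions now follow formally. For necessity, assuming $\C$ is a differential $\lambda$-category, \eqref{eq:D-uncurry-1} holds for all $g$; taking $g=\id_{X\Rightarrow Y}$ collapses $T(g)\times\id_X$ to the identity and yields exactly \eqref{eq:D-uncurry-2} for $\ev_{X,Y}$. For sufficiency, assuming \eqref{eq:D-uncurry-2} holds for each $\ev_{X,Y}$, I precompose both sides with $T(g)\times\id_X$; since the two sides of \eqref{eq:D-uncurry-1} are precisely the two sides of \eqref{eq:D-uncurry-2} followed by this common factor, I recover \eqref{eq:D-uncurry-1} for every $g$, and the equivalence chain run backwards delivers \eqref{eq:D-curry} for every $f$.

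I do not expect a genuine obstacle, as the heavy lifting is done by \lemref{lemma-strength'-expr} and the two side computations displayed before the statement. The only point deserving care is the sufficiency step: one must check that the single instance $g=\id$ of \eqref{eq:D-uncurry-1} recovers the whole family. This hinges on the observation that $g$ enters \eqref{eq:D-uncurry-1} purely through the right-hand precomposition factor $T(g)\times\id_X$, common to both sides, so the generic instance at $g=\id$ functions as a universal element and everything else is bookkeeping.
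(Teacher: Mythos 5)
Your proposal is correct and follows essentially the same route as the paper: the paper's ``proof'' of \proref{prop-diff-lambda-cat} is exactly the preceding chain of equivalences \eqref{eq:D-curry} $\Leftrightarrow$ \eqref{eq:D-curry-1} $\Leftrightarrow$ \eqref{eq:D-uncurry} $\Leftrightarrow$ \eqref{eq:D-uncurry-1}, with necessity obtained by specializing to $g=\id_{X\Rightarrow Y}$ and sufficiency by precomposing \eqref{eq:D-uncurry-2} with $T(g)\times\id_X$. You correctly identify the one point needing care --- that $g$ enters \eqref{eq:D-uncurry-1} only through the common precomposition factor $T(g)\times\id_X$, so the $g=\id$ instance generates the whole family --- which is precisely the observation the paper relies on.
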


\begin{proposition}\label{prop-T-curry}
  Let $g: A\times B\to C$ be a morphism in $\C$.  Let $h=T(g)\circ
  t':TA\times B\to TC$.  Then $T(\curry(g)) = \<\curry(\pi_1\circ h),
  \curry(\pi_2\circ h)\>: TA\to T(B\Rightarrow C)$.
\end{proposition}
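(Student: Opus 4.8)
The plan is to unfold the definition of $T$ applied to $\curry(g)$ and then recognise the two resulting components as curryings of the two components of $h$. By definition of the tangent bundle functor, $T(\curry(g)) = \<D(\curry(g)), \curry(g)\circ\pi_2\>$. Since the claimed value is also a pairing and a pairing is determined by its components, it suffices to prove the two equalities $D(\curry(g)) = \curry(\pi_1\circ h)$ and $\curry(g)\circ\pi_2 = \curry(\pi_2\circ h)$ separately.

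For the first component I would invoke the reformulation \eqref{eq:D-curry-1} of the differential $\lambda$-category axiom, which reads $D(\curry(g)) = \curry(D(g)\circ t')$. The morphism $h = T(g)\circ t'$ is arranged so that its first component is exactly $D(g)\circ t'$: since $T(g) = \<D(g), g\circ\pi_2\>$, equation \eqref{eq:proj} gives $\pi_1\circ T(g) = D(g)$, whence $\pi_1\circ h = D(g)\circ t'$. Therefore $\curry(\pi_1\circ h) = \curry(D(g)\circ t') = D(\curry(g))$.

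For the second component I would compute $\pi_2\circ h$ analogously. From $\pi_2\circ T(g) = g\circ\pi_2$ we get $\pi_2\circ h = g\circ\pi_2\circ t'$, and \lemref{lemma-strength'-expr} together with \eqref{eq:proj} identifies $\pi_2\circ t'$ with $\pi_2\times\id_B$, the projection here being $\pi_2: TA\to A$. Hence $\pi_2\circ h = g\circ(\pi_2\times\id_B)$, and the currying identity \eqref{eq:curry} then gives $\curry(\pi_2\circ h) = \curry(g\circ(\pi_2\times\id_B)) = \curry(g)\circ\pi_2$, which is the desired equality.

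I expect no serious obstacle here: the argument is bookkeeping rather than computation. The only point requiring attention is keeping the various projections straight --- in particular distinguishing the $\pi_2: TA\to A$ occurring inside $\pi_2\times\id_B$ from the outer projections $\pi_2: T(A\times B)\to A\times B$ and $\pi_2: TC\to C$ used when projecting the components of $h$. The conceptual content is merely that the two governing identities of a differential $\lambda$-category, namely the derivative-of-currying formula \eqref{eq:D-curry-1} and the ordinary currying rule \eqref{eq:curry}, are precisely what turn the two components of $h$ into the two components of $T(\curry(g))$.
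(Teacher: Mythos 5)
Your proof is correct and is essentially identical to the paper's: both reduce the claim componentwise via the universal property of pairing, then settle the first component with the differential $\lambda$-category identity \eqref{eq:D-curry-1} together with $\pi_1\circ h = D(g)\circ t'$, and the second with $\pi_2\circ t' = \pi_2\times\id$ from \lemref{lemma-strength'-expr} followed by the currying rule \eqref{eq:curry}. No gaps; your care in distinguishing the various projections matches the paper's implicit bookkeeping.
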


\begin{proof}
  We need to prove an equation between two morphisms into
  $T(B\Rightarrow C)=(B\Rightarrow C)\times(B\Rightarrow C)$.  This is
  equivalent to proving the two equations obtained by postcomposing
  the equation in question with the two projections.  That is, we need
  to prove the equations:
  \begin{align}
    \pi_1\circ T(\curry(g)) & = \curry (\pi_1\circ h),
    \label{eq:pi1-T-curry}\\
    \pi_2\circ T(\curry(g)) & = \curry (\pi_2\circ
    h).\label{eq:pi2-T-curry}
  \end{align}
  By the definition of $T$, the left hand side of
  \eqref{eq:pi1-T-curry} is equal to $D(\curry(g))$ and the right hand
  side is equal to $\curry(D(g)\circ t')$, hence equation
  \eqref{eq:pi1-T-curry} follows from \eqref{eq:D-curry-1}.  The left
  hand side of \eqref{eq:pi2-T-curry} is equal to
  $\curry(g)\circ\pi_2$, whereas the right hand side is equal to
  $\curry(g\circ\pi_2\circ t')$.  Note that $\pi_2\circ t' =
  \pi_2\times\id$ by \lemref{lemma-strength'-expr}, hence
  $\curry(g\circ\pi_2\circ t') = \curry(g\circ(\pi_2\times\id)) =
  \curry(g)\circ \pi_2$ by \eqref{eq:curry}.
\end{proof}

\subsection{The closed structure of $T$}
\label{sec-closed-structure}

\label{remark-closed-T}
The cartesian closed category $\C$ is a symmetric monoidal closed
category, and hence also a closed category of
\citet{MR0225841}.  By \citep[Proposition~4.3]{MR0225841}, the monoidal
functor $(T,\psi,\psi^0): \C\to\C$ gives rise to a closed functor
$(T,\hat\psi,\psi^0):\C\to\C$, where $\hat\psi=\hat\psi_{X,Y}:
T(X\Rightarrow Y)\to (TX\Rightarrow TY)$ is given by
$\hat\psi=\curry(T(\ev)\circ\psi)$.  We claim that
\begin{equation}
  \label{eq:T-ev-psi}
  T(\ev)\circ \psi = \<\ev\circ(\pi_1\times\pi_2) + D(\ev)\circ t\circ
  (\pi_2\times \id), \ev\circ(\pi_2\times\pi_2)\>.
\end{equation}
By the definition of $T$ and equation~\eqref{eq:circ-pair} we have
$T(\ev)\circ\psi=\<D(\ev)\circ\psi, \ev\circ\pi_2\circ\psi\>$.  The
morphism $\ev\circ\pi_2\circ\psi$ is equal to
$\ev\circ(\pi_2\times\pi_2)$ by \remref{remark-psi-iso}, so it remains
to show that
\begin{equation}
  \label{eq:D-ev-psi}
  D(\ev)\circ\psi = \ev\circ(\pi_1\times\pi_2) + D(\ev)\circ t\circ
  (\pi_2\times\id).
\end{equation}
Let us compute each summand in the right hand side separately.  We
have:
\begin{alignat*}{3}
  \ev\circ(\pi_1\times\pi_2)
  &= \ev\circ(\pi_1\times\id)\circ (\id\times\pi_2)
  & \qquad & \textup{by functoriality of $\times$}
  \displaybreak[0]
  \\
  &= D(\ev)\circ t'\circ (\id\times\pi_2)
  & \qquad & \textup{by \eqref{eq:D-uncurry-2}}
  \displaybreak[0]
  \\
  &= D(\ev)\circ \<\pi_1\times 0,
  \pi_2\times\id\>\circ(\id\times\pi_2)
  & \qquad & \textup{by \lemref{lemma-strength'-expr}}
  \displaybreak[0]
  \\
  & = D(\ev)\circ \<0\times\pi_1, \pi_2\times\pi_2\>
  & \qquad & \textup{by \eqref{eq:circ-pair}, functoriality
    of $\times$,}
  \\
  & & \qquad & \textup{and because $\C$ is left-additive},
  \\
  \displaybreak[0]
  \\
  D(\ev)\circ t\circ (\pi_2\times \id)
  & = D(\ev)\circ\<0\times\pi_1, \id\times\pi_2\>
  \circ(\pi_2\times\id)
  & \qquad & \textup{by def. of $t$}
  \displaybreak[0]
  \\
  & = D(\ev)\circ\<\pi_1\times 0, \pi_2\times\pi_2\>
  & \qquad & \textup{by \eqref{eq:circ-pair}, functoriality
    of $\times$,}
  \\
  & & \qquad & \textup{and because $\C$ is left-additive}.
\end{alignat*}
Therefore, by D2 and because $\times$ is additive, we have:
\begin{align*}
  \ev\circ(\pi_1\times\pi_2) + D(\ev)\circ t\circ
  (\pi_2\times \id) & = D(\ev)
  \circ\<0\times\pi_1, \pi_2\times\pi_2\> + D(\ev)
  \circ\<\pi_1\times 0, \pi_2\times\pi_2\>
  \\
  & = D(\ev) \circ \<\pi_1\times\pi_1, \pi_2\times\pi_2\>
  \\
  & = D(\ev) \circ \psi,
\end{align*}
proving \eqref{eq:D-ev-psi}.

\subsection{The enrichment of $T$}
\label{sec-enrichment}

\label{remark-underline-T}
We recall that the cartesian closed category $\C$ gives rise to a
category $\underline\C$ enriched in $\C$.  The objects $\underline\C$
are the objects of $\C$, and for each pair of object $X$ and $Y$ of
$\C$, $\underline\C(X,Y) = X\Rightarrow Y$.  The identity of an object
$X$ is the morphism $e_X=\curry(\ell^{-1}):\unito\to X\Rightarrow X$,
and the composition morphism $m_{X,Y,Z}: (X\Rightarrow Y)\times
(Y\Rightarrow Z)\to (X\Rightarrow Z)$ is given by $\curry
(\ev\circ(\id\times\ev)\circ a)$.  By \citep[Theorem~1.3]{MR0304456},
the functor $T:\C\to\C$, being equipped with the tensorial strength $t$, gives
rise to a $\C$-functor $\underline{T}:\underline\C\to\underline\C$
such that $\underline{T}X=TX$ and $\underline{T} =
\underline{T}_{X,Y}:(X\Rightarrow Y)\to (TX\Rightarrow TY)$ is given
by $\underline{T}=\curry(T(\ev)\circ t)$.  The definitions of $T$ and
$t$ imply that
\begin{equation}
  \label{eq:T-ev-t}
  T(\ev)\circ t = \<D(\ev)\circ t, \ev\circ(\id\times\pi_2)\>.
\end{equation}
Let us prove that the morphism $\underline{T}$ is linear. The proof
relies on the following criterion of linearity of curried morphisms.

\begin{lemma}\label{lemma-curry-linear}
  Let $f:Z\times X\to Y$ be a morphism in $\C$.  Then the currying
  $\curry(f):Z\to X\Rightarrow Y$ is a linear morphism if and only if
  $D(f)\circ t' = f \circ (\pi_1\times\id)$.
\end{lemma}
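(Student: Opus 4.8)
The plan is to unwind the definition of linearity and then translate both sides of the resulting identity through the currying bijection, reducing the statement to a routine application of the differential $\lambda$-category axiom. Recall that a morphism $g$ is linear precisely when $D(g)=g\circ\pi_1$; applied to $g=\curry(f):Z\to X\Rightarrow Y$, this says $D(\curry(f))=\curry(f)\circ\pi_1$. So the goal is to show that this identity holds if and only if $D(f)\circ t'=f\circ(\pi_1\times\id)$.

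First I would rewrite the left-hand side $D(\curry(f))$ using equation~\eqref{eq:D-curry-1}, which is exactly the differential $\lambda$-category axiom and is available since we now assume $\C$ is a differential $\lambda$-category; it gives $D(\curry(f))=\curry(D(f)\circ t')$. Here $t'=t'_{Z,X}:TZ\times X\to T(Z\times X)$, so $D(f)\circ t'$ is a morphism $TZ\times X=(Z\times Z)\times X\to Y$ whose currying lands in $Z\times Z\to X\Rightarrow Y$, matching the type of $D(\curry(f))$.

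Next I would rewrite the right-hand side $\curry(f)\circ\pi_1$ using equation~\eqref{eq:curry}, which yields $\curry(f)\circ\pi_1=\curry(f\circ(\pi_1\times\id))$. Thus the linearity condition $D(\curry(f))=\curry(f)\circ\pi_1$ becomes the equation $\curry(D(f)\circ t')=\curry(f\circ(\pi_1\times\id))$ between two morphisms $Z\times Z\to X\Rightarrow Y$.

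Finally, since $\curry$ is a bijection (being the inverse of $\uncurry$), this last equation holds if and only if its two arguments coincide, i.e. $D(f)\circ t'=f\circ(\pi_1\times\id)$, which is precisely the asserted condition. I do not expect any genuine obstacle here: the whole argument is a short chain of rewrites followed by an appeal to the injectivity of $\curry$. The only points requiring care are to invoke~\eqref{eq:D-curry-1} rather than attempting to compute $D(\curry(f))$ directly from the definition of $T$, and to confirm that the types of $t'$ and of the various composites line up as indicated above.
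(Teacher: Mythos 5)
Your proof is correct and takes essentially the same route as the paper's: both unwind linearity of $\curry(f)$ to the equation $D(\curry(f))=\curry(f)\circ\pi_1$, rewrite the left-hand side via~\eqref{eq:D-curry-1} as $\curry(D(f)\circ t')$, and then pass through the currying bijection. The only cosmetic difference is that the paper applies $\uncurry$ to both sides, whereas you rewrite the right-hand side as $\curry(f\circ(\pi_1\times\id))$ using~\eqref{eq:curry} and appeal to injectivity of $\curry$ --- the same step in mirror image.
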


\begin{proof}
  By definition, $\curry(f)$ is linear if and only if $D(\curry(f)) =
  \curry(f)\circ\pi_1$.  Applying $\uncurry$ to both sides of the
  equation we obtain an equivalent equation
  $\uncurry(D(\curry(f)))=\uncurry(\curry(f)\circ\pi_1)$.  By
  \eqref{eq:D-curry-1}, the left hand side is equal to
  $\uncurry(D(\curry(f))) = \uncurry(\curry(D(f)\circ t')) = D(f)\circ
  t'$, while the right hand side is equal to
  $\uncurry(\curry(f)\circ\pi_1) = \uncurry(\curry(f))\circ
  (\pi_1\times\id) = f\circ (\pi_1\times\id)$ by the definition of
  $\uncurry$, hence the assertion.
\end{proof}

\begin{theorem}\label{thm-und-T-linear}
  $\underline{T}$ is a linear morphism.
\end{theorem}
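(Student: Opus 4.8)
The plan is to apply the linearity criterion of \lemref{lemma-curry-linear}. Since $\underline{T}=\curry(T(\ev)\circ t)$, taking $f=T(\ev)\circ t:(X\Rightarrow Y)\times TX\to TY$ reduces the claim to the single equation
\[
D(T(\ev)\circ t)\circ t' = T(\ev)\circ t\circ(\pi_1\times\id),
\]
where $t'=t'_{X\Rightarrow Y,\,TX}$ and $\pi_1:T(X\Rightarrow Y)\to X\Rightarrow Y$. First I would rewrite $T(\ev)\circ t$ by \eqref{eq:T-ev-t} as the pairing $\<D(\ev)\circ t,\ \ev\circ(\id\times\pi_2)\>$. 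Because $D$ commutes with pairing (D4) and precomposition distributes over pairing by \eqref{eq:circ-pair}, both sides are pairings, so the equation is equivalent to two componentwise equations
\[
D(D(\ev)\circ t)\circ t' = D(\ev)\circ t\circ(\pi_1\times\id),\qquad
D(\ev\circ(\id\times\pi_2))\circ t' = \ev\circ(\id\times\pi_2)\circ(\pi_1\times\id);
\]
that is, it suffices to check separately that each component of $T(\ev)\circ t$ is linear in its first variable.

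The second equation I expect to be routine. Since $\id\times\pi_2$ is linear, D5 gives $D(\ev\circ(\id\times\pi_2))=D(\ev)\circ((\id\times\pi_2)\times(\id\times\pi_2))$, and composing with $t'$ and simplifying by \eqref{eq:times-circ-pair} and \lemref{lemma-strength'-expr} collapses the left-hand side to $D(\ev)\circ t'\circ(\id\times\pi_2)$ (where $t'$ now carries the index appearing in \eqref{eq:D-uncurry-2}). The differential $\lambda$-category axiom \eqref{eq:D-uncurry-2}, namely $\ev\circ(\pi_1\times\id)=D(\ev)\circ t'$, then turns this into $\ev\circ(\pi_1\times\id)\circ(\id\times\pi_2)=\ev\circ(\pi_1\times\pi_2)$, which is exactly the right-hand side after the same simplification. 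Only the defining property of a differential $\lambda$-category and functoriality of $\times$ enter here.

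The first equation is the crux, and the hard part will be precisely here. Using the chain rule D5 together with the linearity of $t$ (so $D(t)=t\circ\pi_1$), the left-hand side becomes $D(D(\ev))\circ(t\times t)\circ t'$, a genuinely second-order expression in $\ev$ that cannot be reduced by the product axioms alone. Concretely, I would first use the interchange property (\lemref{lemma-D-interchange}, equivalently \corref{cor-D-interchange}) to reshuffle the two inner arguments of $D(D(\ev))$ so that the resulting morphism factors as $(t'\times t')$ precomposed with an explicit combinator; then I would differentiate the identity \eqref{eq:D-uncurry-2} to obtain the second-order relation $D(D(\ev))\circ(t'\times t')=D(\ev)\circ((\pi_1\times\id)\times(\pi_1\times\id))$ (again via D5 and the linearity of $t'$ and $\pi_1\times\id$). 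Applying this relation drops the expression back to first order as $D(\ev)$ of an explicit morphism, which I would finally identify with $D(\ev)\circ t\circ(\pi_1\times\id)$ by a combinator computation.

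The main obstacle, as indicated, is this first component: the mixed second-order term $D(D(\ev))\circ(t\times t)\circ t'$ is not expressible through a single $D(\ev)$ in an arbitrary cartesian differential category, and collapsing it requires the differentiated form of the differential $\lambda$-axiom together with the symmetry of the second derivative. The bookkeeping of the various strengths ($t_{X\Rightarrow Y,X}$, $t'_{X\Rightarrow Y,X}$, and $t'_{X\Rightarrow Y,TX}$) and of the intermediate reindexing combinator is where the calculation is most error-prone; throughout I would keep the set-theoretic picture $f\mapsto T(f)$ as a guide, under which the statement is simply the internalization of the additivity of $T$ on homsets established in \lemref{lemma-T-additive-functor}.
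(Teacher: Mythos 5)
Your proposal is correct and follows essentially the same route as the paper's proof: reduce via \lemref{lemma-curry-linear}, split $T(\ev)\circ t$ into its two components, dispatch the second component by naturality of $t'$ and equation~\eqref{eq:D-uncurry-2}, and handle the first by differentiating \eqref{eq:D-uncurry-2} and invoking the symmetry of the second derivative to exchange the roles of $t$ and $t'$. The only cosmetic difference is that the paper routes the reshuffling step through \proref{prop-sigma-compat-strength} together with \corref{cor-D-interchange}, while you apply the interchange property of \lemref{lemma-D-interchange} directly to the explicit pairings --- the same content in slightly different packaging.
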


\begin{proof}
  $\underline{T}$ is the currying of the morphism $T(\ev)\circ t =
  \<D(\ev)\circ t, \ev\circ(\id\times\pi_2)\>$.  Let us check that the
  condition of \lemref{lemma-curry-linear} is satisfied.  By D4,
  D5, and \eqref{eq:circ-pair}, we have:
  \begin{equation}
    \label{eq:D-T-ev-t}
    D(T(\ev)\circ t)\circ t' = \<D(D(\ev))\circ T(t)\circ t',
    D(\ev)\circ T(\id\times\pi_2)\circ t'\>.
  \end{equation}
  Differentiating equation~\eqref{eq:D-uncurry-2}, we obtain
  $D(D(\ev)\circ t') = D(\ev\circ(\pi_1\times\id))$.  By D5, the left
  hand side is equal to $D(D(\ev))\circ T(t')$ and the right hand side
  is equal to $D(\ev)\circ T(\pi_1\times\id)$.  Precomposing both sides
  of the equation with $t$, we conclude that
  \begin{alignat*}{3}
    D(D(\ev))\circ T(t')\circ t
    &= D(\ev)\circ T(\pi_1\times\id)\circ t
    \\
    &= D(\ev)\circ t \circ (\pi_1\times T(\id))
    & \qquad & \textup{by naturality of $t$}
    \\
    &= D(\ev)\circ t\circ (\pi_1\times\id)
    & \qquad & \textup{by functoriality of $T$}.
  \end{alignat*}
  By \proref{prop-sigma-compat-strength}, $T(t')\circ t =
  \sigma\circ T(t)\circ t'$, therefore the left hand side of the above
  equation is equal to $D(D(\ev))\circ\sigma\circ T(t)\circ t'$, which
  is equal to $D(D(\ev))\circ T(t)\circ t'$ by
  \corref{cor-D-interchange}.  We conclude that
  \[
  D(D(\ev))\circ T(t)\circ t' = D(\ev)\circ t\circ (\pi_1\times\id).
  \]
  Furthermore:
  \begin{alignat*}{3}
    D(\ev)\circ T(\id\times\pi_2)\circ t'
    &= D(\ev)\circ t'\circ (\id\times\pi_2)
    & \qquad & \textup{by naturality of $t'$}
    \\
    &= \ev\circ(\pi_1\times\id)\circ(\id\times\pi_2)
    & \qquad & \textup{by \eqref{eq:D-uncurry-2}}
    \\
    &= \ev\circ(\id\times\pi_2)\circ(\pi_1\times\id)
    & \qquad & \textup{by functoriality of $\times$}.
  \end{alignat*}
  Plugging these expressions into \eqref{eq:D-T-ev-t}, we obtain
  \begin{alignat*}{3}
    D(T(\ev)\circ t)\circ t'
    &= \<D(\ev)\circ t\circ (\pi_1\times\id),
    \ev\circ(\id\times\pi_2)\circ(\pi_1\times\id)\>
    \\
    &= \<D(\ev)\circ t, \ev\circ(\id\times\pi_2)\> \circ
    (\pi_1\times\id)
    & \qquad & \textup{by \eqref{eq:circ-pair}}
    \\
    &= T(\ev)\circ t\circ (\pi_1\times\id)
    & \qquad & \textup{by \eqref{eq:T-ev-t}}.
  \end{alignat*}
  Applying \lemref{lemma-curry-linear}, we conclude that
  $\underline{T}$ is a linear morphism.
\end{proof}

\begin{proposition}\label{prop-curry-T-t}
  Let $f:Z\times X\to Y$ be a morphism in $\C$.  Then
  $\underline{T}\circ\curry(f) = \curry (T(f)\circ t)$.
\end{proposition}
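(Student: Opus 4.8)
The plan is to exploit that $\curry$ and $\uncurry$ are mutually inverse bijections, so proving the desired equality of morphisms $Z\to TX\Rightarrow TY$ is equivalent to proving the equality of their uncurryings. Since the right hand side is $\curry(T(f)\circ t)$, whose uncurrying is $T(f)\circ t$, it suffices to establish
\[
\uncurry(\underline{T}\circ\curry(f)) = T(f)\circ t.
\]

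First I would rewrite the left hand side. Using the definition of $\uncurry$ together with the functoriality of $\times$, one has $\uncurry(h\circ g)=\uncurry(h)\circ(g\times\id)$ for any composable $h$ and $g$; applying this with $h=\underline{T}$ and $g=\curry(f)$ gives
\[
\uncurry(\underline{T}\circ\curry(f)) = \uncurry(\underline{T})\circ(\curry(f)\times\id_{TX}).
\]
Because $\underline{T}=\curry(T(\ev)\circ t)$ by definition, its uncurrying is $\uncurry(\underline{T})=T(\ev)\circ t_{X\Rightarrow Y,X}$, so the expression becomes $T(\ev)\circ t_{X\Rightarrow Y,X}\circ(\curry(f)\times\id_{TX})$.

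The key step is to push $\curry(f)$ through the strength $t$ by naturality. Writing $\id_{TX}=T(\id_X)$, the naturality of $t$ (established above) applied to the pair $\curry(f):Z\to X\Rightarrow Y$ and $\id_X:X\to X$ yields
\[
t_{X\Rightarrow Y,X}\circ(\curry(f)\times\id_{TX}) = T(\curry(f)\times\id_X)\circ t_{Z,X}.
\]
Substituting this and using the functoriality of $T$, the left hand side becomes $T(\ev\circ(\curry(f)\times\id_X))\circ t_{Z,X}$. Since $\ev\circ(\curry(f)\times\id_X)=f$ by the defining property of $\curry$, the whole expression collapses to $T(f)\circ t_{Z,X}=T(f)\circ t$, which is exactly what we want. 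The only genuine obstacle is recognizing that $\id_{TX}$ must be read as $T(\id_X)$ so that the naturality square for $t$ can be invoked in the form proved earlier; once that is seen, the remaining manipulations amount to the routine bijectivity of currying, functoriality of $T$, and the universal property of $\ev$.
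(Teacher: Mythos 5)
Your proposal is correct and follows essentially the same route as the paper: both reduce the claim via $\uncurry$ to the identity $\uncurry(\underline{T}\circ\curry(f)) = T(f)\circ t$, compute $\uncurry(\underline{T}) = T(\ev)\circ t$ from the definition of $\underline{T}$, push $\curry(f)\times\id$ through the strength by naturality of $t$ (reading $\id_{TX}$ as $T(\id_X)$, exactly as the naturality square requires), and finish with functoriality of $T$ and $\ev\circ(\curry(f)\times\id)=f$. The only cosmetic difference is that you package the first steps as the rule $\uncurry(h\circ g)=\uncurry(h)\circ(g\times\id)$, whereas the paper unfolds the same computation via the definition of $\uncurry$ and functoriality of $\times$.
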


\begin{proof}
  Equivalently, $\uncurry(\underline{T}\circ\curry(f)) = T(f)\circ
  t$.  We have:
  \begin{alignat*}{3}
    \uncurry(\underline{T}\circ\curry(f))
    & = \ev\circ (\underline{T}\circ\curry(f)\times\id)
    & \qquad & \textup{by def. of $\uncurry$}
    \displaybreak[0]
    \\
    &= \ev\circ (\underline{T}\times\id)\circ (\curry(f)\times\id)
    & \qquad & \textup{by functoriality of $\times$}
    \displaybreak[0]
    \\
    &= T(\ev)\circ t\circ (\curry(f)\times\id)
    & \qquad & \textup{by def. of $\underline{T}$}
    \displaybreak[0]
    \\
    &= T(\ev) \circ T(\curry(f)\times\id)\circ t
    & \qquad & \textup{by naturality of $t$}
    \displaybreak[0]
    \\
    &= T(\ev\circ(\curry(f)\times\id))\circ t
    & \qquad & \textup{by functoriality of $T$}
    \displaybreak[0]
    \\
    &= T(f)\circ t
    & \qquad & \textup{by def. of $\curry$}.
  \end{alignat*}
  The proposition is proven.
\end{proof}

\section{Conclusions}

In this note we have introduced the notion of tangent bundle in any
cartesian differential category $\C$.  We have shown that the tangent
bundle functor $T$ is part of a strong commutative monad.  In
particular, when the category $\C$ is cartesian closed, the general
theory of strong monads has allowed us to conclude that the functor
$T$ is closed and admits an enrichment.  We have computed these
structures more explicitly when $\C$ is a differential
$\lambda$-category.

\bibliography{acm,mathscinet,tan-bun-in-diff-cats}
\bibliographystyle{plainnat}

\end{document}